\definecolor{pk}{rgb}{0.1,0.5,0.1}
\definecolor{kk}{rgb}{0.1,0.2,0.7}
\definecolor{tg}{rgb}{0.7,0.1,0.2}
\newcommand{\kk}[1]{{\textcolor{kk}{#1}}}
\newcommand{\gener}{{\cal L}}
\theoremstyle{plain}
 \newtheorem{thm}{Theorem}[section] 
\newtheorem{lem}[thm]{Lemma} \newtheorem{prop}[thm]{Proposition}
\newtheorem{cor}[thm]{Corollary}
 \theoremstyle{definition}
\newtheorem{definition}[thm]{Definition}
 \theoremstyle{remark}
\newtheorem*{rem*}{Remark} 
\newtheorem{remark}[thm]{Remark}
\numberwithin{equation}{section}
\newcommand{\nn}{\nonumber}
\newcommand{\R}{\mathbb{R}}
 \newcommand{\Rd}{{\R^{d}}}
\newcommand{\N}{\mathbb{N}}
\renewcommand{\H}{\mathbb{H}} 
\renewcommand{\leq}{\leqslant} \renewcommand{\le}{\leq}
\renewcommand{\geq}{\geqslant} \renewcommand{\ge}{\geq}
\DeclareMathOperator{\dist}{dist}
\DeclareMathOperator{\diam}{diam}
\DeclareMathOperator{\supp}{supp}   
 \def\({\left(} \def\){\right)} \def\[{\left[}
  \def\]{\right]} \def\<{\langle} \def\>{\rangle}
\newcommand{\E}{\mathbb{E}}
\newcommand{\p}{\mathbb{P}}
\newcommand{\wt}{\widetilde}
\newcommand{\wh}{\widehat}
\newcommand{\ol}{\overline}
\newcommand{\eps}{\varepsilon}
\newcommand{\WUSC}[3]{\textrm{WUSC}(#1,#2,#3)}
\newcommand{\WLSC}[3]{\textrm{WLSC}(#1,#2,#3)}
\newcommand{\lC}{{\underline{c}}}
\newcommand{\uC}{{\overline{C}}}
\newcommand{\la}{{\underline{\alpha}}}
\newcommand{\ua}{{\overline{\alpha}}}
\title{Estimates of Dirichlet heat kernel for symmetric Markov processes
\thanks{\emph{2010 MSC:} Primary 31B25, 60J50; Secondary 60J75, 60J35. \emph{Keywords:}  first exit time, Dirichlet heat kernel, heat kernel, Markov process, transition density, Green function, boundary Harnack inequality
}
\author{Tomasz Grzywny\thanks{The first author was partially supported by  National Science Centre (Poland): grant 2014/14/M/ST1/00600 and the Alexander von Humboldt Foundation},
Kyung-Youn Kim\thanks{The second author was partially supported by DAAD PaJaKo Programme and the German Research Foundation (DFG) through the International Research Training Group (IRTG) 2235.},
Panki Kim\thanks{This work was  supported by the National Research Foundation of 
Korea(NRF) grant funded by the Korea government(MSIP) (No. NRF-2015R1A4A1041675)} }
 }
\begin{document}
\maketitle
\begin{abstract}
We consider a large class of symmetric pure jump Markov processes dominated by isotropic unimodal L\'evy processes with weak scaling conditions. First, we establish sharp two-sided heat kernel estimates for these processes in  $C^{1,1}$ open sets.
As corollaries of our main results, we obtain sharp two-sided Green function estimates and a scale invariant boundary Harnack inequality with explicit decay rates in $C^{1,1}$ open sets.
\end{abstract}

\section{Introduction}

The study of the heat kernel of a semigroup  is a field of interactions between probability, analysis and geometry. 
The transition density function provides direct access to  the path properties of  a  Markov process. 
In addition, it is the fundamental solution(or heat kernel) of the heat equation with an infinitesimal generator of the corresponding process. 
The Dirichlet heat kernel describes an operator with zero exterior conditions. For instance, the Green function and the solutions to Cauchy and Poisson problems with Dirichlet conditions are expressed by the heat kernel. 
In this paper, we consider  a large class of symmetric pure jump Markov processes dominated by isotropic unimodal L\'evy processes with weak scaling conditions.
We estimate the transition density $p_D(t,x,y)$ of such Markov processes killed upon leaving an open set $D \subset \Rd$ with $C^{1,1}$ smoothness of the boundary.                    
In other words, we establish  sharp two--sided estimates of the Dirichlet heat kernel of the {integro-differential} operators with the maximum principle. Such operators
are commonly used to model nonlocal phenomena \cite{ MR2373320, MR2680400,  MR2583992, MR3034006,MR3116931, MR2042661}

The Dirichlet heat kernel estimates of the Laplacian (the Brownian motion) on the bounded $C^{1,1}$ domains were obtained  in \cite{Davies, Hui} (the upper bound) and
\cite{ MR1900329} (the lower bound). See \cite{Cho} for the Dirichlet heat kernel estimates for more general diffusions, and see \cite{MR1969798} for bounds of the Dirichlet heat
kernel of  the Laplacian on the bounded Lipschitz domain.

For the fractional Laplacian, in 2010, Chen et al \cite{MR2677618} gave sharp (two-sided) explicit estimates for the Dirichlet heat kernel $p_{D}(t, x, y)$ of the fractional Laplacian in  any $C^{1, 1}$ open set $D$ and over any finite time interval (see \cite{MR2722789}  for an extension to non-smooth open sets).
When $D$ is bounded, large--time Dirichlet heat kernel estimates can be deduced easily from short--time estimates using a spectral analysis.

The approach developed in \cite{MR2677618} provides a road map for establishing sharp two-sided Dirichlet heat kernel estimates of other discontinuous processes, and the result has been generalized to more general stochastic processes:
purely discontinuous symmetric L\'evy processes (\cite{MR3237737, BGR2013_3}), 
symmetric L\'evy processes with Gaussian component (\cite{CKS1}),
 symmetric non-L\'evy processes (\cite{MR2574732, KK}) and 
non-symmetric stable processes with gradient perturbation (\cite{MR3050510}). 

Let $\p_y(\tau_{D}>t)$ be the survival probability of the corresponding process and $p(t,x,y)=p_\Rd(t,x,y)$ be the (free) heat kernel for $D=\Rd$. 
Another form of  two-sided heat kernel estimates is the following factorization;
\begin{equation}\label{eq:ehks}
c_1  \p_x(\tau_{D}>t)
      \p_y(\tau_{D}>t)p(t,x,y) \le    {p_{D}(t, x, y)} \le c_2   \p_x(\tau_{D}>t)
      \p_y(\tau_{D}>t)p(t,x,y).
  \end{equation}
In fact, \eqref{eq:ehks} holds for more general sets such as the Lipschitz open set. See \cite{MR2722789, MR3237737}. See \cite{BGR1} for a direct approach to obtain sharp estimates of the survival probabilities of unimodal L\'evy processes.

Extensions of the result in \cite{MR2677618} were obtained for a quite large class of symmetric L\'evy processes, including general unimodal L\'evy processes with L\'evy densities satisfying weak scaling conditions, in \cite{MR3237737, BGR2013_3}.
However, the extension to symmetric Markov processes with jumping kernels satisfying similar weak scaling conditions is unknown.
 In this paper, we extend the results of \cite{BGR2013_3} and \cite{KK} to 
 more general processes that are non-isotropic and non-L\'{e}vy. Our results cover not only a large class of symmetric Markov processes with jumping kernels satisfying weak scaling conditions, but  also a large class of symmetric Markov processes with jumping kernels that decay exponentially with the damping exponent $\beta \in (0, \infty)$, and symmetric finite range Markov processes.
 
\medskip

For two non-negative functions $f$ and $g$, the notation $f\asymp g$ means that there are positive constants $c_1$ and $c_2$ such that $c_1g(x)\leq f (x)\leq c_2 g(x)$ in the common domain of the definition of $f$ and $g$.
We use the symbol ``$:=$,'' which is read as ``is defined to be.''
For $a, b\in \R$, $a\wedge b:=\min \{a, b\}$ and $a\vee b:=\max\{a, b\}$.
We use $dx$ to denote the Lebesgue measure in $\R^d$. For a Borel set $A\subset \R^d$, we use $|A|$ to denote its Lebesgue measure.

\medskip

For $0<\la\le \ua<2$, let $\phi$ be an increasing function on $[0, \infty)$ satisfying that there exist positive constants $\lC\le 1$ and $1\le \uC$ such that 
\begin{description}
\item[{\bf(WS)}]
$$\lC\left(\frac{R}{r}\right)^{\la}\le \frac{\phi(R)}{\phi(r)}\le \uC \left(\frac{R}{r}\right)^{\ua} \qquad \mbox{ for }\,\, 0<r\, \le  R. $$
\end{description}
Using this $\phi$,  we define
\begin{align}\label{d:nu}
\nu(r):=\frac1{\phi(r)r^d}\qquad \mbox{ for}\,\, r>0.
\end{align} 
Note that according to {\bf(WS)} and \eqref{d:nu}, there exists $c=c(\ua,  \uC, d)$ such that 
\begin{align}
\nu(r)\le c\, \nu(2r)\qquad\mbox{ for any }\,\,r>0.  \label{nu1}
\end{align}

A measure on $\Rd$ is called {\it isotropic unimodal}, if it is absolutely continuous on $\Rd\backslash\{0\}$
with a radial non-increasing density function and
a L\'evy process is called {\it isotropic unimodal}, if the {one-dimensional} distributions are unimodal. 
Since {\bf(WS)} implies 
$$\int_{\R^d}(1\wedge |x|^2)\nu(|x|)dx\le c \left(\int_0^{1}s^{-\ua+1} ds+ \int_{1}^{\infty} s^{-\la-1} \right)ds<\infty,$$
$\nu(dx):=\nu(|x|)dx$ is unimodal L\'{e}vy measure, and by Proposition in \cite{MR705619} there exists  a pure jump isotropic unimodal L\'{e}vy process $Z$  corresponding to $\nu$.

Throughout this paper, we assume that 
$\kappa:\Rd\times\Rd\to (0,\infty)$  is a symmetric measurable function and 
 there exists  ${L_0}>1$ such that 
\begin{align}\label{a:kappa}
L_0^{-1}\leq \kappa(x,y)\leq L_0,\quad
x,\,y\in\Rd.
\end{align}

Let $J:\Rd\times\Rd\to (0,\infty)$ be a symmetric measurable function, which is the jumping kernel of our process. We consider two sets of conditions on $J$. The first set is as follows:
\begin{description}
\item[{(J1)}] 
$ 
\mbox{\bf (J1.1)}\,\,  J(x, y) =  \kappa(x,y)   \nu(|x-y|)\mbox{ on } |x-y|\le 1,\\ 
\mbox{\bf (J1.2)}\,\, \sup_{x\in \R^d}\int_{|x-y|>1} J(x, y)dy<\infty, \\
\mbox{\bf (J1.3)}\,\, \mbox{ For any } M>0, \mbox{ there exists } C_M>1 \mbox{ such that } C_M^{-1} \nu(|x-y|) \le J(x, y)\le C_M \nu(|x-y|) \mbox{ for } |x-y|<M.
$
\end{description}
The constant $1$ in the condition {\bf (J1.1)} plays no special role, and it can be changed 
to any small positive real number.
Since {\bf (J1.3)} implies {\bf (J1.1)}, $J$ satisfies conditions {\bf (J1.2)} and {\bf (J1.3)} if and only if $J$ satisfies condition {\bf (J1)}.

For the second set of conditions on $J$, let 
 $\chi$ be a non-decreasing function on $(0,\infty)$ with $\chi(r)\equiv \chi(0)$, $r\in(0,1]$, and there exists $\gamma_1,\,\gamma_2,\,L_1\,L_2>0$ and $\beta \in[0,\infty]$ such
that
\begin{equation}\label{e:Exp}
 L_1e^{\gamma_1r^{\beta}}\leq \chi(r)\leq  L_2e^{\gamma_2r^{\beta}},\quad r>1.
\end{equation}
Then, the second condition on $J$ is as follows:
\begin{description}
\item[(J2)]
$  J(x, y) =  \kappa(x,y)   \nu(|x-y|)\chi(|x-y|)^{-1},\quad {x,y\in\Rd}$,
\end{description} which is equal to 
$$ \begin{cases}
 \kappa(x,y) \left(\phi(|x-y|)|x-y|^{d} \cdot \chi (|x-y|)\right)^{-1}  & \text{ if  } \beta \in [0, \infty),\\
  \kappa(x,y) \left(\phi(|x-y|) |x-y|^{d}\right)^{-1}  {\bf 1}_{\{|x-y| \le1\}} & \text{ if  } \beta = \infty.
  \end{cases}
$$
Clearly {\bf(J2)} implies {\bf(J1.1)} and {\bf(J1.2)}. Moreover,  if  {\bf(J2)} holds and $\beta\not= \infty$, then {\bf(J1)} holds.

\medskip

We consider the Dirichlet form $(\mathcal{E},\mathcal{F})$ associated with the jumping kernel $J$:
$$\mathcal{E}(u,v):=\frac{1}{2}\int\int(u(x)-u(y))(v(x)-v(y){)}J(x,y)dxdy,$$
and  
$\mathcal{F}:=\{u\in L^2(\Rd): \mathcal{E}(u,u)<\infty\}$.
Under conditions  {\bf (J1.1)} and {\bf (J1.2)}, according to \cite[Theorem 2.1]{MR2365348} and \cite[Theorem 2.4]{MR2888033},
$(\mathcal{E},\mathcal{F})$ is  a regular (symmetric) Dirichlet form on $L^2(\R^d,dx)$.  Moreover, the corresponding Hunt process $Y$ is conservative  and $Y$ has the H\"{o}lder continuous transition density $p(t,x,y)$ on $(0,\infty)\times \Rd\times\Rd$ (see \cite{MR2524930}).

\medskip

Now, we state the estimates of the transition density $p(t,x,y)$ of $Y$ with the jumping intensity kernel $J$ {satisfying} either the conditions {\bf (J1.2)} and {\bf (J1.3)}, or the condition {\bf(J2)}. 
The proof of the upper bound of Theorem \ref{T:n2.1} is almost the same as that of \cite[(2.6)]{MR3237737} using the condition {\bf (J1.3)} instead of that of \cite[(1.5)]{MR3237737}.
Thus, we skip the proof of the upper bound. The proof of the lower bound is given in Section \ref{sec:plbd}.

\begin{thm}\label{T:n2.1}
Suppose that $Y$ is a symmetric pure jump Hunt process whose jumping intensity kernel $J$ satisfies the conditions {\bf (J1.2)} and {\bf (J1.3)}. 
Then, for each $M>0$ and $T>0$, 
there is a positive constant $ C_{\ref{T:n2.1}}\ge 1$ that depends on $\phi, L_0, M$ and $T$ such that for every $(t, x, y)\in (0,T]\times \Rd\times\Rd$ with $|x-y|< M$,  the function $p(t, x, y)$ has the following estimates:
\begin{align*}
 C_{\ref{T:n2.1}}^{-1}\left([\phi^{-1}(t)]^{-d}\wedge  t\nu(|x-y|)\right)\,\le\,
p(t,x,y)\,\le\, 
C_{\ref{T:n2.1}}\,\left([\phi^{-1}(t)]^{-d}\wedge  t\nu(|x-y|)\right)
\end{align*}
where $\phi^{-1}(t)$ is the inverse function of $\phi(t)$.
\end{thm}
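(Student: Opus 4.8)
Since the upper bound is taken as known (its proof being essentially that of \cite[(2.6)]{MR3237737} with {\bf(J1.3)} in place of the global scaling), only the lower bound has to be proved, and I would split it into a \emph{near-diagonal} estimate (for $|x-y|\le c_0\phi^{-1}(t)$) and an \emph{off-diagonal, one-big-jump} estimate (for $|x-y|$ large). Everything happens at bounded scales: $t\le T$ forces $\phi^{-1}(t)\le\phi^{-1}(T)$, and $|x-y|<M$, so one only deals with balls of radius at most a fixed $M'$ and jumps of size at most $M'$, on which \eqref{a:kappa} and {\bf(J1.3)} give $J\asymp\nu$. The preliminaries I would assemble first are: (a) the on-diagonal upper bound $p(t,x,x)\le c[\phi^{-1}(t)]^{-d}$, the diagonal case of the upper bound already invoked; (b) the first-exit estimates $\E_x[\tau_{B(x,r)}]\asymp\phi(r)$, the tail bound $\p_x(\tau_{B(x,r)}\le t)\le c\,t/\phi(r)$, and the ensuing exponential decay of $\tau_{B(x,r)}$ in units of $\phi(r)$, uniformly in $x\in\Rd$ and $r\le M'$ — obtained by comparing $Y$ on $B(x,2r)$ with the isotropic unimodal L\'evy process $Z$ attached to $\nu$ (for which they are classical) via $J\asymp\nu$; and (c) the L\'evy system of $Y$, which is available since $(\mathcal E,\mathcal F)$ is a regular Dirichlet form with jumping kernel $J$.

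\textbf{Near-diagonal lower bound.} I would first obtain $p(t,x,x)\ge c[\phi^{-1}(t)]^{-d}$: with $r=a\phi^{-1}(t)$ and $a$ large, {\bf(WS)} and the tail bound in (b) give $\p_x(\tau_{B(x,r)}>t)\ge\tfrac12$, hence $\int_{B(x,r)}p(t,x,z)\,dz\ge\tfrac12$; Cauchy--Schwarz together with $\int_{\Rd}p(t,x,z)^2\,dz=p(2t,x,x)$ then yields $p(2t,x,x)\ge c\,|B(x,r)|^{-1}\asymp[\phi^{-1}(t)]^{-d}$, and the factor $2$ is harmless since $\phi^{-1}$ is increasing. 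To pass from $x=y$ to $|x-y|\le c_0\phi^{-1}(t)$ I would use the quantitative near-diagonal H\"older regularity of $p$ known for this class of processes (cf.\ \cite{MR2524930} and its method), $|p(t,x,z)-p(t,x,z')|\le c\,(|z-z'|/\phi^{-1}(t))^{\theta}[\phi^{-1}(t)]^{-d}$ for $|z-z'|\le\phi^{-1}(t)$; taking $c_0$ small gives $p(t,x,y)\ge\tfrac12 p(t,x,x)$, and a finite chain (via the semigroup property, integrating each factor over a small ball) extends this to $|x-y|\le R\,\phi^{-1}(t)$ for any fixed $R$, with a constant depending on $R$.

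\textbf{Off-diagonal lower bound.} For $\rho:=|x-y|\ge 2\phi^{-1}(t)$ I would write $p(2t,x,y)=\int p(t,x,z)p(t,z,y)\,dz\ge\big(\inf_{z\in B(y,2C\phi^{-1}(t))}p(t,z,y)\big)\,\p_x\big(Y_t\in B(y,2C\phi^{-1}(t))\big)$, bounding the infimum below by $c[\phi^{-1}(t)]^{-d}$ with the chained near-diagonal estimate. For the probability I would use the one-big-jump scenario with small radii $r_1=r_2=\eps\phi^{-1}(t)$: $Y$ exits $B(x,r_1)$ by a single jump landing in $B(y,r_2)$ before time $t/2$, and afterwards stays within $C\phi^{-1}(t)$ of the landing point up to time $t$. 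By the strong Markov property at that exit time, and using (b) with $C$ fixed large enough that $\p_w(\tau_{B(w,C\phi^{-1}(t))}>t)\ge\tfrac12$, this reduces to $\p_x\big(\tau_{B(x,r_1)}\le t/2,\ Y_{\tau_{B(x,r_1)}}\in B(y,r_2)\big)$, which the L\'evy system applied up to $(t/2)\wedge\tau_{B(x,r_1)}$ bounds below by $\big(\inf_{w\in B(x,r_1),\,u\in B(y,r_2)}J(w,u)\big)\,|B(y,r_2)|\,\E_x\big[(t/2)\wedge\tau_{B(x,r_1)}\big]$. Since all distances involved lie in $(0,M')$, {\bf(J1.3)} and \eqref{nu1} give $\inf J\ge c\,\nu(\rho)$; $|B(y,r_2)|\asymp[\phi^{-1}(t)]^d$; and by (b), $\E_x[(t/2)\wedge\tau_{B(x,r_1)}]\asymp t$. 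Combining, $\p_x(Y_t\in B(y,2C\phi^{-1}(t)))\ge c\,t\,\nu(\rho)[\phi^{-1}(t)]^d$, whence $p(2t,x,y)\ge c\,t\,\nu(\rho)$ and $p(t,x,y)\ge c'\,t\,\nu(\rho)$. The band $c_0\phi^{-1}(t)\le\rho<2\phi^{-1}(t)$ is covered by the near-diagonal estimate, since there $t\nu(\rho)\asymp[\phi^{-1}(t)]^{-d}$ by {\bf(WS)}. As $[\phi^{-1}(t)]^{-d}\wedge t\nu(\rho)$ equals $[\phi^{-1}(t)]^{-d}$ for $\rho\le\phi^{-1}(t)$ and $t\nu(\rho)$ for $\rho\ge\phi^{-1}(t)$, the crossover being exactly at $\rho=\phi^{-1}(t)$ because $s\mapsto\phi(s)s^d$ is strictly increasing, the two cases together give the asserted two-sided estimate for all $|x-y|<M$.

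\textbf{Main difficulty.} What distinguishes this from \cite{BGR2013_3, KK, MR3237737} is that $J$ is not assumed to satisfy a global scaling — only the local two-sided comparison {\bf(J1.3)} and the mild global bound {\bf(J1.2)}. Consequently the first-exit estimates, the L\'evy-system computations and the comparison with $Z$ must all be performed at bounded scales, where {\bf(J1.3)} restores $J\asymp\nu$; making the transfer of the exit-time estimates to $Y$ at these scales rigorous, and bookkeeping that every distance and radius occurring in the one-big-jump construction stays inside the range $(0,M')$ where {\bf(J1.3)} applies, is where the real care is needed.
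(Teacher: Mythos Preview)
Your proposal is correct and follows essentially the same route as the paper. The paper likewise skips the upper bound and derives the lower bound in Section~\ref{sec:plbd} by combining a survival-probability estimate (Lemma~\ref{L:plbd1}, obtained from \cite{MR2524930}) with a near-diagonal lower bound (Proposition~\ref{pl:st1}) and a one-big-jump L\'evy-system computation (Proposition~\ref{pl:st2}(1)), then takes $D=\Rd$ in Proposition~\ref{pl:st3}(1); your off-diagonal argument matches Proposition~\ref{pl:st2} almost verbatim, including the choice of a landing ball strictly inside $B(y,\,c\phi^{-1}(t))$ so that $|u-z|<|x-y|$ and {\bf(J1.3)} with \eqref{nu1} gives $\inf J\ge c\,\nu(|x-y|)$.

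The only noteworthy difference is your near-diagonal step: you go through the Cauchy--Schwarz identity $\int p(t,x,z)^2\,dz=p(2t,x,x)$ to get the on-diagonal lower bound and then invoke quantitative H\"older regularity to perturb to $|x-y|\le c_0\phi^{-1}(t)$, whereas the paper simply quotes \cite[Theorem~5.2]{MR2524930} and follows \cite[Proposition~3.3]{MR3237737}. Both are standard and interchangeable here, since \cite{MR2524930} supplies, under {\bf(J1.2)}--{\bf(J1.3)}, both the parabolic H\"older/Harnack input you use and the survival bound $\p_x(\tau_{B(x,a\phi^{-1}(t))}>t)\ge c$ the paper uses. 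Your observation that all radii and jump sizes stay below a fixed $M'$ (so {\bf(J1.3)} applies throughout) is exactly the bookkeeping the paper relies on as well.
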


For notational convenience, for each  $a, \gamma, T>0$, we define a function $F_{a, \gamma, T}(t, r)$ on $(0, T] \times [0, \infty)$ as
\begin{equation}\label{eq:qd}
F_{a, \gamma, T}(t, r):
=\begin{cases}
[\phi^{-1}(t)]^{-d}\wedge  t\nu(r)e^{-\gamma r^\beta}&\text{ if } \beta\in[0,1],\\
[\phi^{-1}(t)]^{-d}\wedge  t\nu(r)&\text{ if } \beta\in (1, \infty] \text{ with } r<1,\\
t \exp\left\{-a \left( r \, \left(\log \frac{ T r}{t}\right)^{\frac{\beta-1}{\beta}}\wedge  r^\beta\right) \right\}\qquad &\hbox{ if } \beta\in(1, \infty) \text{ with } r \ge 1,\\
\left(t/(T r)\right)^{ar}=\exp\left\{-ar\left(\log \frac{Tr}{t}\right)\right\} 
&\hbox{ if }  \beta=\infty \text{ with } r \ge 1.
\end{cases}
\end{equation}

\begin{thm}\label{T:2.1}
Suppose that $Y$ is a symmetric pure jump Hunt process whose jumping intensity kernel $J$ satisfies the condition {\bf (J2)}. Then, the process $Y$ has a continuous transition density function $p(t, x, y)$ on $(0, \infty)\times \R^d\times \R^d$.
For each $T>0$, there are positive constants $C_{\ref{T:2.1}}\ge 1$, $c_1$ and $c_2\ge 1$ that depend on $\phi,  L_0, \beta,\chi$ and $T$ such that for every $t \in (0,T]$ the function $p(t, x, y)$ has the following estimates:
\begin{align*}
 c_2^{-1}F_{c_1, \gamma_2, T}(t, |x-y|)\,\le\,
p(t,x,y)\,\le\, 
c_2\,F_{C_{\ref{T:2.1}}, \gamma_1, T}(t, |x-y|).
\end{align*}
\end{thm}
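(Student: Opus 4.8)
The plan is to reduce everything to the range $r:=|x-y|\ge1$ by means of Theorem \ref{T:n2.1} and the on-diagonal bound $p(t,x,y)\le p(t,x,x)^{1/2}p(t,y,y)^{1/2}\le c[\phi^{-1}(t)]^{-d}$. First, existence and joint continuity of $p(t,x,y)$ follow as in the discussion preceding Theorem \ref{T:n2.1}: \textbf{(J2)} gives $J\asymp\nu$ on $\{|x-y|\le1\}$ and, as noted in the text, \textbf{(J1.2)} holds, so $(\mathcal E,\mathcal F)$ is a regular Dirichlet form and the results quoted from \cite{MR2524930} apply. The near-diagonal range is then settled directly: when $\beta<\infty$, \textbf{(J2)} implies \textbf{(J1)} and Theorem \ref{T:n2.1} applies; when $\beta=\infty$, the same argument as in Theorem \ref{T:n2.1} (which for $|x-y|<1$ uses only $J\asymp\nu$ on $\{|x-y|\le1\}$ and \textbf{(J1.2)}; cf.\ also \cite{MR2888033}) gives the estimate. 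In both cases $p(t,x,y)\asymp[\phi^{-1}(t)]^{-d}\wedge t\nu(|x-y|)$ for $|x-y|<1$, $t\in(0,T]$, which agrees with $F_{a,\gamma,T}(t,|x-y|)$ there (for $\beta\le1$ because $e^{-\gamma|x-y|^{\beta}}\asymp1$ on $(0,1)$). So it remains to prove the two-sided bounds for $r\ge1$.

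For the upper bound with $r\ge1$, I would use Meyer's decomposition at a truncation level $\rhoo\in[1,r]$ chosen below: write $Y$ as the truncated process $Y^{(\rhoo)}$, with jumping kernel $J(x,y)\ind_{\{|x-y|\le\rhoo\}}$, together with its jumps of size $>\rhoo$, and split $p$ into the contribution in which no jump of size $>\rhoo$ occurs and the one in which at least one does. The first is dominated by the sub-Gaussian upper tail of $Y^{(\rhoo)}$, of the form $\p_x(\sup_{s\le t}|Y^{(\rhoo)}_s-x|\ge r/2)\le c\exp\!\big(-c\tfrac{r}{\rhoo}\log(1+\tfrac{r\phi(\rhoo)}{\rhoo t})\big)$ (as in \cite{MR3237737, KK}), times the on-diagonal factor $[\phi^{-1}(t)]^{-d}$; the second, via the L\'evy system and a Meyer-type iteration, is bounded by a sum over the number of ``big'' jumps, the dominant term being one jump of size $\asymp r$, which costs $\lesssim t\,\nu(r)e^{-\gamma_1 r^{\beta}}$, again up to the on-diagonal factor. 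I would then optimize over $\rhoo$: for $\beta\in[0,1]$ subadditivity of $s\mapsto s^{\beta}$ makes $\rhoo=r$ (a single big jump) best, giving $p\le c\,t\,\nu(r)e^{-\gamma_1 r^{\beta}}$; for $\beta\in(1,\infty)$ the choice $\rhoo\asymp(\log(Tr/t))^{1/\beta}\wedge r$ — which pushes the cheapest multi-jump path, consisting of $\asymp r(\log(Tr/t))^{-1/\beta}$ jumps of size $\asymp(\log(Tr/t))^{1/\beta}$, inside $Y^{(\rhoo)}$ — produces the exponent $r(\log(Tr/t))^{(\beta-1)/\beta}\wedge r^{\beta}$; for $\beta=\infty$ the big jumps disappear (as $\rhoo\ge1$) so only the first contribution survives and yields $(ct/r)^{cr}$. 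Intersecting with $[\phi^{-1}(t)]^{-d}$ gives $c_2 F_{C_{\ref{T:2.1}},\gamma_1,T}(t,r)$.

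For the lower bound with $r\ge1$, I would exhibit the cheapest ``strategy'' by which $Y$ can reach $B(y,\eps\phi^{-1}(t))$ from $x$ before time $t$ and bound $p$ below by it. Two ingredients suffice: a one-big-jump lower bound $p(t,x,y)\ge c\,t\,\nu(|x-y|)e^{-\gamma_2|x-y|^{\beta}}$ (valid for all $x,y$, $t\in(0,T]$, proved from the L\'evy system together with the near-diagonal lower bounds for $p^{B(x,2\eps\phi^{-1}(t))}$ and $p^{B(y,2\eps\phi^{-1}(t))}$ coming from Theorem \ref{T:n2.1}, and hence not circular), and Chapman--Kolmogorov $p(t,x,y)\ge\int\!\cdots\!\int\prod_{i=0}^{n-1}p(t/n,z_i,z_{i+1})\,dz_1\cdots dz_{n-1}$ along a chain $x=z_0,\dots,z_n=y$ with $|z_{i+1}-z_i|\asymp r/n$ and $z_i$ ranging over balls of radius $\asymp r/n$. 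Applying the one-jump bound to each factor and optimizing: for $\beta\in[0,1]$, $n=1$ gives $p\ge c\,t\,\nu(r)e^{-\gamma_2 r^{\beta}}$; for $\beta\in(1,\infty)$, $n\asymp r(\log(Tr/t))^{-1/\beta}\vee1$ gives $p\ge c\,t\,\exp\{-c(r(\log(Tr/t))^{(\beta-1)/\beta}\wedge r^{\beta})\}$, the two branches coming from $n\asymp r(\log(Tr/t))^{-1/\beta}$ and $n=1$; for $\beta=\infty$, $n=\lceil r\rceil$ jumps of size $\le1$ give $p\ge(ct/r)^{cr}$. Together with $p(t,x,y)\ge c[\phi^{-1}(t)]^{-d}$ when $r\lesssim\phi^{-1}(t)$, this reproduces $c_2^{-1}F_{c_1,\gamma_2,T}(t,r)$.

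The hard part will be the upper bound for $\beta\in(1,\infty)$ with $r\ge1$: carrying out the optimization over $\rhoo$ so that the competing ``many small jumps'' exponent $r(\log(Tr/t))^{(\beta-1)/\beta}$ and ``single big jump'' exponent $r^{\beta}$ emerge with the correct powers, and controlling the iteration and the sub-Gaussian tail estimate uniformly in $(t,r)$, in particular near the crossover $t\asymp Tr\,e^{-cr^{\beta-1}}$ where the two mechanisms change dominance. A more routine but still delicate task is to glue the near-diagonal bounds to the far-field ones and to keep track that all constants depend only on $\phi,L_0,\beta,\chi$ and $T$.
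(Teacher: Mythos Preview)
Your proposal is essentially correct and follows the same route as the paper. The paper does not give an independent proof of the upper bound: for $\beta\in(1,\infty]$ it simply invokes \cite[Theorems~1.2 and~1.4]{MR2806700}, and for $\beta\in[0,1]$ it invokes \cite[Theorem~2, Proposition~1]{KS}; the Meyer's decomposition with truncation at $\rhoo$ and optimization over $\rhoo$ that you sketch is precisely the machinery behind those cited results. For the lower bound the paper does exactly what you propose, carried out in Section~\ref{sec:plbd} with $D=\Rd$: the one-big-jump bound is Proposition~\ref{pl:st2}(2), the near-diagonal bound is Proposition~\ref{pl:st1}, and the chaining arguments for $\beta=\infty$ and $\beta\in(1,\infty)$ are Propositions~\ref{pl:st4} and~\ref{pl:st5} respectively, with the choice $k\asymp r(\log(Tr/t))^{-1/\beta}$ in \eqref{e:k} matching your $n$.
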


Theorem \ref{T:2.1} for the case $\beta\in(1, \infty]$ is basically derived from \cite[Theorems 1.2 and 1.4]{MR2806700}. Despite using $1$ instead of $T$, the proof is the same.
When $\beta \in [0,1]$, the upper bound in Theorem \ref{T:2.1} is derived from \cite[Theorem 2, Proposition 1]{KS}.
The lower bound  in Theorem \ref{T:2.1} is proved as a special case of the preliminary lower bound on the heat kernel of the killed process  in Section~\ref{sec:plbd}.

\medskip

For any open set $D\subset \R^d$, the {\it first exit time of $D$} by process $Y$ is defined by the formula 
$\tau_D:= \inf\{t>0: Y_t\notin D\}$
and we use $Y^D$ to denote the process obtained by killing process $Y$ upon exiting $D$.
The strong Markov property is used to easily verify that
$p_D(t, x, y):=p(t, x, y)-\E_x[ p(t-\tau_D, Y_{\tau_D}, y); t>\tau_D]$
is the transition density of $Y^D$.
Using the continuity and estimate of $p$, it is routine to show that $p_D(t, x, y)$ is symmetric and continuous (e.g., see the proof of Theorem~2.4 in~\cite{MR1329992}).

Let $D\subset \R^d$ (when $d\ge 2$)  be a $C^{1,1}$ open set, that is, there exists a localization radius $ R_0>0 $ and a constant $\Lambda>0$ such that for every $z\in\partial D$ there exists a $C^{1,1}$-function $\varphi=\varphi_z: \R^{d-1}\to \R$ satisfying $\varphi(0)=0$, $\nabla\varphi (0)=(0, \dots, 0)$, $\| \nabla\varphi \|_\infty \leq \Lambda$, $| \nabla \varphi(x)-\nabla \varphi(w)| \leq \Lambda |x-w|$ and an orthonormal coordinate system $CS_z$ of  $z=(z_1, \cdots, z_{d-1}, z_d):=(\wt z, \, z_d)$ with  an origin at $z$ such that $ D\cap B(z, R_0 )= \{y=({\tilde y}, y_d) \in B(0, R_0) \mbox{ in } CS_z: y_d > \varphi (\wt y) \}$. 
The pair $( R_0, \Lambda)$ is called the $C^{1,1}$ characteristics of the open set $D$.
Note that a $C^{1,1}$ open set $D$ with characteristics $(R_0, \Lambda)$ can be unbounded and disconnected, and the distance between two distinct components of $D$ is at least $R_0$.
By a $C^{1,1}$ open set  in $\R$ with a characteristic $R_0>0$, we mean an open set that can be written as the union of disjoint intervals so that the {infimum} of the lengths of all these intervals is {at least $R_0$} and the {infimum} of the distances between these intervals is  {at least $R_0$}.

 It is well known that if $D$ is $C^{1,1}$ open set with the characteristics $(R_0, \Lambda)$, then $D$ satisfies the interior and exterior ball conditions with the characteristic $R_1\le R_0$. That is, there exist balls $B_1,B_2\subset \Rd$ with radius $R_1$ such that $B_1\subset D\subset B_2^c$ satisfying $\delta_{B_1}(x)=\delta_{D}(x)=\delta_{B_2}(x)$ for any $x\in B_1$. Throughout this paper, without loss of generality, we always  assume that $R_0=R_1$.

To obtain the sharp estimates of the exit distributions for $Y$ (see Theorem \ref{T:exit}), we need additional conditions for the regularity of $\kappa$ and $\phi$.
\begin{description}
\item[(K$_\eta$)]
There are $L_3>0$ and $\eta> \ua/2$ such that $|\kappa(x,x+h)-\kappa(x,x)|\leq L_3|h|^{\eta}$ for every $x,\,h\in \Rd$, $|h|\leq 1$.
\end{description}
Note that the condition {\bf(K$_\eta$)} implies that 
\begin{align*}
|\kappa(x+h_1, x+h_2)-\kappa(x, x)|\le 2L_3(|h_1|^{\eta}+|h_2|^{\eta}), \qquad \mbox{for } |h_1|, |h_2|<1. 
\end{align*}
\begin{description}
\item[{\bf(SD)}]
$\phi\in C^1(0,\infty) $ and $ r\to -\nu'(r)/r$  is decreasing.
\end{description}
(See Remark \ref{r:sd}.)

Now, we state the following theorem, which is one of the main results of this paper.
Let $\delta_D(x)$ be the distance between $x$ and $D^c$, and let
\begin{align}\label{e:dax}
\Psi(t,x):=\left(1\wedge\sqrt{\frac{\phi(\delta_D(x))}{t}}\right).
\end{align}
\begin{thm}\label{t:nmain}
Suppose that 
$Y$ is a symmetric pure jump Hunt process whose jumping intensity kernel $J$ satisfies the conditions {\bf (J1)}, {\bf (SD)}, and {\bf (K$_\eta$)}. 
Suppose that  
$D$ is a bounded $C^{1,1}$ open set in $\R^d$ with characteristics $(R_0, \Lambda)$.
{Then, for each $T>0 $, there exists $c_1=c_1(\phi, L_0, L_3,  \eta, R_0, \Lambda,  T, d)$  and $c_2=c_2(\phi, L_0, L_3,  \eta, R_0, \Lambda,  T, d, \text{\rm diam}(D))>0$} such that the transition density $p_D(t,x,y)$ of $Y^D$ has the following estimates.
\begin{description}
\item{\rm (1)}
For any $(t, x, y)\in (0, T]\times D\times D$,
we have 
\begin{align*}
c_1^{-1} \Psi(t,x) \Psi(t,y)\, p(t, x, y)\le  p_D(t, x, y)\le c_1 \Psi(t,x) \Psi(t,y) \,p(t, x, y)
\end{align*}
\item{\rm (2)}
For any  $(t, x, y)\in [T, \infty)\times D\times D$, we have
\begin{align*}
{c_2^{-1} e^{- t\, \lambda^{D}}\, \sqrt{\phi(\delta_D(x))}\, \sqrt{\phi(\delta_D(y))}}
 \le p_D(t, x, y) \le c_2 e^{- t\, \lambda^{D}}\, \sqrt{\phi(\delta_D(x))}\, \sqrt{\phi(\delta_D(y))},
\end{align*}
where $-\lambda^D<0$ is the largest eigenvalue of the generator of $Y^D$.
\end{description}
\end{thm}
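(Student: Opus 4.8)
The plan is to follow the by-now standard two-step scheme for such Dirichlet heat kernel estimates (cf. \cite{MR2677618, BGR2013_3, KK}): first prove, for the short-time range $(0,T]$, the sharp factorization $p_D(t,x,y)\asymp\p_x(\tau_D>t)\,\p_y(\tau_D>t)\,p(t,x,y)$ on $D\times D$, together with the boundary identification $\p_x(\tau_D>t)\asymp\Psi(t,x)=1\wedge\sqrt{\phi(\delta_D(x))/t}$; then pass to large $t$ by spectral theory, using that $D$ is bounded. The external inputs are the free heat kernel estimates of Theorems \ref{T:n2.1} and \ref{T:2.1}, the preliminary interior lower bound on $p_D$ established in Section \ref{sec:plbd}, and the L\'evy system of $Y$. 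Granting the factorization and the survival estimate, part (1) is immediate, since then $\p_x(\tau_D>t)\,\p_y(\tau_D>t)\asymp\Psi(t,x)\,\Psi(t,y)$.

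For the \emph{upper bound} in the factorization I would first establish $p_D(t,x,y)\le c\,\Psi(t,x)\,p(t,x,y)$ on $(0,T]\times D\times D$. This is trivial from $p_D\le p$ when $t\le\phi(\delta_D(x))$ (then $\Psi(t,x)=1$). Otherwise one combines two bounds: first, the elementary estimate $p_D(t,x,y)=\E_x[p_D(\tfrac t2,Y_{t/2},y);\tau_D>\tfrac t2]\le\p_x(\tau_D>\tfrac t2)\sup_z p(\tfrac t2,z,y)\asymp\Psi(t,x)[\phi^{-1}(t)]^{-d}$ (from the Markov property, Theorems \ref{T:n2.1}/\ref{T:2.1} and the survival estimate), which together with $p_D\le p$ settles the range $|x-y|\lesssim\phi^{-1}(t)$; and second, a L\'evy-system bound for large $|x-y|$, recording the single jump from a neighbourhood of $x$ to one of $y$, which produces the factor $t\,\nu(|x-y|)$ (resp.\ the relevant value of $F_{c,\gamma,T}(t,|x-y|)$ when $\beta\neq0$). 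One then symmetrizes by Chapman--Kolmogorov,
\[
p_D(t,x,y)=\int_D p_D(\tfrac t2,x,z)\,p_D(\tfrac t2,z,y)\,dz\ \le\ c^2\,\Psi(\tfrac t2,x)\,\Psi(\tfrac t2,y)\int_D p(\tfrac t2,x,z)\,p(\tfrac t2,z,y)\,dz,
\]
the last integral being $p(t,x,y)$ and $\Psi(\tfrac t2,\cdot)\asymp\Psi(t,\cdot)$. For the \emph{lower bound} I would start from the near-diagonal interior estimate $p_D(t,x,y)\gtrsim[\phi^{-1}(t)]^{-d}$ from Section \ref{sec:plbd}, valid when $\delta_D(x)\wedge\delta_D(y)\ge\eps\,\phi^{-1}(t)$ and $|x-y|\le\eps^{-1}\phi^{-1}(t)$ by a chaining argument that stays inside $D$ (available with constants governed by $(R_0,\Lambda)$); pushing it toward $\partial D$ via an interior-ball exit and the lower survival estimate gives $p_D\gtrsim\Psi(t,x)\Psi(t,y)[\phi^{-1}(t)]^{-d}$ near the diagonal, while a single big jump near $x$ in the L\'evy system handles the off-diagonal regime, producing $t\,\nu(|x-y|)$, so that in all cases the lower bound matches $\Psi(t,x)\Psi(t,y)p(t,x,y)$.

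The crux --- and what is genuinely new relative to \cite{BGR2013_3, KK} --- is the survival estimate $\p_x(\tau_D>t)\asymp\Psi(t,x)$ for the non-isotropic, non-L\'evy process $Y$ on a merely $C^{1,\rho}$ open set with $\rho\in(\ua/2,1]$. I would obtain it from the construction of sub- and super-$\gener$-harmonic barriers near $\partial D$, where $\gener$ is the generator of $Y$: in the coordinate system $CS_z$ at $z\in\partial D$ one replaces $\delta_D$ by a $C^{1,\rho}$ regularized distance $r_D\asymp\delta_D$ and applies $\gener$ to functions of the form $x\mapsto\bigl(\phi(r_D(x))\bigr)^{1/2}$ times smooth cutoffs. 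For the model half-space with frozen coefficient this radial profile is $\gener$-harmonic up to a term of definite sign (the half-space survival estimate for unimodal L\'evy processes; cf.\ \cite{BGR1}), and the two remaining contributions to $\gener u$ --- one from the non-flatness of $\partial D$, controlled by the $C^{1,\rho}$ characteristics, the other from the oscillation of $\kappa$, controlled via \eqref{e:kecon} --- must be shown to be of strictly lower order near $\partial D$. This is precisely where the standing hypotheses of Theorem \ref{t:nmain} enter: $\rho>\ua/2$ and $\eta>\ua/2$ in {\bf(K$_\eta$)} are the thresholds that make these two perturbations negligible, while {\bf(SD)} (monotonicity of $r\mapsto-\nu'(r)/r$) is what renders $\gener$ applied to the radial profile explicitly signed. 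Converting the barriers into the two-sided bound on $\p_x(\tau_D>t)$ for every $t\le T$ additionally requires an optional-stopping argument and a dyadic iteration in $t$ using {\bf(WS)}. I expect this barrier analysis to be the main obstacle; the rest is bookkeeping with the L\'evy system and {\bf(WS)}.

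For part (2), since $D$ is bounded, $Y^D$ has discrete spectrum $0<\lambda^D=\lambda_1<\lambda_2\le\cdots$ with $L^2$-orthonormal eigenfunctions $\varphi_n$, $\varphi_1>0$, and $p_D(t,x,y)=\sum_{n\ge1}e^{-\lambda_n t}\varphi_n(x)\varphi_n(y)$. Part (1) at time $T$ gives $p_D(T,x,z)\asymp\sqrt{\phi(\delta_D(x))}\,\sqrt{\phi(\delta_D(z))}$ for $x,z\in D$ (here $\diam(D)<\infty$ forces both $p(T,x,z)\asymp1$ and $\Psi(T,x)\asymp\sqrt{\phi(\delta_D(x))}$); inserting this into $\varphi_1(x)=e^{\lambda^D T}\int_D p_D(T,x,z)\varphi_1(z)\,dz$ yields $\varphi_1(x)\asymp\sqrt{\phi(\delta_D(x))}$. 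The spectral gap $\lambda_2-\lambda_1>0$ and standard sup-norm bounds on the $\varphi_n$ then give $p_D(t,x,y)\asymp e^{-\lambda^D t}\varphi_1(x)\varphi_1(y)$ for all $t\ge t_0$, and the range $t\in[T,t_0]$ is covered by part (1) (whose proof applies on any compact time interval) together with $e^{-\lambda^D t}\asymp1$ there. Substituting $\varphi_1\asymp\sqrt{\phi(\delta_D(\cdot))}$ gives the estimates in part (2).
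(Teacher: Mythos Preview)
Your overall architecture---short-time factorization via survival estimates plus L\'evy-system/Chapman--Kolmogorov bookkeeping, then spectral theory for large time---is the paper's, and your symmetrization for the upper bound and interior-to-boundary push for the lower bound are exactly what Sections~\ref{sec:upbd}--\ref{sec:lbd} do (see Proposition~\ref{P:X} and Lemma~\ref{L:lbd3}). The divergence, and the place where your sketch has a real gap, is the barrier.

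You propose to test $\gener$ against $x\mapsto\sqrt{\phi(r_D(x))}$ with a regularized distance $r_D$, asserting that in the half-space with frozen coefficients this profile is ``$\gener$-harmonic up to a term of definite sign.'' That is not true in general: $y_d\mapsto\sqrt{\phi(y_d)}$ is \emph{not} harmonic on $(0,\infty)$ for the one-dimensional projection $Z^d$, so $\gener_Z$ applied to it does not vanish and you have no control on its sign. The paper instead uses the renewal function $V$ of the ascending ladder-height process of $Z^d$: the function $w(x)=V((x_d)^+)$ is \emph{exactly} $\gener_Z$-harmonic on $\H$ (Theorem~\ref{st3}), and one tests against $h_r(y)=V(\delta_D(y))\mathbf{1}_{D\cap B(z,r)}(y)$ without regularizing $\delta_D$ (the whole $h_r$ is mollified in the Dynkin step, see the proof of Theorem~\ref{T:exit}). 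Exact half-space harmonicity reduces the problem to bounding $\gener_Z(h_r-g_x)$, where $g_x(y)=V(y_d)$ in the local coordinate system at $z_x$, and this difference is handled by direct comparison of $\delta_D$ with $y_d$ in the $C^{1,\rho}$ chart (Proposition~\ref{g:st1}). Since $V(r)\asymp\sqrt{\phi(r)}$ by \eqref{e:com}, the output is the survival estimate you want---but obtained through $V$, not $\sqrt{\phi}$. Once you make this substitution, the rest of your outline (the $\rho>\ua/2$ and $\eta>\ua/2$ thresholds absorbing the geometry and $\kappa$-oscillation errors, respectively) goes through as you describe.

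One further correction: the role of \textbf{(SD)} is not to sign $\gener$ on the radial profile. It enters via \cite{TKMR} to give the regularity bounds $|V''(x)|\le C\,V'(x)/(x\wedge1)$ and $V'(x)\le C\,V(x)/(x\wedge1)$ (Proposition~\ref{VbisEst}), which are what make the integral estimates in Proposition~\ref{g:st1} converge.
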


\medskip

\begin{remark}\label{r:sd}
Conditions{ \bf(SD)} and {\bf(WS)} hold for a large class of  pure jump isotropic unimodal L\'{e}vy processes  including all  {subordinate} Brownian motions with weak scaling conditions (see \eqref{e:wecsb}):
let  $W=(W_t, \p_x)$ be a Brownian motion in $\R^d$ and $S=(S_t)$  {be} an independent driftless subordinator with Laplace exponent $\varphi_1$. 
The Laplace exponent   $\varphi_1$ is a  Bernstein function with $\varphi_1(0+)=0$.  Since $\varphi_1$ has no drift part,  $\varphi_1$ can be written in the form $$
\varphi_1(\lambda)=\int_0^{\infty}(1-e^{-\lambda t})\,
\mu(dt)\, .
$$
Here  $\mu$ is a $\sigma$-finite measure on
$(0,\infty)$ satisfying
$
\int_0^{\infty} (t\wedge 1)\, \mu(dt)< \infty.
$
 $\mu$ is called the L\'evy measure
of the subordinator $S$.

The subordinate Brownian motion $Z=(Z_t, \p_x)$ is defined by $Z_t=W_{S_t}$. 
The density of the  L\'evy measure of $Z$  with respect to the Lebesgue measure is given by $x \to \nu_d(|x|)$ with
$$
\nu_d(r)=\int^\infty_0 (4\pi t)^{-d/2}\exp\left(-\frac{r^2}{4t}\right) \mu(t)dt, \qquad r\neq 0. 
$$
Thus, $r \to \nu_{d} (r)$ is smooth for $r>0$, and
$$
-\frac{\nu_{d}'(r)}{r}=2 \pi \nu_{d+2}(r), \quad r>0
$$
which is decreasing. 
Suppose that 
\begin{align}
\label{e:wecsb}
\lC\left(\frac{R}{r}\right)^{\la/2}\le \frac{\varphi_1(R)}{\varphi_1(r)}\le \uC \left(\frac{R}{r}\right)^{\ua/2} \qquad \mbox{ for }\,\, 0<r\, \le  R. 
\end{align}
Then, by   \cite[Theorem 26]{2013arXiv1305.0976B}
$$\nu_d(r) \asymp r^{-d} \varphi_1(r^{-2}).$$
 Let $\wh \phi(r):= r^{-d} \nu_d(r)^{-1}$ (so that $\nu_d(r)=\wh \phi(r)^{-1}r^{-d}$). Then $\wh \phi$ is smooth, and since 
$\wh \phi(r) \asymp \varphi_1(r^{-2})^{-1}$, it satisfies  
{\bf(WS)}.
\end{remark}

When either $D$ is unbounded or $\beta=\infty$, we need precise information on $J$,  which is encoded in {\bf(J2)}, for large $|x-y|$. 
Moreover, when $\beta \in (1, \infty]$, we need to impose  an addition assumption for $D$ in order to obtain the sharp lower bound of $p_D(t,x,y)$;
We say that {\it the path distance in an open set $U$ is comparable to the Euclidean distance with characteristic $\lambda_1$} if for every $x$ and $y$ in  $U$ there is a rectifiable curve $l$ in $U$ that  connects $x$ to $y$ such that the length of $l$ is less than or equal to  $\lambda_1|x-y|$.
Clearly, such a property holds for all bounded $C^{1,1}$ connected open sets, $C^{1,1}$ connected open sets with compact complements, and connected open sets above graphs of $C^{1,1}$ functions.

\begin{thm}\label{t:main}
Suppose that $Y$ is a symmetric pure jump Hunt process whose jumping intensity kernel $J$ satisfies the conditions {\bf (J2)}, {\bf(SD)} and {\bf(K$_\eta$)}.
Suppose that $D$ is a $C^{1,1}$ open set in $\R^d$ with characteristics $( R_0, \Lambda)$.
Then, for each $T>0$, the transition density $p_D(t,x,y)$ of $Y^D$ has the following estimates.
\begin{description}
\item{\rm (1)}
There is a positive constant $c_1=c_1(\beta, \chi, \phi, L_0, L_3,  \eta, R_0, \Lambda,  T,  d)$  such that for all $(t, x, y)\in (0, T]\times D\times D$ we have 
\begin{align*}
 p_D(t, x, y)
\leq \,c_1 \, \Psi(t,x) \Psi(t,y)
\begin{cases}
F_{ C_{\ref{T:2.1}}\wedge\gamma_1, \gamma_1, T}(t, |x-y|/{6})&\mbox{ if } \beta\in[0,\infty),\\
F_{ C_{\ref{T:2.1}},  \gamma_1, T}(t, |x-y|/{4})&\mbox{ if } \beta=\infty,
\end{cases}
\end{align*}
where $C_{\ref{T:2.1}}$ is the constant in Theorem~\ref{T:2.1}.

\item{\rm (2)}
There is a positive constant $c_2=c_2( \beta, \chi, \phi, L_0, L_3,  \eta, R_0, \Lambda,  T, d)$  such that for all $t \in (0, T]$ we have
\begin{align*}
p_D(t, x, y)\ge \,c_2 \,&\Psi(t,x)\Psi(t,y) 
\begin{cases}
[\phi^{-1}(t)]^{-d}\wedge {te^{-\gamma_2 |x-y|^\beta}}\nu(|x-y|)&
\hbox{if } \beta\in[0,1],\\
[\phi^{-1}(t)]^{-d}\wedge{t}\nu(|x-y|)& \hskip -.1in
\begin{array}{c}
\hbox{if  $\beta\in(1, \infty)$ and $|x-y|<1$},\\
\hbox{or  $\beta=\infty$ and $|x-y| \le 4/5$}.
\end{array}
\end{cases}
\end{align*}
\item{\rm (3)}
Suppose, in addition, that the path distance in $D$ is comparable to the Euclidean distance with characteristic $\lambda_1$. 
Then, there are positive constants $c_i=c_i( \beta, \chi, \phi, L_0, L_3,  \eta, R_0, \Lambda,  T, d,  \lambda_1)$, $i=3,4$, such that if $x, y \in D$  and $t \in (0, T]$, we have
\begin{align*}
p_D(t, x, y) \ge \,
c_3 \,\Psi(t,x) \Psi(t,y)  
\begin{cases}
F_{ c_4,  \gamma_2, T}(t, |x-y|)&\hbox{if $\beta\in (1,\infty)$ and $|x-y|\ge 1$},\\
F_{ c_4,  \gamma_2, T}(t, 5|x-y|/4)&\hbox{if    $\beta=\infty$ and $|x-y| \ge 4/5$}.
\end{cases}
\end{align*}
\item{\rm (4)}
If $\beta\in (1, \infty)$, there is a positive constant $c_5=c_5(\beta, \chi, \phi, L_0, L_3,  \eta, R_0, \Lambda,  T, d)$  such that for every $ x, y$ in the different components of $D$ with $|x-y|\ge 1$ and $t \in (0, T]$ we have  
\begin{align*}
  p_D(t, x, y) \ge\, 
c_5\,   \Psi(t,x) \Psi(t,y){te^{-\gamma_2 (5|x-y|/4)^\beta}}\nu(|x-y|).
\end{align*}
\item{\rm (5)}
Suppose in addition that $\beta=\infty$ and $D$ is bounded and connected. 
Then the claim of Theorem \ref{t:nmain} (2) holds.
\end{description}
\end{thm}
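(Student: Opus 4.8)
The plan is to reproduce the argument behind Theorem~\ref{t:nmain}(2): once a sharp two-sided estimate of $p_D(s,\cdot,\cdot)$ is available for some fixed $s\in(0,T]$ and for \emph{all} pairs in $D\times D$, the large-time behaviour follows from the spectral theory of the killed semigroup $P^D_t f(x):=\int_D p_D(t,x,z)f(z)\,dz$ and two uses of Chapman--Kolmogorov. The point specific to $\beta=\infty$ is obtaining that short-time input for well-separated points: a bounded connected $C^{1,\rho}$ open set is quasiconvex, i.e.\ its path distance is comparable to the Euclidean distance, so Theorem~\ref{t:main}(3) applies. Combining Theorem~\ref{t:main}(2) (for $|x-y|\le 4/5$), (3) (for $|x-y|\ge 4/5$) and (1), and using that $\Psi(s,\cdot)\asymp\sqrt{\phi(\delta_D(\cdot))}$ on the bounded set $D$ while $F_{\cdot,\cdot,T}(s,\cdot)$ is bounded between two positive constants on $[0,\tfrac54\diam(D)]$, I obtain for each fixed $s\in(0,T]$ constants $0<a_s\le A_s$ (depending also on $\diam(D)$) with
\begin{equation}\label{e:stf}
a_s\,\sqrt{\phi(\delta_D(x))}\,\sqrt{\phi(\delta_D(y))}\ \le\ p_D(s,x,y)\ \le\ A_s\,\sqrt{\phi(\delta_D(x))}\,\sqrt{\phi(\delta_D(y))},\qquad x,y\in D,
\end{equation}
and, from Theorem~\ref{t:main}(1) and $\Psi(s,y)\le1$, the one-sided bound $p_D(s,x,y)\le A_s'\sqrt{\phi(\delta_D(x))}$ uniformly in $y\in D$.

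Next I would set up the spectral picture as in the bounded case. Since $D$ is bounded and $p_D(s,x,y)\le p(s,x,y)\le c$, the operator $P^D_s$ has an $L^2(D\times D)$ kernel, hence is Hilbert--Schmidt, self-adjoint and positivity preserving on $L^2(D)$; by the lower bound in \eqref{e:stf} the kernel is strictly positive, so the semigroup is irreducible and the Jentzsch--Krein--Rutman theorem gives that its spectral radius $e^{-\lambda^D s}$ is a simple eigenvalue with a strictly positive eigenfunction $\phi_1$ (normalized in $L^2(D)$, and in $L^\infty(D)$ by ultracontractivity), with $P^D_t\phi_1=e^{-\lambda^D t}\phi_1$ for all $t>0$. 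I would then establish the ground-state estimate
\begin{equation}\label{e:gst}
\phi_1(x)\asymp\sqrt{\phi(\delta_D(x))},\qquad x\in D,
\end{equation}
by writing $\phi_1(x)=e^{\lambda^D s}\int_D p_D(s,x,z)\phi_1(z)\,dz$ and inserting \eqref{e:stf} together with $0<\int_D\sqrt{\phi(\delta_D(z))}\,\phi_1(z)\,dz<\infty$ and $\phi_1\in L^\infty(D)$.

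For the upper bound, given $t\ge T$ put $s=T/3$, so $t-2s\ge T/3>0$, and write $p_D(t,x,y)=\int_D\int_D p_D(s,x,z)\,p_D(t-2s,z,w)\,p_D(s,w,y)\,dz\,dw$. Bounding the two outer factors by $A_s'\sqrt{\phi(\delta_D(x))}$ and $A_s'\sqrt{\phi(\delta_D(y))}$ and using $\int_D\int_D p_D(r,z,w)\,dz\,dw=\|P^D_{r/2}\ind_D\|_{L^2(D)}^2\le e^{-\lambda^D r}|D|$ gives $p_D(t,x,y)\le (A_s')^2|D|\,e^{2\lambda^D T/3}\,e^{-\lambda^D t}\sqrt{\phi(\delta_D(x))}\sqrt{\phi(\delta_D(y))}$. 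For the lower bound, given $t\ge T$ put $s_0=T/2$, so $t-s_0\ge T/2>0$; by \eqref{e:stf} and \eqref{e:gst}, $p_D(s_0,z,y)\ge a_{s_0}\sqrt{\phi(\delta_D(z))}\sqrt{\phi(\delta_D(y))}\ge c\,\phi_1(z)\sqrt{\phi(\delta_D(y))}$, hence
\begin{align*}
p_D(t,x,y)&=\int_D p_D(t-s_0,x,z)\,p_D(s_0,z,y)\,dz\\
&\ge c\sqrt{\phi(\delta_D(y))}\int_D p_D(t-s_0,x,z)\,\phi_1(z)\,dz = c\,e^{-\lambda^D(t-s_0)}\,\phi_1(x)\,\sqrt{\phi(\delta_D(y))},
\end{align*}
and \eqref{e:gst} turns $\phi_1(x)$ into $\sqrt{\phi(\delta_D(x))}$, yielding the matching lower bound; this is the assertion of Theorem~\ref{t:nmain}(2).

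The main obstacle is the short-time input \eqref{e:stf}: in the finite-range regime $\beta=\infty$ a two-sided estimate between well-separated points of $D$ is not automatic, and it is precisely here that boundedness, connectedness and $C^{1,\rho}$-regularity of $D$ (through quasiconvexity) are used, via Theorem~\ref{t:main}(3). Everything else — compactness and irreducibility of $P^D_s$, simplicity of the top eigenvalue, the ground-state estimate \eqref{e:gst}, and the two Chapman--Kolmogorov sandwiches — is the standard bounded-domain argument and requires only bookkeeping of the constants (which depend in addition on $\beta,\chi$ and $\diam(D)$, as in Theorem~\ref{t:nmain}(2)).
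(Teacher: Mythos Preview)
Your argument is correct and is precisely the standard spectral/intrinsic-ultracontractivity route that the paper invokes (by reference to \cite[Theorem~1.5(iii)]{MR3237737} and \cite[Theorem~1.5(iii)]{MR2923420}) rather than writes out. The only cosmetic difference is the choice of short-time input: the paper's cited argument feeds in the survival-probability bounds of Lemmas~\ref{L:exit2} and~\ref{L:lbd2} directly to obtain intrinsic ultracontractivity and the ground-state estimate, whereas you feed in the already-proved short-time two-sided estimates of Theorem~\ref{t:main}(1)--(3); either way one arrives at \eqref{e:stf} and \eqref{e:gst}, and from there the two Chapman--Kolmogorov sandwiches are identical. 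Your observation that connectedness plus $C^{1,\rho}$-regularity of the bounded set yields quasiconvexity (so that part~(3) is available for $|x-y|\ge 4/5$ in the finite-range case) is exactly the reason the paper imposes connectedness in part~(5).
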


Recall that the Green function $G_D(x, y)$ of $Y$ on $D$ is defined as  
$G_D(x, y)
=\int_0^\infty p_D(t, x, y)dt$.
As an application of Theorem \ref{t:nmain} and \ref{t:main}, we derive the sharp two sided estimate on the Green function $G_D(x, y)$ of $Y$ on 
 bounded $C^{1, 1}$ open sets.
For notational convenience, let
\begin{align}
a(x, y):=\sqrt{\phi(\delta_D(x))}\sqrt{\phi(\delta_D(y))}\label{e:axy}
\end{align}
 and
\begin{align*}
g(x,y):=
\begin{cases}
\displaystyle \frac{\phi(|x-y|)}{|x-y|^{d}}\left(1\wedge \frac{\phi(\delta_D(x))}{\phi(|x-y|)}\right)^{1/2}\left(1\wedge \frac{\phi(\delta_D(y))}{\phi(|x-y|)}\right)^{1/2}\qquad&\mbox{ when } d\ge 2,\\
\displaystyle \frac{a(x, y)}{|x-y|}\wedge 
\left(\frac{a(x, y)}{\phi^{-1}(a(x, y))}
+\left(\int_{|x-y|}^{\phi^{-1}(a(x, y))} \frac{\phi(s)}{s^2}ds\right)^{+}\right)\qquad&\mbox{ when } d=1
\end{cases}
\end{align*}
where  $x^+:=x\vee 0$.

\begin{thm}\label{C:green}
Suppose that $D$ is a bounded $C^{1,1}$ open set in $\R^d$ with characteristics $( R_0, \Lambda)$. 
Let $Y$ be a symmetric pure jump Hunt process whose jumping intensity kernel $J$  satisfies {\bf(K$_\eta$)}  and {\bf(SD)}. 
Suppose either 
(1)
the jumping intensity kernel $J$ satisfies the condition {\bf(J1)}, or  
(2) $D$ is connected and the jumping intensity kernel $J$ satisfies the condition {\bf (J2)} with $\beta=\infty$.
Then for every $(x, y)\in D\times D$, we have $  G_D(x, y)\asymp g(x, y).$
\end{thm}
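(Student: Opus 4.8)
The plan is to integrate the sharp Dirichlet heat kernel bounds of Theorems~\ref{t:nmain} and~\ref{t:main} in time, using $G_D(x,y)=\int_0^\infty p_D(t,x,y)\,dt$. Fix $x,y\in D$, assume without loss of generality $\delta_D(x)\le\delta_D(y)$, put $T:=\phi(\diam(D))$ (finite, since $D$ is bounded), and split $G_D(x,y)=\int_0^T p_D(t,x,y)\,dt+\int_T^\infty p_D(t,x,y)\,dt$. All constants below may depend on $\diam(D)$.

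\emph{The tail $\int_T^\infty$.} Here I use the large-time estimate $p_D(t,x,y)\asymp e^{-t\lambda^D}\sqrt{\phi(\delta_D(x))\phi(\delta_D(y))}$, which is Theorem~\ref{t:nmain}(2) under hypothesis~(1) and Theorem~\ref{t:main}(5) under hypothesis~(2); the latter is exactly the place where connectedness of $D$ is needed (for $\beta=\infty$ the process cannot jump across a gap). Since $\int_T^\infty e^{-t\lambda^D}\,dt$ is a finite constant, $\int_T^\infty p_D(t,x,y)\,dt\asymp\sqrt{\phi(\delta_D(x))\phi(\delta_D(y))}$. A short computation with {\bf(WS)} and $\delta_D(\cdot)\le\diam(D)$ shows $\sqrt{\phi(\delta_D(x))\phi(\delta_D(y))}\le c\,g(x,y)$ for all $x,y\in D$, with equality of orders when $|x-y|\asymp\diam(D)$; hence the tail never overshoots $g(x,y)$, and I will recover the lower bound for $g(x,y)$ entirely from $\int_0^T$.

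\emph{The bulk $\int_0^T$.} On $(0,T]$, under hypothesis~(1) Theorem~\ref{t:nmain}(1) gives $p_D(t,x,y)\asymp\Psi(t,x)\Psi(t,y)\,p(t,x,y)$, and Theorem~\ref{T:n2.1} (with any $M>\diam(D)$) gives $p(t,x,y)\asymp[\phi^{-1}(t)]^{-d}\wedge t\nu(|x-y|)$; under hypothesis~(2) the same holds whenever $|x-y|$ is bounded by a fixed constant (absorbing the dilations in the argument of $F$ via $\nu(cr)\asymp\nu(r)$, which follows from~\eqref{nu1} and the monotonicity of $\nu$), while for larger $|x-y|$ Theorem~\ref{t:main}(1),(3) give only a superexponentially small two-sided bound, which after integration is again of order $\sqrt{\phi(\delta_D(x))\phi(\delta_D(y))}$, consistent with the tail. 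It remains to show that $I:=\int_0^T\Psi(t,x)\Psi(t,y)\big([\phi^{-1}(t)]^{-d}\wedge t\nu(|x-y|)\big)\,dt\asymp g(x,y)$. Substituting $t=\phi(s)$ (legitimate since $\phi\in C^1$ by {\bf(SD)}) and splitting the $s$-range at $|x-y|$, $\delta_D(x)$, $\delta_D(y)$ and $\phi^{-1}(a(x,y))$, I evaluate the power-type pieces with {\bf(WS)} and the remaining ones with {\bf(SD)} (which controls $\phi'$, letting one trade $s^{-d}\phi'(s)$ for $\phi(s)/s^{d+1}$). For $d\ge2$, $\int_{|x-y|}^\infty s^{\ua-d-1}\,ds$ converges since $\ua<2\le d$, and $I$ reduces to $\tfrac{\phi(|x-y|)}{|x-y|^d}$ times the two factors $\big(1\wedge\tfrac{\phi(\delta_D(\cdot))}{\phi(|x-y|)}\big)^{1/2}$, i.e. the $d\ge2$ branch of $g$. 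For $d=1$ that integral may diverge; it is cut at the scale $\phi^{-1}(a(x,y))$ by the decay of $\Psi(t,x)\Psi(t,y)$, producing $\tfrac{a(x,y)}{\phi^{-1}(a(x,y))}+\big(\int_{|x-y|}^{\phi^{-1}(a(x,y))}\phi(s)/s^2\,ds\big)^{+}$, while the competing term $\tfrac{a(x,y)}{|x-y|}$ becomes the minimum precisely when $|x-y|\ge\phi^{-1}(a(x,y))$ (so that $\Psi(t,x)\Psi(t,y)$ already decays on the scale $\phi(|x-y|)$). For the lower bound on $g(x,y)$ one restricts $I$ to one sub-interval — around $t\asymp\phi(|x-y|)$ if $|x-y|\lesssim\delta_D(y)$, around $t\asymp T$ otherwise — which already carries the full order of $g(x,y)$.

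\emph{Main obstacle.} The decisive work is the case analysis in the last step: partitioning by the relative sizes of $|x-y|$, $\delta_D(x)$, $\delta_D(y)$, $\diam(D)$ and checking that in each regime $I$ (together with the tail contribution) reproduces the matching branch of $g$ — in particular reconciling the genuinely different $d=1$ formula, which encodes a truncated integral, with the uniform computation. A secondary difficulty, under hypothesis~(2), is verifying that the superexponential short-time decay for $|x-y|\gtrsim1$ is dominated by, and for the lower bound comparable to, the eigenfunction tail; this is again where connectedness of $D$ is essential, both for positivity of $\lambda^D$ and — via the path-distance comparability of bounded connected $C^{1,\rho}$ sets — for the applicability of Theorem~\ref{t:main}(3).
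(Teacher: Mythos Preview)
Your approach is essentially the same as the paper's: integrate the sharp two-sided heat kernel estimates of Theorems~\ref{t:nmain} and~\ref{t:main} over time. The paper's own proof is extremely terse, deferring almost all of the computation to \cite[Section~7]{MR3237737}; its only new content is the $d=1$ case, where it packages the short-time integral into an auxiliary function $K_T(a,r)$ (their display after \eqref{n:Vbis}) and then shows $K_T(a,r)\asymp g$ by a change of variables using $V$ and Proposition~\ref{VbisEst}. Your more explicit route via the substitution $t=\phi(s)$ and a case split on $|x-y|$, $\delta_D(x)$, $\delta_D(y)$, $\phi^{-1}(a(x,y))$ is the same computation unpacked, and your identification of the role of connectedness (for Theorem~\ref{t:main}(5) and (3) when $\beta=\infty$) is correct.

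One point to tighten: your claim that for the lower bound ``one restricts $I$ to one sub-interval \dots\ which already carries the full order of $g(x,y)$'' is too optimistic in the $d=1$ regime where the term $\int_{|x-y|}^{\phi^{-1}(a(x,y))}\phi(s)s^{-2}\,ds$ dominates. That term is genuinely an integral over a range of scales and cannot be recovered from a single dyadic block; you need to integrate over the whole window $t\in[\phi(|x-y|),a(x,y)]$ (equivalently $s\in[|x-y|,\phi^{-1}(a(x,y))]$), exactly as the paper does when proving \eqref{n:hT} in the case $\phi(r)\le a$. Apart from this, your sketch is sound.
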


\begin{remark}
When $d=1$,
 if either $\la<1$ or $\ua>1$, one can write the Green function estimates in simpler forms. (see, \cite[Corollary 7.4 and Remark 7.5]{MR3237737})
\end{remark}

In addition, we obtain the {\it uniform and scale-invariant} boundary Harnack inequality with {\it  explicit decay rates} in $C^{1,1}$ open sets as an application of Theorems \ref{t:main} and \ref{C:green}(1). 
A function $f:\R^d \to \R$ is said to be {\it harmonic} in the open set $D$ with respect to $Y$  if for every open set $U\subset D$ whose closure is a compact subset of $D$, 
{$\E_x[|f|(Y_{\tau_U})]<\infty$ for every  $x\in U$ and 
\begin{equation}\label{eq:harmDef}f(x)=\E_x[f(Y_{\tau_U})]\qquad \mbox{ for every } x\in U.\end{equation}
It is  said  that $f$ is regular harmonic in $D$ with respect to $Y$ if   $f$ is harmonic in $D$ with respect to $Y$ and \eqref{eq:harmDef} holds for $U=D$.}

\medskip

The next condition guarantees that $C_c^2(\R^d)$ is in the domain of the Feller generator. 
\begin{itemize}
\item[{\bf (L)}]
$Y$ is Feller and there exists a function $q(r)$ such that $J(x, y)\le q(|x-y|)$ and
\begin{align}
\label{e:Rhqdh}
\lim_{R\to \infty}\int_{|h|>R} q(|h|)dh  =0.
\end{align}
\end{itemize}
Note that if {\bf(J2)} holds, then clearly \eqref{e:Rhqdh} holds, and $Y$ is  Feller based on Theorem \ref{T:2.1}. Thus, condition {\bf (L)} is weaker than  condition {\bf(J2)}.

The next condition on $J$ is necessary for the boundary Harnack inequality to hold (see \cite[Assumption C and Example 5.14]{MR3271268}).
\begin{itemize}
\item[{\bf (C)}]
For any  $0<r<R \le 2$ there exists $C^*=C^*(\phi, d, r/R)$ such that for any  $x_0 \in \Rd$,  $x\in B(x_0, r)$ and $y\in B(x_0, R)^c$,
$(C^*)^{-1}J(x_0, y)\le J(x, y)\le C^* J(x_0, y).$
\end{itemize}
Note that when $\beta\in [0,1]$, condition {\bf(J2)} implies {\bf (C)}. 
In contrast, when $\beta\in (1, \infty]$, the boundary Harnack inequality does not hold under condition {\bf(J2)}.

\begin{thm}\label{C:BHI}
Suppose $D$ is a $C^{1,1}$ open set in $\R^d$ with characteristics $( R_0, \Lambda)$. 
Let  $Y$ be a symmetric pure jump Hunt process whose jumping intensity kernel $J$ satisfies conditions {\bf(J1)}, {\bf (L)}, {\bf (C)}, {\bf(K$_\eta$)}, and {\bf(SD)}.
Then,  there exists $c=c( \phi,  L_0, L_3, \eta, \Lambda, d)$ such that
for any $0<r<R_0\wedge 1$, $z\in \partial D$ and any non-negative function $f$ in $\R^d$ that is {regular} harmonic in $D\cap B(z, r)$ with respect to $Y$, and vanishes in $D^c\cap B(z, r)$, we have 
\begin{align*}
\frac{f(x)}{f(y)}\le c \sqrt{\frac{\phi(\delta_D(x))}{\phi(\delta_D(y))}}\qquad \mbox{ for any}\,\,\, x, y\in D\cap B(z, r/2).
\end{align*}
\end{thm}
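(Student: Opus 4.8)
The plan is to deduce the boundary Harnack inequality from the two-sided Dirichlet heat kernel estimates of Theorem \ref{t:main}(1)--(2) together with the general machinery that relates regular harmonic functions vanishing near a boundary portion to the Green function and the Poisson kernel. First I would record the precise small-time estimate for $p_D(t,x,y)$ near a boundary point: since $D$ is $C^{1,\rho}$ with $\rho\in(\ua/2,1]$ and $J$ satisfies {\bf(J1)}, {\bf(SD)}, {\bf(K$_\eta$)}, Theorem \ref{t:main}(1)--(2) gives, for $x,y$ in a small ball $B(z,r)$ with $z\in\partial D$,
\begin{align*}
p_{D\cap B(z,r)}(t,x,y)\asymp \Psi_{B(z,r)}(t,x)\,\Psi_{B(z,r)}(t,y)\,p(t,x,y),
\end{align*}
where $\Psi_U(t,w)=1\wedge\sqrt{\phi(\delta_U(w))/t}$, after first checking that the restriction of $J$ to $B(z,r)$ still satisfies the hypotheses of Theorem \ref{t:main} (here one uses that $D\cap B(z,r)$ inherits a $C^{1,\rho}$ structure with characteristics controlled by $(R_0,\Lambda)$, possibly after shrinking $r$). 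Integrating in $t$ over $(0,\infty)$ yields the Green function estimate $G_{D\cap B(z,r)}(x,y)\asymp g_{D\cap B(z,r)}(x,y)$ as in Theorem \ref{C:green}, with the factorized form carrying the crucial boundary decay $\bigl(1\wedge \phi(\delta_D(x))/\phi(|x-y|)\bigr)^{1/2}$.

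Next I would use the Ikeda--Watanabe formula: for a nonnegative $f$ that is regular harmonic in $U:=D\cap B(z,r)$ and vanishes on $D^c\cap B(z,r)$, one has
\begin{align*}
f(x)=\E_x[f(Y_{\tau_U})]=\int_{U^c}\!\! f(w)\,\Big(\int_U G_U(x,v)J(v,w)\,dv\Big)\,dw,
\end{align*}
and since $f$ vanishes on $D^c\cap B(z,r)$, the $w$-integral is effectively over $D\setminus B(z,r)$, where $|v-w|$ is bounded below. Condition {\bf(C)} then lets me replace $J(v,w)$ by $J(x_0,w)$ up to constants for $v$ near the relevant piece of $U$ — this is exactly the role {\bf(C)} plays — so that
\begin{align*}
f(x)\asymp \int_{D\setminus B(z,r)} f(w)\Big(\int_{U} G_U(x,v)\,J(x_0,w)\,dv\Big)dw
= \Big(\int_U G_U(x,v)\,K(v)\,dv\Big)
\end{align*}
for a kernel $K$ independent of $x$ (absorbing the $w$-integral). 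Using {\bf(L)} (which guarantees $C_c^2(\R^d)$ is in the Feller generator, hence the usual potential-theoretic identities and the finiteness needed in the Ikeda--Watanabe representation) and the Green estimate, $\int_U G_U(x,v)K(v)\,dv \asymp \sqrt{\phi(\delta_D(x))}\cdot h(x)$ where the remaining $x$-dependence beyond the $\sqrt{\phi(\delta_D(x))}$ factor is bounded above and below by positive constants uniformly over $x\in D\cap B(z,r/2)$ — here one exploits that for such $x$, $\delta_D(x)\le r$ while $|x-v|$ ranges over scales comparable to $r$ when $v$ is in the support region of $K$, so the factor $(1\wedge \phi(\delta_D(x))/\phi(|x-v|))^{1/2}$ simplifies to $c\sqrt{\phi(\delta_D(x))/\phi(r)}$. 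Taking the ratio $f(x)/f(y)$ for $x,y\in D\cap B(z,r/2)$, all the common factors cancel and one is left with $f(x)/f(y)\le c\sqrt{\phi(\delta_D(x))/\phi(\delta_D(y))}$, with $c$ depending only on $\phi,\eta,L_0,L_3,d,\Lambda$ (and not on $r$ or $z$) precisely because the heat-kernel constants in Theorem \ref{t:main}, read at the scale $r$, are scale-invariant.

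The main obstacle I expect is twofold. First, one must verify the scale-invariance of all constants: the Green function estimate in Theorem \ref{C:green} is stated for a fixed bounded set, so I would need to re-derive the relevant two-sided bound for $G_{D\cap B(z,r)}$ with constants uniform in $r\in(0,R_0\wedge 1)$ and $z\in\partial D$ — this requires a scaling argument showing that $\phi$'s weak scaling {\bf(WS)} and conditions {\bf(SD)}, {\bf(K$_\eta$)} are stable under the rescaling $x\mapsto x/r$ with controlled change of constants, and that the $C^{1,\rho}$ characteristics of $D\cap B(z,r)$ near $z$ are $r$-independent after rescaling (using $\rho>\ua/2$). Second, the passage from the Green-potential representation to the clean ratio estimate needs a careful decomposition of $U$ into the "near-$z$" region (where $\delta_D(x)$-decay is visible and one uses interior Harnack plus Carleson-type estimates to compare $f$ to its value at a fixed reference point $x_r$ with $\delta_D(x_r)\asymp r$) and the "bulk" region (handled by the crude bound on $K$ and {\bf(C)}); making this decomposition quantitative while keeping constants uniform is the technical heart, and is where the argument most closely follows the established template of \cite{MR3271268, MR3237737}.
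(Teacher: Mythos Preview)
Your approach differs substantially from the paper's, and it contains a genuine gap.

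The paper does \emph{not} try to run heat-kernel or Green-function estimates on the varying sets $D\cap B(z,r)$. Instead it proceeds in two decoupled steps. First it proves a general scale-invariant boundary Harnack inequality (Proposition~\ref{prop:BHI}), valid for \emph{arbitrary} open $D$, by verifying the abstract assumptions of \cite{MR3271268} and checking that the relevant constants $c_{(2.8)}$, $c_{(2.9)}$, $c_{(2.10)}$, $\wh\varrho$ are bounded uniformly in the center and the radius $r<1$; this uses only {\bf(J1)}, {\bf(L)}, {\bf(C)}, {\bf(K$_\eta$)} and Lemma~\ref{genBound}, not the Dirichlet heat-kernel estimates. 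Second, with the qualitative BHI in hand, it fixes $z\in\partial D$, constructs a \emph{single} bounded $C^{1,\rho}$ set $U$ with $B(z,7R_0/8)\cap D\subset U\subset B(z,R_0)\cap D$ and characteristics depending only on $(R_0,\Lambda)$, takes a reference point $z_0\in U\setminus\overline{B(z,3R_0/4)}$, and sets $g_1(x)=G_U(x,z_0)$. Proposition~\ref{prop:BHI} gives $f(x)/f(y)\le c\,g_1(x)/g_1(y)$, and Theorem~\ref{C:green} applied to the \emph{fixed} domain $U$ gives $g_1(x)/g_1(y)\asymp\sqrt{\phi(\delta_D(x))/\phi(\delta_D(y))}$. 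The scale-invariance in $r$ thus lives entirely in the abstract BHI of \cite{MR3271268}; the Green-function estimate is only ever invoked at the fixed scale $R_0$.

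Your proposal, by contrast, tries to apply Theorem~\ref{t:main} (hence Theorem~\ref{C:green}) directly to $U=D\cap B(z,r)$. This fails for two concrete reasons. First, $D\cap B(z,r)$ is \emph{not} a $C^{1,\rho}$ open set: the intersection $\partial D\cap\partial B(z,r)$ produces corners, and no amount of shrinking $r$ removes them. You would have to replace $D\cap B(z,r)$ by a $C^{1,\rho}$ approximation at each scale $r$, and then its $C^{1,\rho}$ characteristics are $(\mathrm{const}\cdot r,\Lambda')$, not $(R_0,\Lambda)$. Second, and relatedly, the constants in Theorems~\ref{t:nmain} and~\ref{C:green} depend on $R_0$ (and, for Theorem~\ref{t:nmain}(1), on $\mathrm{diam}(D)$); the paper nowhere establishes that these constants are uniform when the domain shrinks to scale $r$. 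Your assertion that ``the heat-kernel constants in Theorem~\ref{t:main}, read at the scale $r$, are scale-invariant'' is exactly the missing ingredient --- proving it would amount to redoing Sections~\ref{sec:exitY}--\ref{sec:lbd} with careful bookkeeping of how each constant transforms under $x\mapsto x/r$, $\phi\mapsto\phi(r\,\cdot)/\phi(r)$. This is not hopeless, but it is substantial extra work that the paper's two-step strategy entirely avoids by confining the $C^{1,\rho}$ analysis to a single fixed auxiliary domain.
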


The rest of this paper is organized as follows.
In Section \ref{genY}, we solve a martingale-type problem for $Y$  which yields a Dynkin-type formula. 
Section \ref{prZ} deals with the isotropic L\'evy process $Z$ with L\'evy measure $\nu(|x|)dx$.  We  compute some key upper bounds of  the generator of $Z$ on our testing function for $C^{1, 1}$ open sets.
In Section \ref{sec:exitY}, we present the key estimates of exit distributions (Theorem \ref{T:exit}). 
Section \ref{sec:upbd} contains the proof of the upper bound of $p_D(t,x,y)$.
We use Meyer's construction when $|x-y|<c$.
Then, by using Lemma~\ref{L:4.1} twice, we prove the upper bound of $p_D(t,x,y)$ without using the lower bound of $p(t,x,y)$.
In Sections \ref{sec:plbd} and \ref{sec:lbd}, we prove the lower bound estimates for $p_D(t,x,y)$. 
First, we consider the case $\delta_D(x)\wedge \delta_D(y)\ge t^{1/\alpha}$; that is, $x$ and $y$ are kept away from the boundary of $D$.
These results are presented in Section \ref{sec:plbd} and the key estimates of  the exit distributions obtained in Section \ref{sec:exitY} are used in Section \ref{sec:lbd} to prove the lower bound for all $x, y \in D$.
Finally, in Section \ref{sec:bhi}, as an application of Theorem \ref{T:exit}, we derive the Green function estimates and 
the uniform scale-invariant Boundary Harnack inequality with explicit decay rates in $C^{1,1}$ open sets.

Throughout the rest of this paper, positive constants $L_0, L_1, L_2, L_3, \gamma_1, \gamma_2$ can be regarded as fixed. 
In the statements and the proofs of results, constants $c_i=c_i(a,b,c,\ldots)$, $i=1,2,3, \dots$, denote generic constants that depend on $a, b, c, \ldots$, the exact values of which are unimportant.
These are given anew in each statement and each proof.
The dependence of the constants on the dimension $d \ge 1$ is not be mentioned explicitly.

For  a function space $\H(U)$
on an open set $U$ in $\R^d$, we let   
$\H_c(U):=\{f\in\H(U): f \mbox{ has  compact support}\},$
$\H_0(U):=\{f\in\H(U): f \mbox{ vanishes at infinity}\}$ and $\H_b(U):=\{f\in\H(U): f \mbox{ is bounded}\}$.

\section{Generator of $Y$}\label{genY}

In this section, we assume that $Y$ is the symmetric pure jump Hunt process with the jumping intensity kernel $J$ satisfying the conditions {\bf(J1.1)}, {\bf(J1.2)} and \textbf{(K$_{\eta}$)}. Recall that these conditions imply that $Y$ is strong Feller (see \cite[Theorem 3.1]{MR2524930}).

We define an operator $\gener$ by
\begin{align}
\label{e:opgen}
\gener g(x):=P.V.\int(g(x+h)-g(x))J(x,x+h)dh:=\lim_{\varepsilon\,\downarrow 0}\gener^{\varepsilon}g(x),
\end{align}
where $$\gener^{\varepsilon}g(x):=\int_{|h|>\varepsilon}(g(x+h)-g(x))J(x,x+h)dh,$$
whenever these exist pointwise. 
Let $g\in C_c^2(\Rd)$ and
$\varepsilon<r<1$,  then by {\bf(J1.1)}, we have
\begin{align*}
\gener^\varepsilon &g(x)=\kappa(x,x)\int_{\varepsilon<|h|<r}(g(x+h)-g(x)-h\cdot\nabla g(x))\nu(|h|)dh\\
&+\int_{\varepsilon<|h|<r}(g(x+h)-g(x))(\kappa(x,x+h)-\kappa(x,x))\nu(|h|)dh\\
&+\int_{r\le|h|\le 1}(g(x+h)-g(x))\kappa(x, x+h)\nu(|h|)dh+\int_{1<|h|}(g(x+h)-g(x))J(x, x+h)dh.
\end{align*}
Since \textbf{(K$_{\eta}$)} holds, we have that 
$$|(g(x+h)-g(x))(\kappa(x,x+h)-\kappa(x,x))|\leq ||\nabla g||_{\infty}(L_3+2L_0)|h|^{\eta+1}, \quad x,h\in \Rd.$$
Since {\bf(WS)} and the  inequality $\eta> \ua/2> \ua-1$ imply $\int_{|h|<r}|h|^{2} \nu(|h|)dh\le \int_{|h|<r}|h|^{\eta+1} \nu(|h|)dh<\infty$, $\gener g$ is well defined and $\gener^\varepsilon g$ converges to $\gener g$ locally {uniformly} on $\Rd$.
 Furthermore, for every $0<r<1$,
\begin{align}\label{g:kappa_eta}
\gener &g(x)=\kappa(x,x)\int_{|h|<r}(g(x+h)-g(x)-h\cdot\nabla g(x))\nu(|h|)dh\nn\\
+&\int_{|h|<r}(g(x+h)-g(x))(\kappa(x,x+h)-\kappa(x,x))\nu(|h|)dh\nn\\
+&\int_{r\le|h|\le 1}(g(x+h)-g(x))\kappa(x, x+h)\nu(|h|)dh+\int_{1<|h|}(g(x+h)-g(x))J(x, x+h)dh.
\end{align}

Now we are ready to prove the following lemma. 

\begin{lem}\label{genBound}
There is $C_{\ref{genBound}}=C_{\ref{genBound}}(\phi,  \eta, L_0,L_3)>0$ such that for any
function  $g\in C_c^2(\Rd)$ and $0<r<1$,
\begin{align}\label{e:genB}
||\gener g||_\infty\leq \frac{C_{\ref{genBound}}}{\phi(r)}\left( r^2||\partial^2 g||_\infty+r^{\eta+1}||\nabla g||_\infty+||g||_\infty\right).
\end{align}
\end{lem}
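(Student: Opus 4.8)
The plan is to bound each of the four terms in the representation \eqref{g:kappa_eta} of $\gener g(x)$ separately, using the scaling hypothesis {\bf(WS)} and the regularity condition {\bf(K$_\eta$)}, and then to collect the estimates. Fix $g\in C_c^2(\Rd)$ and $0<r<1$. First I would record the two elementary consequences of {\bf(WS)} that drive everything: since $\nu(s)=1/(\phi(s)s^d)$ and $\phi(s)\ge \uC^{-1}\phi(r)(s/r)^{\ua}$ for $s\le r$, we get, for any exponent $\theta>\ua$,
\begin{align*}
\int_{|h|<r}|h|^{\theta}\nu(|h|)\,dh
= \omega_{d-1}\int_0^r s^{\theta-1}\phi(s)^{-1}\,ds
\le c\,\frac{r^{\theta}}{\phi(r)},
\end{align*}
where $\omega_{d-1}$ is the surface area of the unit sphere and $c=c(\phi,\theta,d)$; this uses $\theta-\ua>0$ so the integral converges. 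Likewise, from $\phi(s)\ge \phi(r)$ for $s\ge r$ one gets $\int_{r\le|h|\le 1}\nu(|h|)\,dh\le c\,\phi(r)^{-1}\,\omega_{d-1}\log(1/r)$ — actually it is cleaner to bound $\int_{r\le |h|\le 1}\nu(|h|)dh \le \phi(r)^{-1}\int_{r\le|h|\le 1}|h|^{-d}dh$, but since we only need an upper bound by $c\,r^{-\eta}\cdot r^{\eta}\phi(r)^{-1}$ — I will instead just use the cruder bound $\int_{r\le|h|\le 1}\nu(|h|)dh\le c\phi(r)^{-1}$ obtained by noting that the same WS argument with the lower scaling gives $\nu(s)\le \uC\,\nu(r)(r/s)^{d}\cdots$; either way one lands on a constant times $\phi(r)^{-1}$ possibly times $\|g\|_\infty$. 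Finally {\bf(J1.2)} gives $\sup_x\int_{|h|>1}J(x,x+h)\,dh=:A<\infty$, and here one should observe $A$ is controlled by $\phi$ and $L_0$ via {\bf(J1.1)}–{\bf(WS)} applied near $|h|=1$, so it contributes $\le c\,\phi(1)^{-1}\|g\|_\infty\le c\,\phi(r)^{-1}\|g\|_\infty$ using $\phi(r)\le\phi(1)$.

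Now the four terms. For term one, a second-order Taylor expansion gives $|g(x+h)-g(x)-h\cdot\nabla g(x)|\le \tfrac12|h|^2\|\partial^2 g\|_\infty$, so by the displayed integral bound with $\theta=2$ (legitimate since $2>\ua$) this term is $\le c\,\kappa(x,x)\,r^2\phi(r)^{-1}\|\partial^2 g\|_\infty\le c\,L_0\,r^2\phi(r)^{-1}\|\partial^2 g\|_\infty$. For term two, combine the mean-value bound $|g(x+h)-g(x)|\le |h|\,\|\nabla g\|_\infty$ with {\bf(K$_\eta$)}, $|\kappa(x,x+h)-\kappa(x,x)|\le L_3|h|^{\eta}$ (valid since $|h|<r<1$), to get integrand $\le L_3\|\nabla g\|_\infty |h|^{\eta+1}\nu(|h|)$; the displayed bound with $\theta=\eta+1$ applies because $\eta>\ua/2>\ua-1$, giving $\le c\,L_3\,r^{\eta+1}\phi(r)^{-1}\|\nabla g\|_\infty$. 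For term three, use $|g(x+h)-g(x)|\le 2\|g\|_\infty$ and $\kappa\le L_0$ together with $\int_{r\le|h|\le1}\nu(|h|)\,dh\le c\,\phi(r)^{-1}$ to get $\le c\,L_0\,\phi(r)^{-1}\|g\|_\infty$. For term four, $|g(x+h)-g(x)|\le 2\|g\|_\infty$ and {\bf(J1.2)} (together with the remark above that the relevant mass is $\le c\,\phi(r)^{-1}$) give $\le c\,\phi(r)^{-1}\|g\|_\infty$. Summing the four and absorbing $L_0,L_3$ into the constant yields exactly
\begin{align*}
\|\gener g\|_\infty\le \frac{C_{\ref{genBound}}}{\phi(r)}\bigl(r^2\|\partial^2 g\|_\infty+r^{\eta+1}\|\nabla g\|_\infty+\|g\|_\infty\bigr),
\end{align*}
with $C_{\ref{genBound}}$ depending only on $\phi,\eta,L_0,L_3$ (and $d$, which by convention is suppressed). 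Since the bound is uniform in $x$, taking the supremum over $x$ finishes the proof.

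The only genuinely delicate point is making sure each integral of $|h|^{\theta}\nu(|h|)$ near the origin actually converges and is controlled by $r^{\theta}/\phi(r)$ with a constant independent of $r$: this is precisely where the hypotheses $2>\ua$ and $\eta+1>\ua$ (the latter from $\eta>\ua/2$, hence a fortiori $\eta>\ua-1$) are used, and where {\bf(WS)} — rather than any pointwise behaviour of $\phi$ — does the work. The contribution of the region $|h|\ge r$ (terms three and four) is comparatively soft; one just needs that its total $\nu$-mass plus the $J$-mass beyond radius $1$ is $\le c/\phi(r)$, which again follows from {\bf(WS)} (lower bound, to control $\nu$ from above on $[r,1]$) and {\bf(J1.2)}. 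No use of the lower scaling constant $\la$ beyond this is needed, and the estimate is manifestly scale-free in the claimed sense.
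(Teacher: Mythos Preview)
Your proof is correct and follows essentially the same approach as the paper's: decompose $\gener g$ via \eqref{g:kappa_eta}, bound the two small-$|h|$ terms using Taylor expansion and {\bf(K$_\eta$)} together with the upper scaling in {\bf(WS)} (giving $\int_0^r s^{\theta-1}\phi(s)^{-1}ds\le c\,r^{\theta}/\phi(r)$ for $\theta=2,\eta+1$), and bound the tail terms using the lower scaling in {\bf(WS)} (for $\int_r^1 s^{-1}\phi(s)^{-1}ds\le c/\phi(r)$) and {\bf(J1.2)} absorbed via $\phi(r)\le\phi(1)$. Your discussion of term three meanders a bit (the $\log(1/r)$ detour is unnecessary), but you land on the right argument; note also that the constant from {\bf(J1.2)} is an independent datum, not derivable from $\phi$ and $L_0$, though the paper is equally loose on this point.
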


\begin{proof}
By \eqref{g:kappa_eta}, \eqref{a:kappa}, \textbf{(K$_{\eta}$)} and {\bf(J1.2)}, we obtain that
\begin{align}\label{kappa_eta}
|\gener g(x)|\le  c_0\Big(& L_0 ||\partial^2 g||_\infty \int_0^r \frac{s}{\phi(s)} ds  +L_3||\nabla g||_\infty\int_0^r \frac{s^{\eta}}{\phi(s)} ds\nn\\
&+ 2 L_0 ||g||_\infty\int_r^1 \frac{ds}{s\phi(s)} \Big) + c_1||g||_{\infty}.
\end{align}
For $s\le r$, since $\phi(r)/\phi(s)\le \uC (r/s)^{\ua}$ by {\bf(WS)} and $\eta>  \ua/2> \ua-1$, we have
\begin{align}\label{kappa_s2}
\int_0^r \frac{s^{\eta}}{\phi(s)} ds\le \frac{\uC}{\phi(r) } \frac{1}{\eta+1-\ua}r^{\eta+1}.
\end{align}
For $r<s$,  since $ \lC(s/r)^{\la}\le \phi(s)/\phi(r)$ by  {\bf(WS)}, we have 
\begin{align}\label{kappa_l}
\int_r^{\infty} \frac{ds}{s\phi(s)}\le \frac{\lC^{-1}}{\phi(r)}r^{\la}\int_r^{\infty} s^{-1-\ua}ds<(\lC\, \la)^{-1}\frac{1}{\phi(r)}.
\end{align}
Applying \eqref{kappa_s2} and \eqref{kappa_l} to  \eqref{kappa_eta}, we conclude that \eqref{e:genB} hold.
\end{proof}

\begin{lem}\label{Dynkin_formula}
For any $u\in C^2_c(\Rd)$ and $x\in \R^d$, there exists a $\p_x$-martingale $M^u_t$ with respect to the filtration of $Y$ such that 
$$M^u_t=u(Y_t)-u(Y_0)-\int_0^{t}  {\gener} u(Y_s) ds$$
 $\p_x$-a.s. In particular, for any 
stopping time $S$ with $\E_x S<\infty$
 we have
\begin{align}
\label{e:SnS}
\E_x u(Y_{S})-u(x)=\E_x\int^{S}_0\gener u(Y_s)ds.
\end{align}
\end{lem}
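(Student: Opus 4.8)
The plan is to establish the martingale decomposition via a standard localization/approximation argument, then deduce the Dynkin formula \eqref{e:SnS} by optional stopping. First I would introduce, for $x\in\Rd$ fixed and $u\in C^2_c(\Rd)$, the candidate process
\[
M^u_t := u(Y_t)-u(Y_0)-\int_0^t \gener u(Y_s)\,ds .
\]
By Lemma \ref{genBound}, $\gener u$ is bounded (with an explicit bound in terms of the $C^2$-norm of $u$, taking e.g.\ $r=1/2$), so the time integral is well defined, finite, and continuous in $t$. The main content is to show $M^u$ is a $\p_x$-martingale with respect to the (augmented) filtration $\{\mathcal{F}_t\}$ of $Y$. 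Since $Y$ is a Hunt process associated with the regular Dirichlet form $(\mathcal{E},\mathcal{F})$, and since the conditions {\bf (J1.1)}, {\bf (J1.2)}, {\bf (K$_\eta$)} guarantee $Y$ is strong Feller with transition semigroup $(P_t)$, I would verify that $P_t u$ is differentiable in $t$ with
\[
\frac{d}{dt}P_t u = P_t(\gener u),
\]
i.e.\ that $C^2_c(\Rd)$ lies in the domain of the $L^\infty$- (or Feller-) generator and that this generator agrees with the pointwise operator $\gener$ of \eqref{e:opgen}. Granting this, the Markov property gives, for $s<t$,
\[
\E_x[M^u_t-M^u_s\mid \mathcal{F}_s]
= \E_x\!\left[u(Y_t)-u(Y_s)-\int_s^t \gener u(Y_r)\,dr \,\Big|\, \mathcal{F}_s\right]
= \big(P_{t-s}u\big)(Y_s)-u(Y_s)-\int_0^{t-s}\big(P_r \gener u\big)(Y_s)\,dr = 0,
\]
by the fundamental theorem of calculus applied to $r\mapsto P_r u$. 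Integrability of $M^u_t$ is clear since $u$ and $\gener u$ are bounded.

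To justify that $\gener$ is the Feller generator on $C^2_c(\Rd)$, I would argue by an approximation: for $g\in C^2_c(\Rd)$ one has, pointwise, $\gener^\varepsilon g \to \gener g$ locally uniformly (this is exactly the convergence established just before Lemma \ref{genBound}, using {\bf (WS)} and $\eta>\ua-1$ to control $\int_{|h|<r}|h|^{\eta+1}\nu(|h|)\,dh<\infty$), and each $\gener^\varepsilon$ is a bounded operator that is easily seen to be the generator of a pure-jump process with the truncated kernel $J(x,x+h)\ind_{\{|h|>\varepsilon\}}$. Comparing the associated Dirichlet forms (they differ by the small-jump part, which converges to a form dominated by $\int (u(x)-u(y))^2 J(x,y)\ind_{|x-y|\le\varepsilon}$, going to $0$) and using the uniform bound of Lemma \ref{genBound} to pass to the limit, one identifies $\gener u$ as $\lim_{h\downarrow0}\frac{P_h u - u}{h}$ in the appropriate sense. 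Alternatively, and perhaps more cleanly, one may quote that under {\bf(J1.1)}--{\bf(J1.2)}, {\bf(K$_\eta$)} the process $Y$ solves the martingale problem for $(\gener, C^2_c(\Rd))$ — which is precisely the statement being proved — so I would instead give the self-contained Dirichlet-form / approximation proof sketched above.

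Finally, for the Dynkin formula: given a stopping time $S$ with $\E_x S<\infty$, I apply optional stopping to the martingale $M^u$. Because $\|\gener u\|_\infty<\infty$, the process $t\mapsto\int_0^{t\wedge S}\gener u(Y_s)\,ds$ is dominated by $\|\gener u\|_\infty\, S \in L^1(\p_x)$, and $u$ is bounded, so $\{M^u_{t\wedge S}\}_{t\ge0}$ is uniformly integrable; letting $t\to\infty$ and using that $S<\infty$ $\p_x$-a.s.\ (as $\E_x S<\infty$) together with the quasi-left-continuity / c\`adl\`ag paths of $Y$ gives $\E_x[M^u_S]=\E_x[M^u_0]=0$, which rearranges to \eqref{e:SnS}.

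The main obstacle is the middle step: rigorously identifying the pointwise principal-value operator $\gener$ with the probabilistic (Feller or $L^2$-) generator of $Y$ on the class $C^2_c(\Rd)$, i.e.\ exchanging the $\varepsilon\downarrow0$ limit in $\gener^\varepsilon$ with the semigroup limit $h\downarrow0$. Once this identification is in hand, both the martingale property and the optional-stopping step are routine, the only quantitative input being the uniform bound $\|\gener u\|_\infty<\infty$ from Lemma \ref{genBound}, which is exactly what makes the dominated-convergence and uniform-integrability arguments go through.
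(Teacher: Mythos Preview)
Your overall architecture matches the paper's: identify $\gener$ with the generator on $C^2_c(\Rd)$, use the Markov property together with $P_t u - u = \int_0^t P_s\gener u\,ds$ to get the martingale, and then apply optional stopping with the bound $|M^u_t|\le 2\|u\|_\infty + t\|\gener u\|_\infty$ to obtain \eqref{e:SnS}. The optional-stopping step you wrote is essentially the same as the paper's.

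The substantive difference is in how the identification is obtained. You propose to work with the Feller ($C_0$-) generator and to prove $C^2_c\subset D(A_{\text{Feller}})$ with $A_{\text{Feller}}u=\gener u$ via an approximation by the truncated-jump operators $\gener^\varepsilon$ and convergence of the associated Dirichlet forms/semigroups. You correctly flag this as the main obstacle and leave it only sketched; carrying it out would be nontrivial (one needs semigroup convergence, not just pointwise convergence of $\gener^\varepsilon g$). The paper takes a much lighter route: it quotes \cite[Proposition~2.5]{MR2365348} to get $C^2_c(\Rd)\subset D(A)$ and $A|_{C^2_c}=\gener|_{C^2_c}$ for the \emph{$L^2$-generator} $A$, so that strong continuity gives $T_t u - u = \int_0^t T_s\gener u\,ds$ in $L^2$, hence for a.e.\ $x$. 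It then upgrades this a.e.\ identity to a pointwise one by a short strong-Feller argument: with $g_t(x):=\int_0^t T_s\gener u(x)\,ds$ one has $T_\varepsilon g_{t-\varepsilon}\in C_b(\Rd)$ and $|g_t - T_\varepsilon g_{t-\varepsilon}|=|g_\varepsilon|\le \varepsilon\|\gener u\|_\infty$, so $g_t\in C_b(\Rd)$ and the identity holds for every $x$. This $L^2$-identification-plus-strong-Feller-upgrade avoids your approximation scheme entirely, and it also makes explicit why the semigroup identity holds at \emph{every} point (needed in your conditional-expectation computation at $Y_s$), a subtlety your write-up skips. Note also that under the hypotheses here ({\bf (J1.1)}, {\bf (J1.2)}, {\bf (K$_\eta$)}) the process is only known to be strong Feller, not Feller; the Feller property is assumed separately later in the paper as condition {\bf (L)}, so your direct $C_0$-generator route is not actually available at this stage.
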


\begin{proof}
Let $(A, D(A))$ be the $L^2$-generator of the semigroup $T_t$ with respect to $Y$.
Due to \cite[Proposition 2.5]{MR2365348}, we have $C_c^2(\Rd) \subset D(A)$ and $A|_{C_c^2(\Rd)}=\gener |_{C_c^2(\Rd)}$.
Since $T_t$ is strongly continuous (see \cite[Theorem 1.3.1 and Lemma 1.3.2]{MR569058}) we have that  for any $f\in D(A)$ and $t\geq 0$,
$$\left|\left|(T_tf-f)-\int^t_0T_sAfds\right|\right|_{L^2}=0$$
(see e.g. \cite[Proposition 1.5]{MR838085}). 
Hence for $u\in C_c^2(\Rd)$, 
\begin{equation}\label{DF1}
T_tu(x)-u(x)=\int^t_0T_s \gener u(x)ds,\quad a.e.\,\, x\in \Rd,
\end{equation}
and $\gener u$ is bounded by Lemma \ref{genBound}.

Let us denote $g_t(x)=\int^t_0T_s\gener u(x)ds$. First, we show that $g_t\in C_b(\Rd)$, $t>0$. Note that  $g_t(x)\leq t||\gener u||_\infty$. 
Hence, since $T_\varepsilon$ is strong Feller for any $\varepsilon>0$, 
we have $T_\varepsilon g_{t-\varepsilon}\in C_b(\Rd)$ for all $\varepsilon\in (0, t)$. 
Moreover,
$$|g_t(x)-T_\varepsilon g_{t-\varepsilon}(x)|=|g_\varepsilon(x)|\leq \varepsilon ||\gener u||_\infty.$$
Hence, $g_t$ is continuous and  \eqref{DF1} holds for any $x\in\Rd$. 
This and Markov property imply that
$$M^u_t=u(Y_t)-u(Y_0)-\int^t_0\gener u(Y_s)ds$$ 
is $\p_x$-martingale for any $x\in\Rd$. 
Since $|M^u_t|\leq 2||u||_\infty+t||\gener u||_\infty$, by the optional stopping theorem \eqref{e:SnS} follows.\end{proof}

\begin{lem}\label{st5n}
There exists a constant $C_{\ref{st5n}}=C_{\ref{st5n}}(\phi,  \eta,  L_0, L_3 )>0$ such that, for any $r \in (0,1]$, $x_0\in\Rd$, and any stopping time $S$ (with respect to the filtration of $Y$), we have
$$
\mathbb{P}_x\left(|Y_{S}-x_0|\ge r\right) \,\le\, C_{\ref{st5n}} \,\frac{ \E_x[S]}{\phi(r)},
\qquad x \in  B(x_0, r/2).
$$
\end{lem}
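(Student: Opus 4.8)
The plan is to use a standard exit-time argument based on the Dynkin-type formula from Lemma~\ref{Dynkin_formula} together with the bound on the generator from Lemma~\ref{genBound}. The idea is to test the generator against a smooth cutoff function that separates $B(x_0,r/2)$ from the complement of $B(x_0,r)$. Concretely, fix $r\in(0,1]$ and $x_0\in\Rd$, and choose $g=g_{x_0,r}\in C_c^2(\Rd)$ with $0\le g\le 1$, $g\equiv 0$ on $B(x_0,r/2)$, $g\equiv 1$ on $B(x_0,r)^c$, and with derivative bounds $\|\nabla g\|_\infty\le c/r$ and $\|\partial^2 g\|_\infty\le c/r^2$ (such a function exists by scaling a fixed bump function; the constant $c$ is universal). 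The constructed $g$ can be taken to have compact support since we only need control near $B(x_0,r)$; alternatively multiply by a far-away cutoff that does not affect values on $B(x_0,r)$, so that $\gener g$ is still controlled there, but to be safe one keeps $\|g\|_\infty\le 1$ globally and localizes.

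First I would apply Lemma~\ref{genBound} to $g=g_{x_0,r}$ with the radius parameter chosen to be $r$ itself; since $r\le 1$, the lemma gives
\begin{align*}
\|\gener g\|_\infty\le \frac{C_{\ref{genBound}}}{\phi(r)}\left(r^2\cdot\frac{c}{r^2}+r^{\eta+1}\cdot\frac{c}{r}+1\right)\le \frac{c'}{\phi(r)},
\end{align*}
where $c'$ depends only on $\phi,\eta,L_0,L_3$ (using $r^\eta\le 1$). Next, for a stopping time $S$ with $\E_x[S]<\infty$, Lemma~\ref{Dynkin_formula} yields
\begin{align*}
\E_x[g(Y_S)]-g(x)=\E_x\int_0^S \gener g(Y_s)\,ds.
\end{align*}
For $x\in B(x_0,r/2)$ we have $g(x)=0$, and on the event $\{|Y_S-x_0|\ge r\}$ we have $g(Y_S)=1$, so the left-hand side is at least $\p_x(|Y_S-x_0|\ge r)$. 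The right-hand side is bounded in absolute value by $\|\gener g\|_\infty\,\E_x[S]\le c'\E_x[S]/\phi(r)$. Combining gives the claim with $C_{\ref{st5n}}=c'$.

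The one subtlety to address is the hypothesis $\E_x[S]<\infty$, which is needed to invoke \eqref{e:SnS}; if $\E_x[S]=\infty$ the inequality is trivially true since the right-hand side is $+\infty$, so I would simply dispose of that case first. The other minor point is that Lemma~\ref{Dynkin_formula} and Lemma~\ref{genBound} are stated for $g\in C_c^2(\Rd)$, so I must make sure the cutoff is genuinely compactly supported; this is harmless because the process behaviour outside a large ball is irrelevant for the inequality on $B(x_0,r)$ — one can take $g$ supported in $B(x_0,2)$ (recall $r\le 1$) equal to $1$ on the annulus $B(x_0,r)^c\cap B(x_0,3/2)$, and the generator bound still applies since Lemma~\ref{genBound} only uses $\|g\|_\infty,\|\nabla g\|_\infty,\|\partial^2 g\|_\infty$. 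I do not expect a serious obstacle here; the main content is the correct choice of test function and bookkeeping of the constant, both of which are routine given the two preceding lemmas.
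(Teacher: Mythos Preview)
Your overall strategy---test the Dynkin formula against a smooth cutoff and bound the generator via Lemma~\ref{genBound}---is exactly what the paper does. But there is a genuine gap in your choice of test function.

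You want $g\equiv 0$ on $B(x_0,r/2)$ and $g\equiv 1$ on $B(x_0,r)^c$. Since $B(x_0,r)^c$ is unbounded, this is incompatible with $g\in C_c^2(\Rd)$, which you need for both Lemma~\ref{genBound} and Lemma~\ref{Dynkin_formula}. You notice this and propose to cut $g$ off so that it is supported in $B(x_0,2)$ and equals $1$ only on the annulus $B(x_0,r)^c\cap B(x_0,3/2)$. The generator bound still works, but now the key inequality $\E_x[g(Y_S)]\ge \p_x(|Y_S-x_0|\ge r)$ fails: the process $Y$ has jumps, so on the event $\{|Y_S-x_0|\ge r\}$ it may well happen that $|Y_S-x_0|\ge 2$, and there $g(Y_S)=0$, not $1$. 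Your remark that ``the process behaviour outside a large ball is irrelevant'' is simply not correct for a jump process and a general stopping time $S$.

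The fix is to flip the roles: take $g\in C_c^\infty(\Rd)$ with $g\equiv 1$ on $B(x_0,r/2)$ and $g\equiv 0$ on $B(x_0,r)^c$ (this \emph{is} compactly supported). Then for $x\in B(x_0,r/2)$ one has $g(x)=1$, and since $0\le g\le 1$,
\[
\p_x(|Y_S-x_0|\ge r)\le \E_x[1-g(Y_S)] = g(x)-\E_x[g(Y_S)] = -\E_x\!\int_0^S \gener g(Y_s)\,ds \le \|\gener g\|_\infty\,\E_x[S],
\]
and the rest of your bookkeeping goes through unchanged. This is precisely the paper's argument (the paper writes $g_r$ taking values in $[-1,0]$ rather than $[0,1]$, which is cosmetically the same).
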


\begin{proof} 
Fix $x_0\in \Rd$. 
Since this lemma is clear for $\E_x[S]=\infty$, we consider the case that $\E_x[S]<\infty$ for $x \in B(x_0, r/2)$.
Define a radial function $g\in C^{\infty}_c(\R^d)$ such that $-1\le g\le 0$, with 
\begin{align*}
g(y):=\left\{\begin{array}{lll}-1, & \text{if } |y|< 1/2\\
0, & \text{if }  |y|\ge 1.\\
\end{array}
\right.
\end{align*}
Then, 
\begin{align*}
\sum^d_{i=1}\left\|\frac{\partial}{\partial y_i}
g\right\|_\infty+ \sum^d_{i, j=1}\left\|\frac{\partial^2}{\partial y_i\partial y_j}
g\right\|_\infty 
=c_1
<\infty .
\end{align*}
For any $r \in (0,1]$, define $g_{ r}(y)=g(\frac{x_0-y}{r})$ so that $-1\le g_{ r}\le 0$,
\begin{equation}\label{e:n1}
g_{ r}(y)=
\begin{cases}
-1, & \text{if }\,|x_0-y|< r/2\\
0, & \text{if }\, |x_0-y|\ge r, 
\end{cases}
\end{equation}
and 
\begin{align}\label{e:n2}
\sum^d_{i=1}\left\|\frac{\partial}{\partial y_i} g_{ r}\right\|_\infty \,<\, c_1\, r^{-1} \quad \text{and} \quad  \sum^d_{i, j=1}\left\|\frac{\partial^2}{\partial y_i\partial y_j} g_{r}\right\|_\infty \,<\, c_1\, r^{-2}.
\end{align}
By Lemma \ref{genBound}, there exists $c_2=c_2(\la, \lC, \ua,\uC, \eta,L_0,L_3)>0$ such that for $0< r<1$,
\begin{align}\label{e:genb}
||\gener g_{r}||_\infty\le \frac{ c_2}{ \phi(r)}.
\end{align}

Combining Lemma \ref{Dynkin_formula}, \eqref{e:n1} and \eqref{e:genb}, we find that for any $x\in B(x_0, r/2)$ with $\E_xS<\infty$,  we have
\begin{align*}
&\p_x\(|Y_{S}-x_0|\ge r\)
=\E_x\left[1+g_{ r} \left(Y_{S}\right);|Y_{S}-x_0|\ge r\right]\\
&\le \E_x\left[1+g_{ r} \left(Y_{S}\right)\right]=-g_r(x)+\E_x\left[g_{ r} \left(Y_{S}\right)\right]= \E_x\left[ \int_0^{S}   {\gener} g_{ r}(Y_t)dt \right] \le \|{\gener}g_{ r}\|_\infty\,  \E_x[S]\\&\le c_2\frac{\E_x[S]}{\phi(r)}.
\end{align*}
\end{proof}

Recall that for any open set $D\subset \R^d$, $\tau_D= \inf\{t>0: Y_t\notin D\}$ denote the first exit time of $D$ by the process $Y_t$.

\begin{cor}\label{st5}
There exists a constant $C_{\ref{st5}}=C_{\ref{st5}}(\phi,  \eta,  L_0, L_3 )>0$ such that, for any $r \in (0,1]$, $x_0\in\Rd$, and any open sets $U$ and $D$ with $D \cap B(x_0, r ) \subset U  \subset  D$, we have
\begin{align}\label{e:st5}
\mathbb{P}_x\left(Y_{\tau_U} \in D\right) \,\le\, C_{\ref{st5}} \,\frac{ \E_x[\tau_U]}{\phi(r)},
\qquad x \in D\cap B(x_0, r/2).
\end{align}
\end{cor}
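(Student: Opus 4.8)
The plan is to deduce Corollary~\ref{st5} directly from Lemma~\ref{st5n} applied to the stopping time $S=\tau_U$. The only point that needs care is that Lemma~\ref{st5n} is stated for an arbitrary stopping time $S$ but implicitly uses the bound $\E_x[S]<\infty$ (the conclusion being trivial otherwise); so first I would dispose of the case $\E_x[\tau_U]=\infty$, for which \eqref{e:st5} holds vacuously. Assume then that $x\in D\cap B(x_0,r/2)$ and $\E_x[\tau_U]<\infty$.

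The key observation is that on the event $\{Y_{\tau_U}\in D\}$ the exit point must lie outside $B(x_0,r)$. Indeed, since $D\cap B(x_0,r)\subset U$, a point of $D\cap B(x_0,r)$ is an interior point of $U$, hence cannot be $Y_{\tau_U}$ (the process exits $U$ into $U^c$, up to the usual polar-set considerations, which cause no problem here because $U^c\cap B(x_0,r)\subset D^c$). More precisely, $Y_{\tau_U}\in \ol U^{\,c}\cup\partial U$, and since the open set $U$ contains $D\cap B(x_0,r)$ we get $\{Y_{\tau_U}\in D\}\subset\{Y_{\tau_U}\notin D\cap B(x_0,r)\}\subset\{|Y_{\tau_U}-x_0|\ge r\}$. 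Therefore
\begin{align*}
\p_x\left(Y_{\tau_U}\in D\right)\le \p_x\left(|Y_{\tau_U}-x_0|\ge r\right).
\end{align*}

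Now I would simply apply Lemma~\ref{st5n} with $S=\tau_U$ and the same $x_0$, $r$, $x$: since $x\in B(x_0,r/2)$ and $\E_x[\tau_U]<\infty$, the lemma gives
\begin{align*}
\p_x\left(|Y_{\tau_U}-x_0|\ge r\right)\le C_{\ref{st5n}}\,\frac{\E_x[\tau_U]}{\phi(r)},
\end{align*}
and combining the last two displays yields \eqref{e:st5} with $C_{\ref{st5}}=C_{\ref{st5n}}$. This also shows the asserted dependence of the constant on $\phi,\eta,L_0,L_3$ only, since $C_{\ref{st5n}}$ depends on exactly these.

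There is essentially no hard step here; the only mild subtlety is the set-theoretic argument that the exit point from $U$ cannot lie in $D\cap B(x_0,r)$, which relies on $D\cap B(x_0,r)\subset U$ and on the fact that $Y$ exits $U$ through $\partial U\cup U^c$ — a standard property of the first exit time from an open set for a Hunt process (quasi-left-continuity is not even needed, only that $Y_{\tau_U}\notin U$ when $\tau_U<\infty$, together with the observation that points of $D\cap B(x_0,r)$ are interior to $U$). Everything else is an immediate substitution into Lemma~\ref{st5n}.
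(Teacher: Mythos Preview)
Your proposal is correct and follows essentially the same approach as the paper: the paper simply notes $D\setminus U\subset B(x_0,r)^c$ (which is your set-theoretic observation that $Y_{\tau_U}\in D$ forces $|Y_{\tau_U}-x_0|\ge r$) and then applies Lemma~\ref{st5n} with $S=\tau_U$, obtaining $C_{\ref{st5}}=C_{\ref{st5n}}$.
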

\begin{proof} 
Since $D\setminus U\subset B(x_0, r)^c$, by Lemma \ref{st5n} we have that for $x \in D\cap B(x_0, r/2)$
\begin{align*}
\p_x\left(Y_{\tau_U} \in D\right)\,\le\,    \p_x\(|Y_{\tau_U}-x_0|\ge r\)      \,\le\,    C_{\ref{st5n}} \frac{ \E_x[\tau_U]}{\phi(r)}.
\end{align*}
\end{proof}

\section{Analysis on $Z$}\label{prZ}

Recall that $Z$ is a pure jump isotropic unimodal L\'{e}vy process with L\'{e}vy measure $\nu(|x|)dx$. Moreover, we assume {\bf(SD)} holds in this section.
The L\'evy-Khintchine (characteristic) exponent of $Z$ has the form
\begin{equation}\label{characFun}
\psi(|\xi|)=\int_\Rd \left(1- \cos (\xi \cdot x) \right) \nu(|x|)dx, \quad \xi\in\Rd.
\end{equation}

Let $Z^d$ be the last coordinate of $Z$ and  $M_t=\sup_{s\le t}Z^d_s$ and let $L_t$ be the local time {at $0$} for
{$M_t-Z^d_t$}, the last coordinate of $Z$ reflected at the supremum.
We consider its right-continuous inverse, $L^{-1}_s$ which is called the ascending ladder time process for $X^1_t$. 
Define the ascending ladder-height process as
$H_s = Z_{L^{-1}_s}^{d} = M_{L^{-1}_s}$.
The Laplace exponent of $H_s$ is
\begin{equation*}\label{exL}
\kappa(\xi)= \exp\left\{\frac{1}{\pi} \int_0^\infty \frac{ \log {\psi}(\theta \xi)}{1 + \theta^2} \, d\theta\right\}, \quad \xi\ge 0.
\end{equation*}
(See \cite[Corollary 9.7]{MR0400406}.)
The renewal function $V$ of the ascending ladder-height process $H$ is defined as
\begin{equation*}\label{e:defV}
V(x) = \int_0^{\infty}\p(H_s \le x)ds, \quad
x {\in \R}.
\end{equation*}
then $V(x)=0$ if $x<0$ and $V$ is non-decreasing.  Also $V$ is subadditive (see \cite[p.74]{MR1406564}), that is,
\begin{equation}\label{subad}
 V(x+y)\le V(x)+V(y), \quad x,y \in \R,
\end{equation}
 and $V(\infty)=\infty$.
Since {the distribution of $Z^d_t$ is absolutely continuous for every $t>0$ the resolvent measures of $Z^d_t$ as well}, (see \cite[Theorem 6]{MR0341626}), 
it follows by \cite[Theorem 2]{MR573292}
that $V(x)$ is absolutely continuous and harmonic on $(0,\infty)$ for the process $Z_t^d$.
Also, $V^\prime$ is a positive harmonic function for $Z_t^d$ on $(0,\infty)$, hence $V$ is actually (strictly) increasing.

For $r>0$, define Pruitt's function $h(r)=\int_{\R^d} \(1 \wedge {|z|^2}r^{-2}\)\nu(dz)$ (e.g., see  \cite {MR632968}). By \cite[Corollary 3]{2013arXiv1305.0976B} and \cite[Proposition 2.4]{BGR1} (see also \cite[p.74]{MR1406564}), we first note that
\begin{equation*}\label{compVhp}
h(r)\asymp [V(r)]^{-2}\asymp \kappa(r^{-1})\asymp \psi(r^{-1})\qquad\mbox{ for any }\,\, r>0.
\end{equation*}
Clearly,  {\bf(WS)} implies that 
$s\to \phi(s^{-1})^{-1}$ also satisfies  {\bf(WS)}, that is, using the notation in \cite{2013arXiv1305.0976B}, 
$s\to \phi(s^{-1})^{-1}\in\WLSC{\la}{\lC}{0}\,\cap \,\WUSC{\ua}{\uC}{0}$.
So by \eqref{d:nu} and \cite[Proposition 28]{2013arXiv1305.0976B}, we have that 
$$\psi(r)\asymp \phi(r^{-1})^{-1}\qquad \mbox{ for any }\,\,  r>0.$$
Combining these observations, we conclude that 
\begin{align}\label{e:com}
V(r)\asymp [\phi(r)]^{1/2} \quad \mbox{ and} \quad 
\nu(r) \asymp [V(r)]^{-2} r^{-d}
\qquad \mbox{ for any } \,\, r>0.
\end{align}
So by {\bf(WS)},  there exists $C_V:=(\lC, \uC, d)>1$ such that 
\begin{align}\label{V:sc}
C_V^{-1} \left(\frac{R}{r}\right)^{\la/2}\le \frac{V(R)}{V(r)}\le C_V \left(\frac{R}{r}\right)^{\ua/2}\qquad \mbox{ for any } 0<r\le R.
\end{align}

Define $w(x):=V((x_d)^+)$ and $\H:=\{x=(\widetilde{x},x_d) \in \R^d:x_d>0 \}$. Since the renewal function $V$ is harmonic on $(0, \infty)$ for $Z^d$, by the strong Markov Property $w$ is harmonic in $\H$ with respect to $Z$.
\begin{prop}\label{VbisEst} 
$x \to V(x)$ is twice-differentiable  for any $x>0$, and there exists $C_{\ref{VbisEst}}>0$ such that $$|V''(x)|\leq C_{\ref{VbisEst}}\frac{V'(x)}{x\wedge 1}\quad \text{and}\quad V'(x)\leq
C_{\ref{VbisEst}}\frac{V(x)}{x\wedge 1}, \quad x>0.$$
\end{prop}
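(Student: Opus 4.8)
The plan is to obtain a representation of $V''$ in terms of the Lévy measure $\nu$ by differentiating the harmonicity identity for $V'$ with respect to $Z^d_t$, and then to control the resulting integral using the structural hypothesis {\bf(SD)}, namely that $r\mapsto -\nu'(r)/r$ is decreasing. First I would record what is already available: by the discussion preceding the proposition, $V'$ is a positive harmonic function for the one-dimensional process $Z^d$ on $(0,\infty)$, and $V$ (hence also $V'$) is absolutely continuous. The one-dimensional Lévy measure of $Z^d$ has a density $\nu_1(u)=\int_{\R^{d-1}}\nu(|(\tilde z,u)|)\,d\tilde z$, and {\bf(SD)} passes to $\nu_1$ in the sense that $u\mapsto -\nu_1'(u)/u$ is again decreasing (integrating the monotonicity of $-\nu'(r)/r$ over the transverse variables), which is the form in which it will actually be used. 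Harmonicity of $V'$ for $Z^d$ together with regularity of the resolvent gives, for $x>0$, an identity of the form $0=\int_{\R}\bigl(V'(x+u)-V'(x)-u (V')'(x)\ind_{|u|<x}\bigr)\nu_1(u)\,du$ (the generator of $Z^d$ applied to $V'$ vanishes), and differentiating this in $x$ — justified because the integrand and its $x$-derivative are dominated near $u=0$ using $\int (1\wedge u^2)\nu_1(u)\,du<\infty$ from {\bf(WS)} — yields $V''(x)$ expressed as an integral against $\nu_1$ and $\nu_1'$.

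The core estimates are then two. For the bound $V'(x)\le C V(x)/(x\wedge 1)$: this follows from the scaling \eqref{V:sc} by a standard interior-gradient argument for the positive harmonic function $V'$ of $Z^d$, or more directly since $V$ is concave-like with $V(2x)\le C_V 2^{\ua/2}V(x)$, one gets $V'(x)\le c\,(V(2x)-V(x))/x\le c' V(x)/x$ for $x\le 1$, while for $x\ge 1$ one uses $V'(x)\le V'(1)\cdot$(slow growth) — here I would lean on the subadditivity \eqref{subad} and monotonicity of $V'$ (already noted: $V'$ is decreasing, since $V''$ will turn out to have a sign near the relevant range, or at least $V'$ is bounded on $[1,\infty)$ by $V'(1)$ because $V'$ is a decreasing positive harmonic function). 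For the bound $|V''(x)|\le C V'(x)/(x\wedge 1)$: starting from the integral representation of $V''$, split the domain into $|u|<x/2$, where a Taylor expansion plus the bound on the second derivative of $V'$ on the annulus (bootstrapped, or circumvented by writing the increment $V'(x+u)-V'(x)$ and using monotonicity of $-\nu_1'/u$ to integrate by parts) contributes $O(V'(x)h(x))$, and $|u|\ge x/2$, where one uses $|V'(x+u)|\le V'(x/2)\le c V'(x)$ (decrease of $V'$ plus doubling) together with $\int_{|u|\ge x/2}|\nu_1'(u)|\,du\lesssim x^{-1}\int_{|u|\ge x/2}(-\nu_1'(u)/u)\,|u|\,du\lesssim x^{-1}\nu_1(x/2)\,x \asymp$ the right power, all comparable to $h(x)\asymp \psi(x^{-1})\asymp V(x)^{-2}\asymp V'(x)/(xV(x))\cdot$(something). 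Matching the powers via \eqref{e:com}, \eqref{V:sc} and $V'(x)\asymp V(x)/x$ collapses everything to $V'(x)/(x\wedge 1)$ for $x\le 1$, and for $x\ge 1$ the tail of $\nu_1$ and the boundedness of $V',V''$ on $[1,\infty)$ give the claim with the $x\wedge 1$ cut at $1$.

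The main obstacle I anticipate is the rigorous justification that {\bf(SD)} — a statement about the \emph{second} derivative structure of $V$ via $\nu$ — actually delivers twice-differentiability of $V$ and the pointwise control of $V''$, rather than merely an $L^1_{loc}$ or distributional statement. The key technical device is that $-\nu'(r)/r$ being decreasing lets one integrate by parts in the singular integral defining $\gener_Z V'$ so that the worst term $\int u\,\nu_1'(u)\,du$ near $0$ is handled by the monotone function $u\mapsto -\nu_1'(u)/u$ (turning a naively divergent-looking expression into $\int (-\nu_1'(u)/u)u^2\,du$, which is finite and comparable to $h(x)$), and this same monotonicity upgrades weak differentiability of $V'$ to a genuine pointwise second derivative with the stated bound. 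Once the representation $V''(x)=\text{(integral against }\nu_1,\nu_1'\text{)}$ is in hand, the remaining work is bookkeeping with the scaling relations \eqref{e:com}–\eqref{V:sc} and the comparability $h\asymp\psi\asymp V^{-2}$ recorded just above the proposition, which is routine.
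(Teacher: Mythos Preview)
Your approach differs substantially from the paper's, and as written it has real gaps.

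The paper's proof is a two-line application of an external gradient estimate for nonnegative harmonic functions of $Z$ (Theorem~1.1 of \cite{TKMR}, whose hypothesis {\bf(A)} is exactly {\bf(SD)}). That theorem says: if $u\ge 0$ is harmonic in $\H$ for $Z$, then $|\partial_{x_d} u(x)|\le C\,u(x)/(x_d\wedge 1)$. One applies it first to $w(x)=V((x_d)^+)$, giving $V'\le C\,V/(x\wedge 1)$, and then to $f(x)=V'((x_d)^+)$, which is also nonnegative and harmonic in $\H$ (this was noted just before the proposition), giving $|V''|\le C\,V'/(x\wedge 1)$. The twice-differentiability of $V$ comes for free from that same theorem, since it delivers a classical derivative of the harmonic function $V'$.

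Your sketch instead tries to re-derive such a gradient bound from scratch via the generator identity $\gener_{Z^d}V'=0$. That is not unreasonable in spirit---the cited paper does something of this kind internally---but several steps in your outline are circular or unjustified:
\begin{itemize}
\item You differentiate the identity $\gener_{Z^d}V'=0$ in $x$ to obtain a formula for $V''$, but the very thing to be proved is that $V''$ exists pointwise. You need an a~priori regularity step (e.g.\ a mollification or Poisson-integral representation) before you can differentiate under the integral.
\item You invoke ``$V'$ is decreasing'' to control the tail piece $|u|\ge x/2$, but no monotonicity of $V'$ has been established; you even note this is to be deduced from the sign of $V''$, which is what you are proving. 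For general $\phi$ satisfying {\bf(WS)} and {\bf(SD)} there is no reason $V'$ should be monotone, and the paper never uses this.
\item The inequality $V'(x)\le c\,(V(2x)-V(x))/x$ is not a consequence of the scaling \eqref{V:sc} or subadditivity alone; it again presupposes some control on oscillation of $V'$ that you have not secured.
\end{itemize}
Each of these can be repaired, but doing so amounts to reproving the gradient estimate of \cite{TKMR}. The efficient route---and the one the paper takes---is simply to cite that result and apply it twice, once to $V$ and once to $V'$.
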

\begin{proof}
Let $f((\wt y,x_d))=V'((x_d)^+)$ for $\wt y\in \R^{d-1}$. Then $f$ is harmonic in $\H$. 
The assumption \cite[{\bf (A)}]{TKMR} is satisfied by {\bf(SD)}. Hence, by Theorem 1.1 therein, we get for any $x>0$
$$\left|\frac{\partial}{\partial x_d}f((\wt 0,x))\right|\leq C_{\ref{VbisEst}}\frac{f((\wt 0,x))}{x\wedge 1}\quad \text{and}\quad \frac{\partial}{\partial x_d}w(\wt 0,x)\leq
C_{\ref{VbisEst}}\frac{w(\wt 0,x)}{x\wedge 1}.$$
These imply the claim of proposition, because $V(x)=w(\wt 0,x)$ and $V'(x)=f(\wt 0,x)$, $x>0$.

\end{proof}

\begin{prop}\label{st2}
For $\lambda>0$, there exists $C_{\ref{st2}}=C_{\ref{st2}}(d,\lambda)>0$ such that for any $r>0$, we have
\begin{align}
\sup_{\{x\in \R^d\, :\, 0<x_d\le \lambda r\}}\int_{B(x, r)^c} w(y)\nu(|x-y|)dy<  \frac{C_{\ref{st2}}}{V(r)}.
\end{align}
\end{prop}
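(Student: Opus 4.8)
The plan is to split the integral over $B(x,r)^c$ according to the size of $|x-y|$ and the position of $y$ relative to the half-space, exploiting the subadditivity and polynomial growth of $V$ together with the scaling $\nu(s)\asymp V(s)^{-2}s^{-d}$ from \eqref{e:com}. First I would fix $x$ with $0<x_d\le\lambda r$; since $w(y)=V((y_d)^+)$ vanishes when $y_d\le 0$, only the region $\{y_d>0\}$ contributes. For such $y$ we have $|y_d|\le |y_d-x_d|+x_d\le |x-y|+\lambda r$, so by monotonicity and subadditivity of $V$,
\begin{align*}
w(y)=V(y_d)\le V(|x-y|+\lambda r)\le V(|x-y|)+V(\lambda r)\le V(|x-y|)+(1+\lambda)^{\ua/2}C_V\,V(r),
\end{align*}
using \eqref{V:sc} for the last bound. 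This reduces the problem to estimating two integrals: $\int_{B(x,r)^c}V(|x-y|)\nu(|x-y|)dy$ and $V(r)\int_{B(x,r)^c}\nu(|x-y|)dy$.

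Next I would handle each piece by passing to polar coordinates and invoking \eqref{e:com}. For the second, $\int_{B(x,r)^c}\nu(|x-y|)dy\asymp\int_r^\infty \nu(s)s^{d-1}ds\asymp\int_r^\infty V(s)^{-2}s^{-1}ds$, and since $V(s)^{-2}\le C_V^{-2}(s/r)^{-\la}V(r)^{-2}$ for $s\ge r$ by \eqref{V:sc}, this integral is bounded by $c\,V(r)^{-2}\int_r^\infty (s/r)^{-\la}s^{-1}ds=c\,\la^{-1}V(r)^{-2}$; multiplying by $V(r)$ gives a term of order $V(r)^{-1}$, as desired. For the first piece, $\int_{B(x,r)^c}V(|x-y|)\nu(|x-y|)dy\asymp\int_r^\infty V(s)^{-1}s^{-1}ds$, and again $V(s)^{-1}\le C_V^{-1}(s/r)^{-\la/2}V(r)^{-1}$ for $s\ge r$, so this is bounded by $c\,(\la/2)^{-1}V(r)^{-1}$. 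Adding the two contributions yields the claimed bound $C_{\ref{st2}}/V(r)$ with $C_{\ref{st2}}$ depending only on $d$ and $\lambda$ (through the constants $\la,\lC,\ua,\uC$ which are fixed).

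The only mildly delicate point is making the comparison $\int_{B(x,r)^c}\nu(|x-y|)dy\asymp\int_r^\infty\nu(s)s^{d-1}ds$ fully rigorous; this is immediate from the substitution $y\mapsto x-y$ and integration in spherical shells, using that $\nu$ is a radial decreasing density, so there is really no obstacle there. I expect the main (very modest) obstacle to be bookkeeping the dependence of constants: one must check that all implicit constants coming from \eqref{e:com}, \eqref{V:sc}, and the surface-area factor depend only on $d$ and $\lambda$ and not on $r$, which follows because {\bf(WS)} fixes $\la,\ua,\lC,\uC$ once and for all. No integrability issue arises at infinity because $\la>0$ forces convergence of both $\int_r^\infty(s/r)^{-\la}s^{-1}ds$ and $\int_r^\infty(s/r)^{-\la/2}s^{-1}ds$.
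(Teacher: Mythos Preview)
Your proof is correct and follows essentially the same route as the paper: both arguments use subadditivity of $V$ to bound $w(y)$ by $V(|x-y|)$ plus a term of order $V(r)$, then control the two resulting integrals over $B(x,r)^c$. The only difference is cosmetic---the paper cites \cite[(2.23) and Lemma~3.5]{BGR1} for the bounds $\int_{|z|>r}\nu(|z|)\,dz\lesssim V(r)^{-2}$ and $\int_{|z|>r}V(|z|)\nu(|z|)\,dz\lesssim V(r)^{-1}$, whereas you derive them directly from \eqref{e:com} and \eqref{V:sc} (note the minor typo: the constants in your scaling inequalities for $V(s)^{-1}$ and $V(s)^{-2}$ should be $C_V$ and $C_V^2$, not $C_V^{-1}$ and $C_V^{-2}$).
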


\begin{proof}
Since $w(x+z)=V(x_d+z_d)\le V(x_d)+V(|z|)$ for $x_d>0$, it follows that
\begin{align*}
\int_{B(x, r)^c} w(y)\nu(|x-y|)dy&=\int_{B(0, r)^c} w(x+z)\nu(|z|)dz\\
&\leq V(x_d)\int_{B(0, r)^c}\nu(|z|)dz+\int_{B(0, r)^c} V(|z|)\nu(|z|)dz.
\end{align*}
By \cite[(2.23) and Lemma 3.5]{BGR1}, we have that 
$$\sup_{\{x\in \R^d\, :\, 0<x_d\le \lambda r\}}\int_{B(x, r)^c} w(y)\nu(|x-y|)dy\leq c_1 \(\frac{V(\lambda r)}{[V(r)]^2}+\frac{1}{V(r)}\).$$
Since $V$ is subadditive,  $V(\lambda r)\le (\lambda +1)V(r)$  for any $\lambda>0$, and therefore we conclude the result.
\end{proof}

For any function $f:\R^d\to \R$ and $x\in \R^d$, we define an operator as follows:
\begin{align*}
\gener_Z f(x)&:=P.V.\int_{\Rd} (f(y)-f(x))\nu(|x-y|)dy:=\lim_{\varepsilon\downarrow 0}\gener_Z^{\eps} f(x),\\
{\cal D} (\gener_Z)&:=\left\{f\in C^{2}(\R^d):P.V.\int_{\Rd} (f(y)-f(x))\nu(|x-y|)dy\,\mbox{ exists and is finite.}\right\},
\end{align*}
where
\begin{align}
\label{e:gZ}
\gener_Z^{\eps} f(x):=\int_{B(x, \varepsilon)^c} (f(y)-f(x))\nu(|x-y|)dy.
\end{align}

Recall that $C_0^2(\Rd)$ be the collection of $C^2$ functions in $\R^d$ vanishing at infinity. It is well known that $C_0^2(\R^d)\subset {\cal D} (\gener_Z)$ and that, by the rotational symmetry of $Z$,
$A_Z|_{C_0^2(\Rd)}=\gener_Z|_{C_0^2(\Rd)}$ where  $A_Z$ is the infinitesimal generator of $Z$(e.g. \cite[Theorem 31.5]{MR1739520}).
Hence, we see that Dynkin formula holds for $\gener_Z$: for each $g \in C_0^{2}(\R^d)$ and any bounded open subset $U$ of $\R^d$ we have
\begin{equation} \label{n_Dynkin_formula}
\E_x\int_0^{\tau_U}  {\gener_Z } g(Z_t) dt =\E_x[g(Z_{\tau_U})]- g(x).
\end{equation}

\begin{thm}\label{st3}
For any $x\in \H$, $\gener_{Z} w(x)$ is well-defined and $\gener_Z w(x)=0$.
\end{thm}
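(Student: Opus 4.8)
The plan is to promote the harmonicity of $w$ in $\H$ with respect to $Z$ (already recorded just before Proposition~\ref{VbisEst}), which is a probabilistic statement, to the pointwise identity $\gener_Z w=0$, after first checking that $\gener_Z w$ is meaningful and continuous on $\H$.

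First I would establish well-definedness and continuity of $\gener_Z w$ on $\H$. Fixing $x\in\H$ and $r:=x_d/2$, I split $\gener_Z w(x)$ into the parts over $B(x,r)$ and over $B(x,r)^c$. On $B(x,r)\subset\H$ one has $w(y)=V(y_d)$, which by Proposition~\ref{VbisEst} is twice differentiable with bounded second derivative on $B(x,r)$, so $|w(y)-w(x)-\nabla w(x)\cdot(y-x)|\le c\,|y-x|^2$ there; together with $\int_{|z|<r}|z|^2\nu(|z|)\,dz<\infty$ (from {\bf(WS)}), the principal-value integral over $B(x,r)$ converges absolutely. Over the complement, $\int_{B(x,r)^c}w(y)\,\nu(|x-y|)\,dy<\infty$ by Proposition~\ref{st2} (applied with $\lambda=2$, since $x_d=2r$) and $w(x)\int_{B(x,r)^c}\nu(|x-y|)\,dy<\infty$ because $\nu$ is integrable off the origin. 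Running these two bounds uniformly over compact subsets of $\H$ and invoking dominated convergence gives continuity of $\gener_Z w$ on $\H$.

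The obstacle for the vanishing is that $w$ is unbounded, hence not in $C_0^2(\R^d)$, so the Dynkin formula \eqref{n_Dynkin_formula} cannot be applied to $w$ directly. I would localize: fix $x\in\H$, take $0<\eps<x_d/4$, put $B:=B(x,\eps)$, choose $\zeta\in C_c^\infty(\R^d)$ with $0\le\zeta\le1$, $\zeta\equiv1$ on $B(x,\eps)$ and $\supp\zeta\subset B(x,2\eps)$, and set $g:=w\zeta$. Since $w$ is $C^2$ on $B(x,3\eps)\subset\H$ (being $V(y_d)$ there), $g\in C_c^2(\R^d)\subset C_0^2(\R^d)$, so \eqref{n_Dynkin_formula} holds for $g$ on $B$. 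Write $g_2:=g-w=w(\zeta-1)$, which vanishes on $B(x,\eps)$ and equals $-w$ on $B(x,2\eps)^c$. Two facts make the truncation error cancel exactly: (i) for $t<\tau_B$ one has $Z_t\in B$, where $\zeta\equiv1$, so $g(Z_t)=w(Z_t)$, $g_2$ vanishes near $Z_t$, and (using Proposition~\ref{st2} for convergence of the $g_2$-part, which needs no principal value, together with boundedness of $\gener_Z g$ and of $\gener_Z w$ on $\bar B$ to control expectations)
\[
\gener_Z g(Z_t)=\gener_Z w(Z_t)+\int_{\R^d}g_2(z)\,\nu(|Z_t-z|)\,dz ;
\]
(ii) since $g_2\equiv0$ on $B$ and $Z$ leaves $B$ by a jump, the Lévy system (Ikeda--Watanabe) formula gives $\E_x[g_2(Z_{\tau_B})]=\E_x\int_0^{\tau_B}\int_{\R^d}g_2(z)\,\nu(|Z_t-z|)\,dz\,dt$, all quantities being finite because $\E_x[w(Z_{\tau_B})]=w(x)<\infty$ by harmonicity ($\bar B\subset\H$) and $g$ is bounded.

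Inserting (i) and (ii) into \eqref{n_Dynkin_formula} for $g$ and using $g=w+g_2$ together with $g(x)=w(x)$, the two truncation terms cancel, leaving the clean identity $\E_x[w(Z_{\tau_B})]-w(x)=\E_x\int_0^{\tau_B}\gener_Z w(Z_t)\,dt$ for every $\eps\in(0,x_d/4)$. Harmonicity of $w$ in $\H$ makes the left side vanish, so $\E_x\int_0^{\tau_B}\gener_Z w(Z_t)\,dt=0$ for all such $\eps$; since $\gener_Z w$ is continuous on $\H$ and $Z_t$ stays in $B(x,\eps)$ up to $\tau_B$, letting $\eps\downarrow0$ (equivalently: a fixed nonzero sign of $\gener_Z w$ near $x$ would force the integral to be nonzero) yields $\gener_Z w(x)=0$. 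The only real work is justifying the localization rigorously — convergence of the near-$x$ and far-field integrals (Propositions~\ref{VbisEst} and~\ref{st2}), applicability of the Lévy system formula to the possibly unbounded $g_2$, and continuity of $\gener_Z w$ on $\H$ — and this is where I expect essentially all of the effort to go.
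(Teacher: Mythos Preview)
Your argument is correct and takes a genuinely different route from the paper. Both proofs first establish that $\gener_Z w$ is well defined and continuous on $\H$ via essentially the same ingredients (Proposition~\ref{VbisEst} for the near part, Proposition~\ref{st2} for the far part). They diverge at the vanishing step. The paper proceeds analytically: it invokes \cite[Lemma~2.3 and Theorem~2.11(ii)]{MR2515419} to turn probabilistic harmonicity of $w$ into the weak identity $\int\!\!\int (w(y)-w(x))(f(y)-f(x))\,\nu(|x-y|)\,dx\,dy=0$ for every $f\in C_c^2(\H)$, checks absolute convergence, passes the limit $\eps\downarrow 0$ through by Fubini and locally uniform convergence of $\gener_Z^\eps w$, and concludes $\int_\H f\,\gener_Z w=0$ for all test $f$, hence $\gener_Z w=0$ by continuity. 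Your approach instead localizes $w$ by a smooth cutoff, applies the Dynkin formula \eqref{n_Dynkin_formula} to $g=w\zeta\in C_c^2$, and uses the L\'evy system to cancel the truncation term $g_2=w(\zeta-1)$ exactly, yielding a clean Dynkin identity for $w$ itself on small balls; harmonicity then forces $\E_x\int_0^{\tau_B}\gener_Z w(Z_t)\,dt=0$, and continuity of $\gener_Z w$ plus $\tau_B\downarrow 0$ gives the pointwise vanishing. Your route is more self-contained (no appeal to \cite{MR2515419}) and stays in the probabilistic language used elsewhere in the paper; the paper's route is shorter once that reference is available and makes the Dirichlet-form structure explicit.

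Two small technical points worth tightening in your write-up. First, take $\zeta\equiv 1$ on a ball strictly larger than $\overline{B}$ (say $B(x,3\eps/2)$ with $B=B(x,\eps)$), so that $g_2$ vanishes on a uniform neighborhood of $\overline{B}$; this makes the L\'evy system step clean regardless of whether $Z$ exits $B$ by a jump, and gives a uniform ``$g_2$ vanishes near $Z_t$'' for $t<\tau_B$. Second, the applicability of the L\'evy system to $g_2$ is unproblematic because $g_2=w(\zeta-1)\le 0$ has a fixed sign, so one applies it to $-g_2\ge 0$; finiteness follows from $\E_x[w(Z_{\tau_B})]=w(x)<\infty$, and the equivalent time-integral form is bounded because $\int g_2(z)\,\nu(|Z_t-z|)\,dz=\gener_Z g(Z_t)-\gener_Z w(Z_t)$ is bounded on $\overline{B}\subset\H$.
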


\begin{proof}
By subadditivity of $V$, $|w(y)-w(x)|\leq V(|y_d-x_d|)\leq V(|x-y|)$ for $x\in \H$.
By \cite[Lemma 3.5]{BGR1}, it follows that for any  $\varepsilon\in (0, 1/2)$ 
\begin{align}\label{supgen}
&\left|\int_{B(x, \varepsilon)^c}(w(y)-w(x))\nu(|x-y|)dy\right|\le \int_{B(0, \varepsilon)^c} V(|z|)\nu(|z|)dz<\frac{c_1 }{V(\varepsilon)}<\infty.
\end{align}
Thus $\gener_Z^{\eps} w(x)$ is well defined in $\H$ and
$$\gener_Z^\varepsilon w(x)=\int_{B(x, \varepsilon)^c}(w(y)-w(x)-{\bf 1}_{\{|x-y|<1\}}(x-y)\cdot\nabla w(x))\nu(|x-y|)dy.$$
Since Proposition \ref{VbisEst} implies $V^{''}(s)$ exists and so $w$ is twice differentiable in $\H$,  we have that 
$$x\mapsto \int_{B(x, \varepsilon)}(w(y)-w(x)-(x-y)\cdot\nabla w(x))\nu(|x-y|)dy$$
converges to $0$ locally uniformly in $\H$ as $\varepsilon \downarrow 0$. 
From \eqref{supgen}, we see that  $\gener_{Z}^\varepsilon w(x)$ converges to 
$$\gener_Z w(x)=\int_{\Rd}(w(y)-w(x)-{\bf 1}_{\{|x-y|<1\}}(x-y)\cdot\nabla w(x))\nu(|x-y|)dy$$
locally uniformly in $\H$ as $\varepsilon \downarrow 0 $.

For every $x\in \H$,  $z\in B(x, (\varepsilon\wedge x_d)/2)$ and $y\in B(z, \varepsilon)^c$, it holds that $|y-z|/2\le |x-y|\le 3|y-z|/2$.
Since $r\to \nu(r)$ is decreasing, using Proposition \ref{VbisEst} 
\begin{align*}
&{\bf 1}_{\{|y-z|>\varepsilon\}}\left|w(y)-w(z)-{\bf 1}_{\{|y-z|<1\}}(y-z)\cdot\nabla w(z)\right|\nu(|y-z|)\nn\\
&\leq  c_2\left(\sup_{\eps/2<s <x_d+2} V^{\prime\prime}(s)\right)|x-y|^2 {\bf 1}_{\{\varepsilon/2<|x-y|<2\}}\nu(|x-y|/2)\nn\\
&\qquad +(w(y)+V(x_d+1)) {\bf 1}_{\{|x-y|>1/2\}}\nu(|x-y|/2).
\end{align*}
So applying the dominated convergence theorem with Proposition \ref{st2} and the fact that $\nu$ is a L\'evy density, we obtain that  $x\to \gener_Z^{\eps}w(x)$ is continuous for each $\eps$. 
Therefore, the function $\gener_Z w(x)$ is continuous in $\H$.

Let $U_1$ and $U_2$ be relatively compact open subsets of $\H$ satisfying $\ol{U_1}\subset U_2\subset \ol{U_2}\subset \H$ and $0<r_0:=\dist (U_1, U_2^c)<1 $. By Proposition \ref{st2}, 
\begin{align}\label{h1}
\int_{U_1}\int_{U_2^c} w(y)\nu(|x-y|)dydx \le& \,\,|U_1| \sup_{x\in U_{1}}\int_{U_2^c} w(y) \nu(|x-y|)dy\nn\\
\le& \,\,|U_1| \sup_{x\in U_1}\int_{B(x, r_0)^c} w(y) \nu(|x-y|)dy<\infty.
\end{align}
Since $w$ is harmonic, $w(Z_{\tau_{U_1}})\in L^1(\p_x)$ and 
\begin{align}\label{h2}
\sup_{x\in U_1} \E_x[{\bf 1}_{U_2^c}(Z_{\tau_{U_1}})w(Z_{\tau_{U_1}})]\le \sup_{x\in U_1} \E_x[w(Z_{\tau_{U_1}})]=\sup_{x\in{U_1}} w(x)<\infty.
\end{align}
From \eqref{h1} and \eqref{h2}, the conditions \cite[(2.4), (2.6)]{MR2515419} hold and by \cite[Lemma 2.3, Theorem 2.11(ii)]{MR2515419}, we have that for any $f\in C_c^2(\H)$,
\begin{align}\label{h3}
0=\int_{\Rd}\int_{\Rd} (w(y)-w(x))(f(y)-f(x)) \nu(|x-y|) dxdy.
\end{align}
For $f\in C_c^2(\H)$ with $\supp(f)\subset\ol{U_1}\subset U_2\subset \ol{U_2}\subset \H$,
\begin{align}\label{h4}
&\int_{\Rd}\int_{\Rd} |w(y)-w(x)||f(y)-f(x)| \nu(|x-y|) dxdy\nn\\
=&\int_{U_1}\int_{U_2} |w(y)-w(x)||f(y)-f(x)| \nu(|x-y|) dxdy+2\int_{U_1}\int_{U_2^c} |w(y)-w(x)||f(y)| \nu(|x-y|) dxdy\nn\\
\le& c_2\int_{U_1\times U_2}|x-y|^2 \nu(|x-y|) dxdy+2||f||_{\infty}\int_{U_1}\int_{U_2^c} w(y) \nu(|x-y|) dxdy\nn\\
&\qquad +2||f||_{\infty}|U_1|\left(\sup_{y\in U_1}w(y)\right)\int_{U_1}\int_{U_2^c} \nu(|x-y|) dxdy,
\end{align}
and it is finite from \eqref{h1} and the fact that $\nu$ is a L\'evy density. 
Applying the dominated convergence theorem with \eqref{h3} and  \eqref{h4}, for any $f\in C_c^2(\H)$, we have
\begin{align*}
0=&\lim_{\eps\downarrow 0}\int_{\{(x,y)\in\Rd\times\Rd: |x-y|>\eps\}} (w(y)-w(x))(f(y)-f(x)) \nu(|x-y|) dxdy\\
=&-2 \lim_{\eps\downarrow 0}\int_{\H} f(x)\left(\int_{\{y\in\Rd: |x-y|>\eps\}} (w(y)-w(x)) \nu(|x-y|)dy\right)dx=-2\int_{\H}f(x)\gener_Zw(x)dx.
\end{align*}
We have used Fubini's theorem and the fact that $\gener_Z^{\eps}w\to \gener_Z w$ converges uniformly on the support $f$. Hence, by the continuity of $\gener_Z w$, we have $\gener_Z w(x)=0$ in $\H$.
\end{proof}

\begin{prop}\label{g:st1}
Suppose that $D$ is a $C^{1,1}$ open set in $\R^d$ with characteristics $(R_0, \Lambda)$.
For any $z\in \partial D$ and $r\le 1\wedge R_0$, we define 
$$h_r(y)=h_{r,z}(y):=V(\delta_D(y)){\bf 1}_{D\cap B(z, r)} (y).$$
There exists $C_{\ref{g:st1}}=C_{\ref{g:st1}}(\phi,  \Lambda, d)>0$ independent of $z$ such that $\gener_{Z} h$ is well-defined in $D\cap B(z, r/4)$ and 
\begin{align}\label{gener}
|\gener_{Z} h_r(x)|\le \frac{C_{\ref{g:st1}}}{V(r)}\,\qquad\qquad \mbox{ for all } x\in D\cap B(z, r/4). 
\end{align}
\end{prop}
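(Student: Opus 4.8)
The plan is to compare $h_r$ with the globally harmonic function $w$ from Theorem~\ref{st3} in a boundary coordinate system, and to bound the error terms using the $C^{1,\rho}$ regularity of $\partial D$ together with the second-derivative estimate for $V$ in Proposition~\ref{VbisEst}. Fix $z\in\partial D$ and work in the coordinate system $CS_z$, writing points as $y=(\wt y,y_d)$ with $D\cap B(z,R_0)=\{y_d>\varphi(\wt y)\}$. For $x\in D\cap B(z,r/4)$ define the ``flattened'' halfspace $\H_x$ tangent to $\partial D$ at the boundary point nearest $x$; more precisely, let $Q\in\partial D$ be a point realizing $\delta_D(x)$ and let $\H_x$ be the halfspace through $Q$ with inner normal equal to the unit normal of $\partial D$ at $Q$. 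Then $w_x(y):=V(\dist(y,\H_x^c))$ is harmonic for $Z$ by Theorem~\ref{st3} (after the rigid motion sending $\H_x$ to the standard $\H$), so $\gener_Z w_x(x)=0$, and it suffices to estimate $\gener_Z h_r(x)-\gener_Z w_x(x)$.

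First I would split this difference into three pieces: (i) the contribution of the region $B(z,r)^c$, where $h_r$ vanishes — this is controlled by $\int_{B(x,cr)^c} w_x(y)\nu(|x-y|)\,dy\le C_{\ref{st2}}/V(r)$ using Proposition~\ref{st2} and $\dist(x,\partial B(z,r))\asymp r$; (ii) the region away from the boundary, say $\{|x-y|\le 1\}\cap B(z,r)\cap\{\delta_D(y)\ge c\,r\ \text{or}\ \dist(y,\H_x^c)\ge c\,r\}$ — wait, more carefully, the ``near-diagonal'' region $B(x,\tfrac12\delta_D(x))$, where both $h_r$ and $w_x$ are $C^2$ and one uses the cancellation of the first-order term together with $|V''|\le C V'/(s\wedge 1)$ and $V'\le CV/(s\wedge 1)$ from Proposition~\ref{VbisEst} to get a bound $\lesssim \delta_D(x)^{2}\,\phi(\delta_D(x))^{-1}\,V(\delta_D(x))\,\delta_D(x)^{-2}\asymp V(r)^{-1}$ via {\bf(WS)}/\eqref{V:sc}; and (iii) the intermediate region where $|h_r(y)-w_x(y)|$ must be estimated pointwise. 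For (iii) the key geometric fact is that, for $y\in D\cap B(z,r)$ with $|x-y|$ not too large, the $C^{1,\rho}$ property gives $|\delta_D(y)-\dist(y,\H_x^c)|\le c\,|x-y|^{1+\rho}$ (the boundary deviates from its tangent plane at $Q$ by at most $\Lambda$ times distance$^{1+\rho}$, up to the localization radius); hence, by subadditivity and monotonicity of $V$ together with Proposition~\ref{VbisEst},
\begin{align*}
|h_r(y)-w_x(y)|\le V\!\bigl(|\delta_D(y)-\dist(y,\H_x^c)|\bigr)\le V\!\bigl(c|x-y|^{1+\rho}\bigr),
\end{align*}
and one integrates $V(c|x-y|^{1+\rho})\nu(|x-y|)$ over $|x-y|\le c'r$. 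By \eqref{e:com} and {\bf(WS)}, $V(s^{1+\rho})\nu(s)\asymp s^{(1+\rho)\a/2-\a-d}$ type behavior near $0$; the exponent condition $\rho>\ua/2$ is exactly what makes $\int_0 V(s^{1+\rho})\,s^{-1}\,\psi(s^{-1})\,ds<\infty$ converge at the origin (and scaling turns the answer into $\lesssim V(r)^{-1}$), while the upper-scaling bound handles the outer part of this integral out to radius $\asymp r$.

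The remaining technical point is to \emph{justify} that $\gener_Z h_r(x)$ exists as a principal value (and equals the absolutely convergent integral above), which follows by the same local $C^2$ argument as in the proof of Theorem~\ref{st3}: $h_r$ agrees with the smooth function $y\mapsto V(\delta_D(y))$ on a neighborhood of $x$ inside $D$ where $\delta_D$ is $C^{1,\rho}$ hence $V\circ\delta_D$ is twice differentiable with controlled Hessian, so the symmetric difference quotient defining the P.V.\ converges, and the tail is finite by step~(i). I expect the main obstacle to be step~(iii): getting the pointwise bound $|\delta_D(y)-\dist(y,\H_x^c)|\le c\,|x-y|^{1+\rho}$ uniformly for $y$ ranging over the relevant annulus — one has to keep track of the fact that the nearest boundary point to $y$ may differ from $Q$, so the clean ``graph lies within $|x-y|^{1+\rho}$ of its tangent plane'' estimate has to be combined with the triangle inequality and the bound $\delta_D(x)\le |x-y|+\delta_D(y)$ — and then verifying that the resulting singular integral converges precisely under $\rho>\ua/2$ via {\bf(WS)} and \eqref{V:sc}, with all constants independent of $z$.
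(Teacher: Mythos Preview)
Your overall strategy --- compare $h_r$ with the halfspace function $w_x(y)=V(\dist(y,\H_x^c))$ and use $\gener_Z w_x=0$ --- is exactly the paper's. However, step~(iii) has a genuine gap: the subadditivity bound $|h_r(y)-w_x(y)|\le V(c|x-y|^{1+\rho})$ is too crude for the integral to converge. In the $\alpha$-stable case $V(s)=s^{\alpha/2}$, $\nu(s)\asymp s^{-d-\alpha}$, your integral becomes $\int_0 s^{(1+\rho)\alpha/2-\alpha-1}\,ds=\int_0 s^{(\rho-1)\alpha/2-1}\,ds$, which diverges for every $\rho\le 1$. Splitting off the ball $B(x,\tfrac12\delta_D(x))$ does not help, since the bound must be uniform as $\delta_D(x)\to 0$. (Separately, your claim in~(ii) that $h_r$ is $C^2$ near $x$ is false when $\partial D$ is only $C^{1,\rho}$ with $\rho<1$: then $\delta_D$ is merely $C^{1,\rho}$ and $V\circ\delta_D$ need not be twice differentiable.)

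What the paper does instead is replace subadditivity by the mean value theorem: in the region $E=\{y_d>\wh\varphi(\wt y)\}$ one has $|\delta_D(y)-y_d|\le c\,|\wt y|^{1+\rho}$, so $|h_r(y)-w_x(y)|\le V'(\xi)\,c|\wt y|^{1+\rho}$ for some $\xi$ between $\delta_D(y)$ and $y_d$; the scale-invariant Harnack inequality for the harmonic function $V'$ then gives $V'(\xi)\asymp V'(y_d-\wh\varphi(\wt y))$. Integrating in the separate tangential/normal variables and using a rearrangement lemma reduces matters to $\int_0^r V(u)^{-1}u^{\rho-1}\,du$, which converges precisely when $\rho>\ua/2$ by \eqref{V:sc}. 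The thin ``wedge'' region $A=\{|y_d|\le \wh\varphi(\wt y)\}$, where one of $\delta_D(y),y_d$ may vanish and the MVT argument breaks down, is handled separately: there $h_r(y)+w_x(y)\le cV(|\wt y|)$ and the $y_d$-width is $\lesssim |\wt y|^{1+\rho}$, giving the same convergent integral. Once absolute convergence of $\int|h_r-w_x|\,\nu(|x-\cdot|)$ is established, the near-diagonal piece~(ii) becomes unnecessary and the P.V.\ definition of $\gener_Z h_r$ follows automatically.
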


\begin{proof}
Since the case of $d=1$ is easier, we give the proof only for $d\ge 2$. 
\kk{ Without loss of generality, we assume that $R_0<1$ and $\Lambda>1$.}
For $x\in D\cap B(z, r/4)$, let $z_x\in \partial D$ be the point satisfying $\delta_D(x)=|x-z_x|$.
Let $\varphi$ be a  $C^{1,1}$ function and $CS=CS_{z_x}$ be an orthonormal coordinate system with $z_x$ chosen so that $\varphi(\wt 0)=0$, $ \nabla\varphi(\wt 0)=(0, \dots, 0)$, $\| \nabla \varphi \|_\infty \leq \Lambda$, $|\nabla \varphi (\wt y)-\nabla \varphi (\wt z)| \leq \Lambda |\wt y-\wt z|$, and $x=(\wt{0}, x_d)$, $ D\cap B(z_x, R_0)=\{y=(\wt{y}, y_d)  \in B(0, R_0) \mbox{ in } CS: y_d>\varphi(\wt{y})\}$.
We fix the function $\varphi$ and the coordinate system $CS$, and we define a function $g_x(y)=V(\delta_{\H}(y))=V(y_d)$, where  $\H=\{y=(\widetilde{y}, y_d) \mbox{  in } CS :y_d>0\}$ is the half space in $CS$. 

Note that  $h_r(x)=g_x(x)$, and  that $\gener_Z (h_r- g_x)=\gener_Z h_r$ by Theorem \ref{st3}.
So, it suffices to show that $\gener_Z (h_r- g_x)$ is well defined and
that there exists a constant $c_1=c_1(\la, \lC, \ua, \uC,  \Lambda,  d)>0$ independent of  $x\in D\cap B(z, r/4)$ and $z \in \partial D$  such that
\begin{align}\label{e:claims}
\int_{D\cup \H}|h_r(y)-g_x(y)|\nu(|x-y|) dy\le c_1 V(r)^{-1}.
\end{align}

We define $\wh \varphi :B(\wt{0}, r)\to \R$ by $\wh \varphi(\wt{y}):=2\Lambda|\wt{y}|^{2}$.
Since $\nabla\varphi (\wt{0})=0$, by the mean value theorem we have 
$-\wh \varphi(\wt{y})\le\varphi(\wt{y})\le \wh \varphi(\wt{y})$  for any $y\in D\cap B(x, r/2)$
and so that
\begin{align*}
\{z=(\widetilde{z}, z_d)\in B(x, r/2) : z_d\ge&\,\, \wh{\varphi}(\widetilde z)\}\subset D \cap  B(x, r/2)\nn\\
&\subset \{z=(\widetilde{z}, z_d)\in B(x, r/2) : z_d\ge -\wh{\varphi}(\widetilde z)\}.
\end{align*}
Let $A:=\{y\in (D\cup \H)\cap B(x, r/4):-\wh{ \varphi}(\widetilde{y})\le y_d\le \wh{\varphi}(\widetilde{y})\}$
and 
$E:=\{y\in B(x, r/4): y_d> \wh{\varphi}(\widetilde{y})\}\subset D$. 
We will prove \eqref{e:claims} by showing that $\mbox{I}+\mbox{II}+\mbox{III}\le c_1 V(r)^{-1}$, where
\begin{align*}
&{\rm I}:=\int_{B(x, r/4)^c}( h_r(y)+g_x(y))\nu(|x-y|)dy,\nn\\
&{\rm II}:=\int_A (h_r(y)+g_x(y))\nu(|x-y|)dy,\qquad \mbox{ and }\qquad {\rm III}:=\int_E |h_r(y)-g_x(y)|\nu(|x-y|)dy.
\end{align*}

First, since $h_r\le V(r)$, by \cite[(2.23)]{BGR1} and Proposition \ref{st2}, we have
\begin{align*}
{\rm I}&\le V(r)
\int_{B(x, {r}/{4})^c} \nu(|x-y|)dy 
+\sup_{\{z\in\R^d:0<z_d<r\}}\int_{B(z, {r}/{4})^c\cap \H} g_x(y)\nu(|z-y|)dy\\
&\le  c_2 V(r)^{-1}+\sup_{\{z\in\R^d:0<z_d<r\}}\int_{B(z, {r}/{4})^c\cap \H} w(y)\nu(|z-y|)dy\le (c_2 +C_{\ref{st2}})V(r)^{-1}.
\end{align*}

For $y\in A$,  since $\wh{\varphi}(\widetilde{y})\le 2\Lambda|\widetilde{y}|$ and $V$ is increasing and subadditive, we observe that  $h_r(y)+g_x(y)\le 2V(2\wh{\varphi} (\widetilde{y}))\le c_3 V(|\widetilde {y}|)$. For $s\le r/4$, note that $m_{d-1}(\{y:|\widetilde{y}|=s, - \wh{\varphi}(\widetilde{y})\le y_d\le \wh{\varphi}(\widetilde{y})\})\le c_4 s^{d}$ where $m_{d-1}(dy)$ is the surface measure on $\R^{d-1}$.
Thus 
$\int_{|\widetilde{y}|=s} {\bf1}_A(y)V(|\widetilde {y}|)\nu( |\widetilde{y}|)$ $m_{d-1}(dy)\le c_4 V(s)\nu(s) s^{d}$ for $0<s<r/4$.
From \eqref{V:sc}, we note that $V(s)^{-1}\le C_V V(r)^{-1} (r/s)^{\ua/2}$ for $s\le r$. 
Hence, by \eqref{e:com} and \eqref{d:nu}, 
 \begin{align*}
{\rm II}\le&\,  c_3 c_4\,\int _0 ^{r}V(s)\nu(s)s^{d} ds\nn\\
\le&  \,c_5\,\int _0 ^{r}V(s)^{-1}s ds\le  c_4\cdot  C_V V(r)^{-1}  r^{\ua/2} \int _0 ^{r}s^{-\ua/2} ds\nn\\
=& \,   c_5\cdot C_V V(r)^{-1} \frac{1}{1-\ua/2} \, r^{1}\le   c_6V(r)^{-1}
\end{align*}
for some positive constant $c_6=c_6(\ua,\lC, \uC,  \Lambda, d)$.

\kk{When $y\in E$, we have that $|y_d-\delta_D(y)|\le \kk{\Lambda|\widetilde{y}|^2}$.
Indeed,  if $0<y_d=\delta_{\H}(y)\le \delta_D(y)$ and $y\in E$,  $ \delta_{D}(y)\le y_d+|\varphi(\widetilde{y})|\le y_d+\kk{\Lambda|\widetilde{y}|^2}$.
Since we assume that $R_0<1$ and $\Lambda>1$, if $y_d=\delta_{\H}(y)\ge \delta_D(y)$ and $y\in E$,
the interior ball condition implies that
\begin{align*}
y_d- \delta_D(y)&\le y_d -R_0+ \sqrt{|\wt y|^2+(R_0-y_d)^2}=\frac{|\wt y|^2}{\sqrt{|\wt y|^2+(R_0-y_d)^2}+(R_0-y_d)}\nn\\
& \le \frac{|\wt y|^2}{2(R_0-y_d)}\le \frac{|\wt y|^2}{R_0}\le \Lambda |\wt y|^2.
\end{align*}
Furthermore, $|y_d-\delta_D(y)|\le \kk{\Lambda|\widetilde{y}|^2}$ yields that 
$$\Lambda |\wt y|^2\le y_d-\Lambda |\wt y|^2\le y_d\wedge \delta_D(y)\quad\mbox{ and } \quad y_d\vee \delta_D(y)-(y_d-\Lambda |\wt y|^2)\le 2\Lambda |\wt y|^2.$$
Hence, by the mean value Theorem  and  the scale invariant Harnack inequality for $Z^d$ (\cite[Theorem 1.4]{MR2524930}) applying to $V^{\prime}$, we have that
\begin{align}\label{e:h1}
|h_r(y)-g_x(y)|=&|V(\delta_D(y))-V(y_d)|\le 
\sup_{u\in [(y_d\wedge \delta_D(y), \,\, y_d\vee \delta_D(y)]} V^{\prime}(u)|\delta_D(y)-y_d|\nn\\
\le &\sup_{u\in [y_d-\Lambda|\wt y|^2, \,\, y_d\vee \delta_D(y)]} V^{\prime}(u)|\delta_D(y)-y_d|\nn\\
\le &\,\,c_7 \inf_{u\in [y_d-\Lambda|\wt y|^2,\,\, y_d\vee \delta_D(y)]} V^{\prime}(u)|\widetilde{y}|^{2}\le 2\Lambda c_7  V^{\prime}\(y_d-\tfrac{1}{2}\wh{\varphi}(\wt{y})\) |\widetilde{y}|^{2}.
\end{align}}
Since  $E\subset \{ (\wt{y}, y_d): |\widetilde{y}|<r/4, \, \wh{\varphi}(\widetilde{y})  <y_d< \kk{\tfrac{1}{2}}\wh{\varphi}(\widetilde{y})+r/2\}$, using with \eqref{e:h1} and the polar coordinates for $|\wt{y}|=v$, we first see that
\begin{align*}
{\rm III} \le&\, 2\Lambda c_7 \int_E V^{\prime}\(y_d-\tfrac{1}{2}\wh{\varphi}(\wt{y})\)|\widetilde{y}|^{2} \nu(|x-y|)dy\nn\\
\le&\, c_8\int_0^{r/4}\int_{\wh{\varphi}(v)}^{\kk{\tfrac{1}{2}}\wh{\varphi}(v)+r/2}V^{\prime}(y_d-\tfrac{1}{2}\wh{\varphi}(v))\nu((v^2+|y_d-x_d|^2)^{1/2}) v^{d}dy_d dv
\end{align*}
Let $s:= y_d-\tfrac{1}{2}\wh{\varphi}(v)$.
Since $(v^2+|y_d-x_d|^2)^{1/2}\ge (v+|y_d-x_d|)/2$ and $\nu$ is decreasing, by \eqref{d:nu} and \eqref{e:com}, we have that
\begin{align*}
 \nu((v^2+|y_d-x_d|^2)^{1/2}) v^{d}&\le \nu((v+|s+\tfrac{1}{2}\wh{\varphi}(r)-x_d|)/2)(v+(|s+\tfrac{1}{2}\wh{\varphi}(v)-x_d|)^{d}\nn\\
 &\le c_9 V(v+|s+\tfrac{1}{2}\wh{\varphi}(v)-x_d|)^{-2}
\end{align*}
For $g(s):=\sup_{u\ge s}({V(u)}/{u})$, $V^{\prime}(s)\le C_{\ref{VbisEst}} ({V(s)}/{s}) \le C_{\ref{VbisEst}} g(s)$ by Proposition \ref{VbisEst}. 
Therefore we have that 
\begin{align*}
{\rm III}&\le c_8\cdot c_9\int_0^{r/2}\int_0^{r/2} V^{\prime}(s)V(v+|s+\tfrac{1}{2}\wh{\varphi}(v)-x_d|)^{-2}dsdv\nn\\
&\le  c_{10}\int_0^{r/2}\int_0^{r/2} g(s)V(v+|s+\tfrac{1}{2}\wh{\varphi}(v)-x_d|)^{-2}dsdv.
\end{align*}
Applying \cite[Lemma 4.4]{2012arXiv1212.3092K} with 
non-increasing functions $g(s)$ and $f(s):= V(s)^{-2}$ and 
$x(r)=s+\tfrac{1}{2}\wh{\varphi}(r)$, we have that  
\begin{align*}
{\rm III}\le c_{11} \int_0^{3r/4}G(u)V(u)^{-2}du.
\end{align*}
where $G(u)=\int_0^u g(s)ds$. By subadditivity of $V$ and \eqref{V:sc},  
$G(u) \le 2\int_0^u ({V(s)}/{s}) \,ds\le c_{12} V(u)$.
Using \eqref{V:sc} again, we conclude that
\begin{align*}
{\rm III} \le c_{11}\cdot c_{12} \int_0^{r} V(u)^{-1}du\le c_{13}V(r)^{-1}r^{\ua/2}\int_0^{r} u^{-\ua/2}du\le c_{14}V(r)^{-1}
\end{align*}
for some positive constant $c_{14}:=c_{14}(\la, \ua, \lC, \uC,  d)$. 
\end{proof}

\section{Estimates on exit distributions for $Y$}\label{sec:exitY}

In this section we give some key estimates on exit distributions for $Y$. 
Throughout this section, we assume that $Y$ is the symmetric pure jump Hunt process with the jumping intensity kernel $J$ satisfying the conditions {\bf(J1.1)}, {\bf(J1.2)}, {\bf (SD)} and \textbf{(K$_{\eta}$)}.

For any $x\in \R^d$, stopping time $S$ (with respect to the filtration of $Y$), and non-negative measurable function $f$ on $\R_+ \times \R^d\times \R^d$ with $f(s, y, y)=0$ for all $y\in\R^d$ and $s\ge 0$, we have the following  L\'evy system:
\begin{equation}\label{e:levy}
\E_x \left[\sum_{s\le S} f(s,Y_{s-}, Y_s) \right] = \E_x \left[ \int_0^S \left(\int_{\R^d} f(s,Y_s, y) J(Y_s,y) dy \right) ds \right]
\end{equation}
(e.g., see  \cite[Appendix A]{MR2357678}).

Throughout this section, we assume that $D$ is a $C^{1,1}$ open set with $C^{1, 1}$ characteristics $(R_0, \Lambda)$, and   
without loss of generality, we will assume that $R_0<1$ and $\Lambda>1$.
Recall that the function $h_r(y)=h_{r,z}(y)$ is defined in Proposition \ref{g:st1}.

\begin{lem}\label{g:st1-1}
Let $r\le R_0/2$.
For any $z\in \partial D$ and $k\in\N$, let $B_k:=\{y \in D \cap B(z, r/4): \delta_{D \cap B(z, r/4) }(y) \ge 2^{-k}\}$. Then, for every $u\in \Rd$ and $k\in\N$ with $|u|<2^{-k}<2^{-8} r$,
\begin{align*}
 \gener^u h_{r,z}(w):= \lim_{\eps\downarrow 0} \, \int_{|(w-u)-y|>\eps} (h_{r,z}(y)-h_{r,z}(w-u))J(w, u+y)dy
\end{align*}
is well defined in $B_k$
and there exists $C_{\ref{g:st1-1}}=C_{\ref{g:st1-1}}(\phi, L_0, L_3, \Lambda,  \eta, d)>0$ independent of $z\in  \partial D$, $k\in\N$  with $2^{-k+8} <r \le R_0$ such that
\begin{align*}
| \gener^u h_{r,z}(w)|\le \frac{C_{\ref{g:st1-1}}}{ V(r)}\qquad \mbox{for all} \,\,\,\, w\in B_k,\,\, |u|<2^{-k}.
\end{align*}
\end{lem}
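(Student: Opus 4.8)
The plan is to compare $\gener^u h_{r,z}$ with the translated operator $\gener_Z h_r$ evaluated at the shifted point, exactly as in the proof of Proposition~\ref{g:st1}, and to control the error terms coming from (i) the translation by $u$, (ii) replacing $J$ by $\kappa(w,w)\nu(\cdot)$, and (iii) the fact that $\kappa$ depends on both arguments. Fix $z\in\partial D$, $k\in\N$ with $2^{-k+8}<r\le R_0$, $w\in B_k$ and $|u|<2^{-k}$, and set $x:=w-u$. Since $w\in B_k$ we have $\delta_D(w)\ge \delta_{D\cap B(z,r/4)}(w)\ge 2^{-k}$, hence $|u|<2^{-k}\le\delta_D(w)$ so $x\in D$ as well, and in fact $x,w$ both lie in $D\cap B(z,r/2)$ with $\delta_D(x),\delta_D(w)$ comparable. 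As in Proposition~\ref{g:st1}, introduce the half-space function $g_x(y)=V(\delta_\H(y))$ in the coordinate system $CS_{z_x}$ attached to the boundary point $z_x$ realizing $\delta_D(x)$; recall $\gener_Z g_x\equiv 0$ on $\H$ by Theorem~\ref{st3}. The first step is to write
\begin{align*}
\gener^u h_{r,z}(w)
&=\int \big(h_r(y)-h_r(x)\big)\big(J(w,u+y)-\kappa(w,w)\nu(|u+y-w|)\big)dy\\
&\quad+\kappa(w,w)\,P.V.\!\int \big(h_r(y)-h_r(x)\big)\nu(|y-x|)\,dy,
\end{align*}
after the change of variables $u+y\mapsto y$ in the first integral (so $|u+y-w|=|y-x|$ there too) and using $h_r(w-u)=h_r(x)$.

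The second term is precisely $\kappa(w,w)\gener_Z h_r(x)$, which by Proposition~\ref{g:st1} (applicable since $x\in D\cap B(z,r/4)$ once $r$ is taken small enough, or after shrinking $r/4$ to $r/8$ in the statement) is bounded by $c\,\kappa(w,w)/V(r)\le c L_0/V(r)$; along the way this also shows the principal value exists. For the first term, split the $y$-integral over $B(x,r/4)^c$ and over $B(x,r/4)$. On $B(x,r/4)^c$ one bounds $|J(w,u+y)|+\kappa(w,w)\nu(|y-x|)\le (C_M+L_0)\nu(|y-x|)$ using {\bf(J1.3)} and the crude bound for $|y-x|$ large via {\bf(J1.2)}, while $|h_r(y)-h_r(x)|\le V(r)+V(\delta_D(x))\le 2V(r)$ (after using $\delta_D(x)\le \diam$ and a separate large-distance estimate for the tail, as in term I of Proposition~\ref{g:st1}); this part is $\le c/V(r)$ via \cite[(2.23)]{BGR1} and Proposition~\ref{st2}. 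On the near region $B(x,r/4)$, write $h_r(y)-h_r(x)=(h_r(y)-g_x(y))+(g_x(y)-g_x(x))+(g_x(x)-h_r(x))$, note $g_x(x)=h_r(x)$ so the last term vanishes, and estimate $|g_x(y)-g_x(x)|\le V'(\xi)|y_d-x_d|$ which combined with $|h_r(y)-g_x(y)|$ controlled exactly as in terms II and III of Proposition~\ref{g:st1} gives $|h_r(y)-h_r(x)|\le c\,V(|x-y|)\wedge(\text{something }\lesssim|x-y|^{1+\rho}\text{-type})$; multiply by the difference $|J(w,u+y)-\kappa(w,w)\nu(|y-x|)|$. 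Here use {\bf(J1.1)} and {\bf(K$_\eta$)}: $|J(w,u+y)-\kappa(w,w)\nu(|u+y-w|)|=|\kappa(w,u+y)-\kappa(w,w)|\,\nu(|y-x|)\le 2L_3(|u|^\eta+|u+y-w|^\eta)\nu(|y-x|)\le c L_3(|u|^\eta+|y-x|^\eta)\nu(|y-x|)$ for $|u+y-w|\le 1$, which holds once $r$ is small. The resulting integrals $\int_{B(x,r/4)}(|u|^\eta+|y-x|^\eta)\,|h_r(y)-h_r(x)|\,\nu(|y-x|)dy$ are finite and $\le c/V(r)$ by the same power-counting as in Proposition~\ref{g:st1}, using $\eta>\ua/2$, $\rho>\ua/2$, and {\bf(SD)}/{\bf(WS)} scaling; the extra factor $|u|^\eta\le 2^{-k\eta}$ only helps.

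The main obstacle is the near-diagonal term with the mismatched kernels: one must simultaneously use the regularity of $\kappa$ (through {\bf(K$_\eta$)} and its consequence \eqref{e:kecon}) and the delicate cancellation in $h_r(y)-h_r(x)$ near the boundary, as in Proposition~\ref{g:st1}, while also absorbing the translation $u$. The key points that make it work are that $w\in B_k$ forces $\delta_D(w)\gtrsim 2^{-k}\gtrsim|u|$ so $x$ stays well inside $D$ and the geometry/coordinate-system setup of Proposition~\ref{g:st1} transfers to $x$ with comparable constants, and that all error terms carry either a spare power $|u|^\eta$ or are of the same type already estimated. Assembling the three bounds yields $|\gener^u h_{r,z}(w)|\le C_{\ref{g:st1-1}}/V(r)$ with $C_{\ref{g:st1-1}}$ depending only on $\phi,L_0,L_3,\Lambda,\rho,\eta,d$, and the pointwise existence of the limit follows since each piece converges (the near-diagonal integrand is dominated via Proposition~\ref{VbisEst} exactly as in Theorem~\ref{st3}).
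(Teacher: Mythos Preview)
Your decomposition is exactly the paper's: with $x=w-u$ one writes $\gener^u h_{r,z}(w)=\kappa_u(x,x)\,\gener_Z h_r(x)+\text{(remainder)}$, bounds the first term by Proposition~\ref{g:st1}, and controls the remainder via {\bf(K$_\eta$)} near the diagonal and {\bf(J1.2)} far away. Two simplifications you are missing, and one small correction. First, since $u+y-w=y-x$, condition {\bf(K$_\eta$)} gives directly $|\kappa(w,u+y)-\kappa(w,w)|\le L_3|y-x|^\eta$; there is no $|u|^\eta$ term. Second, for the near region you do not need to reintroduce the half-space function $g_x$ or redo the II/III analysis of Proposition~\ref{g:st1}: simple subadditivity of $V$ already yields $|h_r(y)-h_r(x)|\le 2V(|x-y|)$ (the paper checks the cases $y\in D\cap B(z,r)$ and $y\notin D\cap B(z,r)$ separately), and then $\int_{|x-y|<r/2}V(|x-y|)\,|x-y|^\eta\nu(|x-y|)\,dy\le c/V(r)$ by the scaling \eqref{V:sc} and $\eta>\ua/2$. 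All of the delicate boundary geometry is already packaged into the bound $|\gener_Z h_r(x)|\le C_{\ref{g:st1}}/V(r)$. Finally, you invoke {\bf(J1.3)} on the far region, but the standing hypotheses of this section are only {\bf(J1.1)}, {\bf(J1.2)}, {\bf(SD)} and {\bf(K$_\eta$)}; for $r/2\le|x-y|\le 1$ use {\bf(J1.1)} with $|\kappa|\le L_0$, and for $|x-y|>1$ use {\bf(J1.2)} together with $|h_r|\le V(r)$.
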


\proof
We fix $z \in \partial D$ and use the short notation $h_r(y)=h_{r,z}(y)$.  
For any $w\in B_k$ and  $|u|<2^{-k}<2^{-8} r$,  let $x:=w-u\in B(z, r/4)$. 
Define $\kappa_u(x, y):=\kappa(u+x, u+y)$, and for $\eps < 2^{-k-1}$ denote $A_\eps(x)$ and  $L(x)$ by
\begin{align*}
A_\eps(x)&:=\int_{\eps<|x-y|\le 1} (h_r(y)-h_r(x))(\kappa_u(x, y)-\kappa_u(x, x))\nu(|x-y|)dy,\\
 L(x)&:=\int_{1<|x-y|} (h_r(y)-h_r(x)){ (J(x, y)- \kappa_u(x, x) \nu(|x-y|))}dy,
\end{align*} so that 
\begin{align*}
&\int_{\eps<|x-y|\le 1} (h_r(y)-h_r(x))\kappa_u(x, y)\nu(|x-y|)dy+\int_{1<|x-y|} (h_r(y)-h_r(x))J(x, y)dy\\
=&  A_\eps(x)+\kappa_u(x, x)\cdot \gener_Z^{\eps} h_r(x) +L(x)
\end{align*}
where $\gener_Z^{\eps}$ is defined in \eqref{e:gZ}.

By the definition of $h_r$,  \eqref{a:kappa}, \eqref{d:nu} and {\bf(J1.2)}, for $r\le R_0<1$, we first obtain that
\begin{align}\label{e:C(x)}
|L(x)|\le c_0{\left(\left|\int_{1<|x-y|} J(x, y)dy\right|+\left|\int_{1<|x-y|} \nu(|x- y|)dy\right|\right)} \le c_1 V(r)^{-1}.
\end{align}

On the other hand, 
\begin{align*}
{|A_{\eps}(x)|}&\le  \,  \left(\int_{ |x-y|<r/2}+\int_{r/2\le |x-y|}\right)|h_r(y)-h_r(x)||\kappa_u(x, y)-\kappa_u(x, x)|\nu(|x-y|)dy\\
&=: \mbox{I}(x)+\mbox{II}(x).
\end{align*} 
For $|x-y|<r/2$,   $\left|h_r(y)-h_r(x)\right|\le V(|x-y|)$ by subadditivity of $V$, and  $V(r)/V(|x-y|)\le C_V (r/|x-y|)^{\ua/2}$ by \eqref{V:sc}. 
Also $|\kappa_u(x, y)-\kappa_u(x, x)|\le L_3 |x-y|^{\eta}$ by the assumption \textbf{(K$_\eta$)}.
Hence \eqref{d:nu} and \eqref{e:com} imply that 
\begin{align}\label{eq:I}
|\mbox{I}(x)| &\le c_2 \int_{|x-y|<r}V(|x-y|)^{-1}|x-y|^{\eta-d}dy\nn\\
&\le c_2 C_V V(r)^{-1}r^{\ua/2} \int_{|x-y|<r}|x-y|^{-\ua/2+\eta-d}dy\le c_3 V(r)^{-1}
\end{align}
for some positive constant $c_3:=c_3( \ua, \lC, \uC, L_3, \eta, d)$. The last inequality holds since $\eta>\ua/2$.
{To obtain the upper bound of II$(x)$, note that $\left|h_r(y)-h_r(x)\right|\le 2V(|x-y|)$ for $r/2\le |x-y|$.
Indeed, if $y\in (D\cap B(z, r))^c$,  then $\left|h_r(y)-h_r(x)\right|=\left|h_r(x)\right|\le V(r)\le 2V(|x-y|)$ by subadditivity of $V$. If  $y\in  D\cap B(z, r)$,  $\left|h_r(y)-h_r(x)\right|\le V(|\delta_D(y)-\delta_D(x)|)\le V(|x-y|)$ by subadditivity of $V$.}
Hence by \eqref{a:kappa} and \cite[Lemma 3.5]{BGR1}, we obtain that 
{\begin{align}\label{eq:II}
|\mbox{II}(x)|\le 4L_0 \int_{r/2\le |x-y|}V(|x-y|)\nu(|x-y|)dy\le c_5 V(r)^{-1}.
\end{align}}
From  \eqref{eq:I} and \eqref{eq:II}, 
\begin{align}\label{eq:III}
\lim_{\eps\downarrow 0} A_\eps(x) \text{ exists } \quad \text{ and } \quad  |\lim_{\eps\downarrow 0} A_\eps(x)| \le (c_3+c_5) V(r)^{-1}.
\end{align}

Finally  from Proposition \ref{g:st1}, 
$\lim_{\eps\downarrow 0}\gener_Z^{\eps} h_r(x) $ exists and 
\begin{align}\label{eq:A}
\left|\lim_{\eps\downarrow 0}\gener_Z^{\eps} h_r(x)  \right|\le C_{\ref{g:st1}}\,V(r)^{-1}.
\end{align}
Hence combining \eqref{e:C(x)}, \eqref{eq:III} and \eqref{eq:A}, we have the conclusion.
\qed

\begin{thm}\label{T:exit}
For any $x\in D$, let $z_x\in \partial D$ be a point satisfying $\delta_D(x)=|x-z_x|$.
\begin{description}
\item{(1)}
There are constants $A_{\ref{T:exit}}=A_{\ref{T:exit}}(\phi, L_0, L_3, \Lambda,  \eta)\in(0, 1)$ and $C_{\ref{T:exit}.1}=C_{\ref{T:exit}.1}$ $(\phi, L_0, L_3,$ $ \Lambda, \eta)>0$, such that for any $s \le A_{\ref{T:exit}}R_0/2 $ and $x\in D$ with $\delta_D(x)<s$,
\begin{equation}\label{e:ext}
\E_x\left[ \tau_{D \cap B(z_x, s)}\right] \le C_{\ref{T:exit}.1} \,V(s)V\(\delta_{D}(x)\).
\end{equation}
\item{(2)}
There is a constant $C_{\ref{T:exit}.2}=C_{\ref{T:exit}.2}$ $(\phi, L_0, L_3,$ $ \Lambda, \eta)>0$, such that
for any $s\le R_0/2$,  $\lambda\ge 4$ and $x\in D$ with $\delta_D(x)<\lambda^{-1}s/2$,
\begin{align}\label{e:har}
\p_{x}&\left(Y_{ \tau_{D \cap B(z_x, \lambda^{-1}s)}}\in \{2 \Lambda |\wt y|<y_d, \lambda^{-1}s <|y|<s  \mbox{ in } CS_{z_x}\}\right) \ge C_{\ref{T:exit}.2} \frac{V(\delta_D(x))}{V(s)}.
\end{align}  
\end{description}
\end{thm}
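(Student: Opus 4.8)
The plan is to run both parts through the Dynkin-type identity \eqref{e:SnS} applied to the barrier $h_{r,z_x}$ of Proposition~\ref{g:st1}, with $r$ a suitable multiple of the relevant radius. Since $h_{r,z_x}\notin C^2_c(\Rd)$, I would first extend \eqref{e:SnS} to it: with $\varphi_n$ a mollifier supported in $B(0,1/n)$ one has $\gener (h_{r,z_x}*\varphi_n)(w)=\int \gener^u h_{r,z_x}(w)\,\varphi_n(u)\,du$, so by Lemma~\ref{g:st1-1} the left side is bounded by $C_{\ref{g:st1-1}}/V(r)$ on $B_k$ once $n\ge 2^{k}$; applying \eqref{e:SnS} to $h_{r,z_x}*\varphi_n$ with the exit times from $\mathrm{int}\,B_k$, letting $n\to\infty$ and then $k\to\infty$, one gets, for $U=D\cap B(z,r/4)$ and $x\in U$,
\[
\E_x\big[h_{r,z}(Y_{\tau_U})\big]-V(\delta_D(x))=\E_x\!\int_0^{\tau_U}\gener h_{r,z}(Y_t)\,dt,\qquad \big|\gener h_{r,z}\big|\le C_{\ref{g:st1-1}}/V(r)\ \text{on }U .
\]
(Here $\E_x[\tau_U]<\infty$ since the rate of jumping out of the bounded set $U$ is bounded below by {\bf (J1.1)}, and the exceptional behaviour of $h_{r,z}$ on the null set $\partial D\cup\partial B(z,r)$ is harmless; the case $d=1$ is treated as in Proposition~\ref{g:st1}.)

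For part (1) I would first upgrade Proposition~\ref{g:st1}/Lemma~\ref{g:st1-1} to a \emph{one-sided} bound: there is $r_0=r_0(\phi,L_0,L_3,\Lambda,\rho,\eta)>0$ with $\gener h_{r,z}(w)\le -c_0/V(r)$ for $w\in D\cap B(z,r/4)$ and $0<r\le r_0$. This comes from re-examining the proof of Proposition~\ref{g:st1} keeping signs: writing $\gener_Z h_{r,z}(w)=\gener_Z(h_{r,z}-g_w)(w)$, where $g_w$ is the half-space barrier at the foot of the perpendicular from $w$ (so $h_{r,z}(w)=g_w(w)$ and $\nabla(h_{r,z}-g_w)(w)=0$), the far-field contribution $-\int_{(D\cap B(z,r))^c}g_w(y)\nu(|w-y|)\,dy$ is nonpositive and, bounded below by restricting to the part of the annulus $\{r<|y-z|<2r\}$ at distance $\ge r/8$ from the tangent hyperplane, is $\le -c/V(r)$; every other term (the graph-discrepancy terms $\mathrm{II},\mathrm{III}$ of that proof, and the perturbation terms $A(\cdot),L(\cdot)$ of Lemma~\ref{g:st1-1}) is $O\!\big(r^{\rho\wedge(\eta-\ua/2)}/V(r)\big)+O(V(r))=o(1/V(r))$ as $r\downarrow0$, and $\kappa(w,w)\in[L_0^{-1},L_0]$ only affects constants. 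Choosing $A_{\ref{T:exit}}$ so small that $r:=4s\le r_0$ (recall $R_0<1$), the identity above and $h_{r,z}\ge0$ give
\[
\tfrac{c_0}{V(r)}\,\E_x\big[\tau_{D\cap B(z_x,s)}\big]\le V(\delta_D(x))-\E_x\big[h_{r,z_x}(Y_{\tau_{D\cap B(z_x,s)}})\big]\le V(\delta_D(x)),
\]
and \eqref{e:ext} follows since $V(4s)\le C_V4^{\ua/2}V(s)$.

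For part (2), put $r:=4\lambda^{-1}s$, so $U':=D\cap B(z_x,\lambda^{-1}s)=D\cap B(z_x,r/4)$ and $r\le R_0/2$. First I would prove the matching lower bound $\E_x[\tau_{U'}]\ge c_1 V(r)V(\delta_D(x))$, for which only the two-sided bound $|\gener h_{r,z_x}|\le C_{\ref{g:st1-1}}/V(r)$ is needed: the identity gives $V(\delta_D(x))\le\E_x[h_{r,z_x}(Y_{\tau_{U'}})]+\tfrac{C_{\ref{g:st1-1}}}{V(r)}\E_x[\tau_{U'}]$; since $h_{r,z_x}\le V(r)$ is supported in $D\cap B(z_x,r)$ and $\delta_D(x)<\lambda^{-1}s/2=r/8$, Corollary~\ref{st5} (with $U=U'$, centre $z_x$, radius $r/4$) yields $\E_x[h_{r,z_x}(Y_{\tau_{U'}})]\le V(r)\,\p_x(Y_{\tau_{U'}}\in D)\le V(r)C_{\ref{st5}}\E_x[\tau_{U'}]/\phi(r/4)$, and combining with $\phi(r/4)\asymp V(r)^2$ and absorbing proves it. Next, with $A:=\{2\Lambda|\wt y|<y_d,\ \lambda^{-1}s<|y|<s\ \text{in}\ CS_{z_x}\}$ we have $A\cap U'=\emptyset$, so the L\'evy system \eqref{e:levy} gives
\[
\p_x\big(Y_{\tau_{U'}}\in A\big)=\E_x\!\int_0^{\tau_{U'}}\!\Big(\int_A J(Y_t,y)\,dy\Big)dt\ \ge\ \Big(\inf_{w\in U'}\int_A J(w,y)\,dy\Big)\E_x[\tau_{U'}].
\]
The cone-shell $\{2\Lambda|\wt y|<y_d,\ \lambda^{-1}s<|y|<2\lambda^{-1}s\}\subseteq A$ has Lebesgue measure $\asymp(\lambda^{-1}s)^d$, and on it $|w-y|<s<R_0<1$, so by {\bf (J1.1)}, \eqref{nu1} and \eqref{d:nu}, $J(w,y)\ge L_0^{-1}\nu(|w-y|)\gtrsim\nu(\lambda^{-1}s)$; hence $\inf_{w\in U'}\int_A J(w,y)\,dy\gtrsim(\lambda^{-1}s)^d\nu(\lambda^{-1}s)=1/\phi(\lambda^{-1}s)\asymp1/V(r)^2$. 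Inserting the exit-time bound,
\[
\p_x\big(Y_{\tau_{U'}}\in A\big)\ \gtrsim\ \frac{c_1 V(r)V(\delta_D(x))}{V(r)^2}=\frac{c_1 V(\delta_D(x))}{V(r)}\ \ge\ \frac{c_1 V(\delta_D(x))}{V(s)},
\]
which is \eqref{e:har} with a constant independent of $\lambda$ (the dependence on $\lambda$ in the lower bound on $\int_A J$ and in $\E_x[\tau_{U'}]$ cancels because both are controlled at the single scale $\lambda^{-1}s$).

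The step I expect to be the main obstacle is the one-sided refinement of Proposition~\ref{g:st1}: one must verify that, after splitting off the manifestly negative far-field term, the entire remainder of $\gener h_{r,z}$ — the contributions of the $C^{1,\rho}$ discrepancy between $\delta_D$ and $\delta_{\mathbb H}$ near the boundary, together with the non-L\'evy perturbations coming from $\kappa$ and from $J$ on $\{|x-y|>1\}$ — is genuinely $o(1/V(r))$ as $r\downarrow0$; this is exactly what forces the smallness assumption $s\le A_{\ref{T:exit}}R_0/2$ in (1), whereas in (2) the crude two-sided estimate already suffices. A secondary (routine but unavoidable) point is making the Dynkin identity rigorous for the non-smooth barrier up to $\tau_U$, i.e.\ controlling, in the mollification limit, the time the process spends in the thin strip near $\partial(D\cap B(z,r/4))$; the exhaustion by $\mathrm{int}\,B_k$ handles this.
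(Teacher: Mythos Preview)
Your proposal is correct, and for part (2) it is essentially the paper's argument (Dynkin identity plus Corollary~\ref{st5} gives the lower bound on $\E_x[\tau_{U'}]$; then the L\'evy system on the cone-shell $\{2\Lambda|\wt y|<y_d,\ \lambda^{-1}s<|y|<2\lambda^{-1}s\}$ converts this into the hitting-probability bound).

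For part (1), however, you take a genuinely different route. The paper does \emph{not} establish a one-sided bound $\gener h_{r,z}\le -c_0/V(r)$; it uses only the two-sided bound $|\gener h_{R}^{(k)}|\le C_{\ref{g:st1-1}}/V(R)$ and a two-scale trick. Namely, it applies the barrier $h_{R}$ at a large scale $R$ while exiting the small set $D\cap B(z_x,\lambda^{-1}R)$; the L\'evy system then gives
\[
\E_x\big[h_R(Y_{\tau})\big]\ \ge\ C_5\,\E_x[\tau]\,\big(V(\lambda^{-1}R)^{-1}-V(R)^{-1}\big),
\]
so that, combined with the Dynkin inequality \eqref{e:ss}, the nuisance term $C_{\ref{g:st1-1}}V(R)^{-1}\E_x[\tau]$ is absorbed once $\lambda$ is chosen large enough that $V(\lambda^{-1}R)\ll V(R)$. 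This is what produces the constant $A_{\ref{T:exit}}=\lambda_0^{-1}$. Your approach instead revisits the proof of Proposition~\ref{g:st1} keeping signs: the far-field piece $-\int_{\H\setminus(D\cap B(z,r))}g_w\,\nu(|w-\cdot|)$ is genuinely $\asymp -1/V(r)$, while the $C^{1,\rho}$ discrepancy terms (extensions of the paper's II, III) are $O(r^\rho/V(r))$, the $\kappa$-H\"older term $A(\cdot)$ is $O(1)$, and $L(\cdot)=O(V(r))$, all $o(1/V(r))$ as $r\downarrow 0$; so the smallness of $A_{\ref{T:exit}}$ enters as a threshold on $r$ rather than as a scale separation. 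The paper's route is more robust---it needs only that the correction terms are $O(1/V(r))$, not that they are quantitatively smaller---while yours is more direct and avoids invoking the L\'evy system in part (1), at the cost of the sign-tracking you flag as the main obstacle.
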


\begin{proof}
Without loss of generality, we assume that $z_x=0$.  
For $R\le R_0/2$,  let $h_R(y)=V(\delta_D(y)){\bf 1}_{D\cap B(0, R)}(y)$.
Let $f\ge 0$ be a smooth radial function such that $f(y)=0$ for $|y|>1$ and $\int_{\R^d} f (y) dy=1$.
For $k\geq 1$,  define $f_k(y):=2^{kd} f (2^k y)$ and $h^{(k)}_{ R}:= f_k*h_R \in C_c^2(\R^d)$,
and let 
$B_k^{\lambda}:=\{y \in D \cap B(0, \lambda^{-1}R/4): \delta_{D \cap B(0, \lambda^{-1}R) }(y) \ge 2^{-k}\}$  for $\lambda\ge 4$.

Since  $h^{(k)}_{R}$ is a $C^2_c$ function, $\gener h^{(k)}_{R}$ is well defined everywhere.
By Lemma \ref{g:st1-1}, for $w\in B_k^{\lambda}$ and $u\in B(0, 2^{-k})$ the following limit
\begin{align*}
 &\lim_{\eps\downarrow 0}\int_{|w-y|>\eps} (h_R(y-u)-h_R(w-u))J(w, y)dy\\
=&\lim_{\eps\downarrow 0}\, \int_{|(w-u)-y^{\prime}|>\eps} (h_R(y^{\prime})-h_R(w-u))J(w, u+y^{\prime})dy^{\prime}= \gener^u h_R(w)
\end{align*}
exists and $ -C_{\ref{g:st1-1}}V(R)^{-1} \le\gener^u h_R(w) \le C_{\ref{g:st1-1}}V(R)^{-1}$.
We note that
\begin{align*}
&\int_{|w-y|>\eps} (h^{(k)}_R(y)-h^{(k)}_R(w))J(w, y)dy\\
=&\int_{|w-y|>\eps}\int_{\R^d}f_k(u)\(h_R(y-u)-h_R(w-u)\)duJ(w, y)dy\\
=&\int_{|u|<2^{-k}}f_k(u)\int_{|w-y|>\eps}\(h_R(y-u)-h_R(w-u)\)J(w, y)dydu.
\end{align*}
By letting $\varepsilon \downarrow 0$ and using the dominated convergence theorem, it follows that for $w \in B_k^{\lambda}$ and $2^{-k+8} <\lambda^{-1}R$,
\begin{align}\label{e:generhk}
&|\gener h^{(k)}_R(w)|=  \left|\int_{|u|<2^{-k}} f_k(u) \gener^u h_R(w)\, du\right| \le C_{\ref{g:st1-1}} V(R)^{-1} \int_{|u|<2^{-k}} f_k(u) \, du =   C_{\ref{g:st1-1}} V(R)^{-1}.
\end{align}

Applying Lemma \ref{Dynkin_formula} to $B_k^{\lambda}$ and $h^{(k)}_{R}$, and using \eqref{e:generhk}, for any $x \in B_k^\lambda$,   we have 
\begin{equation*}\label{e:sh}
\E_{x}\left[h^{(k)}_{R}\big(Y_{\tau_{B_k^\lambda}}\big)\right]-C_{\ref{g:st1-1}}V(R)^{-1}  \E_x\left[\tau_{B_k^\lambda}\right] \le h^{(k)}_{R}(x) \le \E_{x}\left[h^{(k)}_{R}\big(Y_{\tau_{B_k^\lambda}}\big)\right]+C_{\ref{g:st1-1}}V(R )^{-1}  \E_x\[\tau_{B_k^\lambda}\].
\end{equation*}
By letting $k\to \infty$, for any $x \in D \cap B(0, \lambda^{-1}R)$, we obtain 
\begin{align}
&V(\delta_D(x))=h_R(x) \ge \E_{x}\left[h_R\left(Y_{ \tau_{D \cap B(0, \lambda^{-1}R)}} \right)  \right] - C_{\ref{g:st1-1}}  V(R)^{-1} \E_x\left[\tau_{D \cap B(0, \lambda^{-1}R)}\right]\label{e:ss}\\ 
\mbox{ and }\,\, &V(\delta_{D}(x))=h_R(x)  \le   \E_{x}\left[h_R\left(Y_{ \tau_{D \cap B(0, \lambda^{-1}R)}} \right)  \right]+ C_{\ref{g:st1-1}}  V(R)^{-1}\E_x\left[\tau_{D \cap B(0, \lambda^{-1}R)}\right].\label{e:ss1}
\end{align}

For any $z  \in D \,\cap\, B(0, \lambda^{-1}R)$ and $y \in D\,\cap \,(B(0, R) \setminus B(0, \lambda^{-1}R))$, 
by the fact that $\nu$ is decreasing and  \eqref{nu1},
$\nu(|y-z| ) \ge  \nu(2|y|) \ge  c_1 \nu(|y|)$.
So by \eqref{a:kappa}, {\bf(J1.1)} and \eqref{e:levy}, we obtain 
\begin{align}\label{e:30} 
\E_x\left[h_R\left(Y_{\tau_{D\cap B(0, \lambda^{-1}R)}}\right)\right]
&\ge  L_0^{-1}\E_x \int_{ D\cap \(B(0,R)\setminus B(0, \lambda^{-1}R)\)} \int_0^{\tau_{D \cap B(0, \lambda^{-1}R)}}  \nu(|Y_t-y|)dt h_R(y) dy \nonumber \\
&\ge  L_0^{-1} \, c_1  \, \E_x\left[\tau_{D \cap B(0, \lambda^{-1}R)}\right] \int_{  D\cap \(B(0, R)\setminus B(0, \lambda^{-1}R)\)}   \nu(|y|)h_R(y) dy.
\end{align}

Let $A:=\{(\wt{y}, y_d): 2 \Lambda |\wt y|<y_d\}$. 
For any $y\in A \cap B(0, R)$, 
since $y_d>2 \Lambda |\wt y|>2\Lambda |\wt{y}|^{2}>\varphi(\wt y)$,    we have $A \cap B(0, R)\subset D\cap B(0,R)$ and 
 $$
\delta_D(y)\ge (1+\Lambda)^{-1}\left(y_d-\varphi(\wt y)\right) \ge  (2\Lambda)^{-1} (y_d -\Lambda|\wt y|)> (4\Lambda)^{-1}y_d \ge  (4\Lambda((2\Lambda)^{-2}+1)^{1/2})^{-1}|y|.
$$
Combining this and \eqref{V:sc}, 
$V(\delta_D(y))\ge c_2V(|y|)$. 
By changing to polar coordinates with $|y|=t$, \eqref{e:com} and Proposition \ref{VbisEst}, we obtain that
\begin{align} \label{e:31}
\int_{  D\cap \(B(0, R)\setminus B(0, \lambda^{-1}R)\)}   & \nu(|y|)h_R(y) dy 
\ge c_2\int_{ A\cap \(B(0, R)\setminus B(0, \lambda^{-1}R)\)} \nu(|y|)V(|y|) dy\nonumber\\
&\ge c_3\int_{\lambda^{-1}R}^{R} \nu(t) V(t) t^{d-1}dt \ge c_4\int_{\lambda^{-1}R}^{R}V(t)^{-1}t^{-1}dt\nn\\
&\ge c_4\cdot C_{\ref{VbisEst}}\int_{\lambda^{-1}R}^{R}\frac{V^{\prime}(t)}{V(t)^2}dt
=  c_4\cdot C_{\ref{VbisEst}}\(V(\lambda^{-1}R)^{-1}-V(R)^{-1}\).
\end{align}

Combining \eqref{e:30} and \eqref{e:31}, there exists $C_5:=C_5(\lC, \ua,  \uC, L_0, \Lambda, d)>0$ such that \begin{align}\label{e:32}
\E_x\left[h_r\left(Y_{\tau_{D\cap B(0, \lambda^{-1}R)}}\right)\right]\ge C_5\,  \E_x\left[\tau_{D \cap B(0, \lambda^{-1}R)}\right]\(V(\lambda^{-1}R)^{-1}-V(R)^{-1}\).
\end{align}
Using \eqref{V:sc} again,  $V(\lambda^{-1}R)\le V(\lambda_0^{-1}R)\le C_V\lambda_0^{-\la/2}V(R)$ for any $\lambda\ge \lambda_0\ge 4$.
Let $\lambda_0:=(2C_V({C_5}+C_{\ref{g:st1-1}})/C_5)^{2/\la}\ge 1$.
Then combining \eqref{e:ss} and  \eqref{e:32},  we have that for $\lambda \ge \lambda_0$  
\begin{align}
V(\delta_{D}(x))
\ge & \left(C_5V( \lambda^{-1}R)^{-1} -(C_5+C_{\ref{g:st1-1}})V(R)^{-1} \right) \E_x[\tau_{D \cap B(0, \lambda^{-1}R)}]\label{e:nnVE}\\
\ge & \,(C_5/2)\, V(\lambda^{-1}R)^{-1}\E_x[\tau_{D \cap B(0, \lambda^{-1}R)}]\label{e:nVE}.
\end{align}
Thus, we have proved \eqref{e:ext} with $A_{\ref{T:exit}}=\lambda_0^{-1}$ and $s=\lambda^{-1}R$ where $\lambda\ge \lambda_0$.

From \eqref{e:ss1} and Corollary  \ref{st5} with \eqref{e:com} for $\delta_D(x)< \lambda^{-1}R/2$ and $\lambda\ge 4$, we first note that
\begin{align}\label{e:39}
V(\delta_D(x))
 \le &\,\, V(R)\,\p_{x}\left(Y_{ \tau_{D \cap B(0, \lambda^{-1}R)}}\in D \right)+ C_{\ref{g:st1-1}}  V(R)^{-1}\E_x\left[\tau_{D \cap B(0, \lambda^{-1}R)}\right]\nn\\
\le &\,\, \left( C_{\ref{st5}}V(R)\,V(\lambda^{-1}R)^{-2}+ C_{\ref{g:st1-1}}  V(R)^{-1}\right)\E_x\left[\tau_{D \cap B(0, \lambda^{-1}R)}\right]\nn\\
=&\,\,  c_6 V(R)\left(V(\lambda^{-1}R)^{-2}+V(R)^{-2}\right)\E_x\left[\tau_{D \cap B(0, \lambda^{-1}R)}\right].
\end{align}
{By \eqref{V:sc} and the subadditivity of $V$,
$V(s)^{-1}\ge 3^{-1}(3\lambda^{-1}R/s)^{\la/2}C_V^{-1} V(\lambda^{-1}R)^{-1}$  for $s\le 3\lambda^{-1}R$.} 
Combining this with  \eqref{e:com} and using  the polar coordinate with $|y|=t$, we have that
\begin{align} \label{e:33}
\int_{A\cap \( B(0, 3\lambda^{-1}R)\setminus B(0, 2\lambda^{-1}R)\)} \nu(|y|)  dy
&\ge c_7 \int_{2\lambda^{-1}R}^{3\lambda^{-1}R} \nu(t) t^{d-1}dt\nn\\
&\ge c_8 \int_{2\lambda^{-1}R}^{3\lambda^{-1}R} V(t)^{-2}t^{-1}dt
\ge c_9 V(\lambda^{-1}R)^{-2}.
\end{align}
For any  $z\in B(0, \lambda^{-1}R)$ and $y\in B(0, 3\lambda^{-1}R)\setminus B(0, 2\lambda^{-1}R)$, by \eqref{nu1} and 
the fact that $\nu$ is decreasing, 
$\nu(|y-z| ) \ge \nu(|y|+|z|) \ge  \nu(3|y|/2) \ge  c_{10} \nu(|y|)$.
So by \eqref{a:kappa}, {\bf(J1.1)} \eqref{e:levy} and \eqref{e:33}, we obtain 
\begin{align}\label{e:40}
&\p_{x}\left(Y_{ \tau_{D \cap B(0, \lambda^{-1}R)}}\in A\cap \left(B(0, 3\lambda^{-1}R)\setminus B(0, 2\lambda^{-1}R)\right)\right)\nn\\
\ge &\,L_0^{-1}\E_x\left[\int_0^{\tau_{D \cap B(0, \lambda^{-1}R)}}\int_{ A\cap \left(B(0, 3\lambda^{-1}R)\setminus B(0, 2\lambda^{-1}R)\right)}\nu(|Y_s-y|)dy \, ds\right]\nn\\
\ge &\, L_0^{-1}c_{10}\E_x\left[\int_0^{\tau_{D \cap B(0, \lambda^{-1}R)}}\int_{ A\cap \left(B(0, 3\lambda^{-1}R)\setminus B(0, 2\lambda^{-1}R)\right)}\nu(|y|)dy \, ds\right]\nn\\
\ge &\, c_9 \, c_{10} L_0^{-1} V(\lambda^{-1}R)^{-2}\E\left[\tau_{D \cap B(0, \lambda^{-1}R)}\right].
\end{align}
Hence combining \eqref{e:39}, \eqref{e:40}, and the fact that $V$ is increasing, we conclude that for $\lambda\ge 4$, 
\begin{align*}
V(\delta_D(x))\le &
c_{11} V(R)\left(V(\lambda^{-1}r)^{-2}+V(R)^{-2}\right)V(\lambda^{-1}R)^2\\
&\qquad\cdot \p_{x}\left(Y_{ \tau_{D \cap B(0, \lambda^{-1}R)}}\in A\cap \left(B(0, 3\lambda^{-1}R)\setminus B(0, 2\lambda^{-1}R)\right)\right)\nn\\
\le& 2c_{11}V(R)\p_{x}\left(Y_{ \tau_{D \cap B(0, \lambda^{-1}R)}}\in A\cap \left(B(0, R)\setminus B(0, \lambda^{-1}R)\right)\right).
\end{align*}
Thus, we have proved \eqref{e:har} with $s=R$.

\begin{remark}

\end{remark}

\section{Upper bound estimates}\label{sec:upbd}

In this section, we derive the upper bound estimate on $p_D(t,x, y)$ for $t\le T$ in $C^{1, 1}$  open  set $D$ with $C^{1, 1}$ characteristics $(R_0, \Lambda)$ .
As before, we will assume that $R_0<1$ and $\Lambda>1$ and fix such $C^{1, 1}$  open  set $D$ throughout this section.
We first introduce the next lemma which give a guideline to obtain the upper bound estimate on $p_D(t, x, y)$ (for its proof, see   \cite[Lemma 3.1]{CKS1} and \cite[{Lemma 1.10}]{BGR2013_3}).
Applying \eqref{e:ext} and Theorem \ref{T:2.1} to \eqref{eq:ub1}, in Proposition \ref{P:up1} we will obtain the intermediate upper bound for $p_D(t,x, y)$ having one boundary decay. 
Applying this result, \eqref{e:ext} and the upper bound of the survival probability of $Y$ (Lemma \ref{L:exit2}) to \eqref{eq:ub0}, we can get the short time sharp upper bound estimate for $p_D(t, x, y)$.

\begin{lem}\label{L:4.1} 
Let $Y$ be a symmetric pure jump Hunt process whose jumping intensity kernel $J$ satisfies the conditions {\bf(J1.1)} and {\bf(J1.2)}.
Suppose that $E\subset \R^d$ be an open set. Let $U_1, U_3\subset E$ be disjoint open subsets and $U_2:=E\setminus(U_1\cup U_3)$.
If $x\in U_1$, $y\in U_3$ and $t>0$, we have
\begin{align}
p_E(t, x, y) &\le\,\, \mathbb{P}_x\left(Y_{\tau_{U_1}}\in U_2\right) \cdot\sup_{s<t, z\in U_2} p_E(s, z, y)\nn\\
&\qquad + \int_0^t \p(\tau_{U_1}>s)\p_y(\tau_E>t-s) \, ds\cdot \sup_{u\in U_1, z\in U_3}J(u,z)\label{eq:ub0}\\
&\le \,\,\mathbb{P}_x\left(Y_{\tau_{U_1}}\in U_2\right) \cdot\sup_{s<t, z\in U_2} p(s, z, y)+\left(t \wedge \mathbb{E}_x[\tau_{U_1}]\right)\cdot\sup_{u\in U_1, z\in U_3} J(u,z). \label{eq:ub1}
\end{align}
\end{lem}

For the remainder of the section, we assume that $Y$ is the symmetric pure jump Hunt process with the jumping intensity kernel $J$ satisfying the conditions {\bf(J1.1)}, {\bf(J1.2)},  \textbf{(K$_{\eta}$)} and  \textbf{(SD)}.
Let
$$a_{T, R_0}:=[V(A_{\ref{T:exit}}R_0/4)]^2T^{-1}$$ 
where $A_{\ref{T:exit}}$ is the constant in Theorem \ref{T:exit}(1).
Denote $V^{-1}$ be the inverse function of $V$, then $V^{-1}(\sqrt{a_{T, R_0}\cdot t})\le A_{\ref{T:exit}}R_0/4$  for any $t\le T$.
\begin{lem}\label{L:exit2}
There exists $C_{\ref{L:exit2}}=C_{\ref{L:exit2}}(\phi, L_0, L_3,  \eta, R_0,  \Lambda, T)$ such that for any $t\le T $ and $x\in D$, we have that
\begin{equation*}
\p_x(\tau_D>t)\le C_{\ref{L:exit2}}\left(1\wedge \frac{V(\delta_D(x))}{ \sqrt{t}}\right).
\end{equation*}
\end{lem}
\proof
Let  $r_t:=V^{-1}( \sqrt  { a_{T, R_0}\cdot t}\,)$.
We only consider the case $V(\delta_D(x))<    \sqrt  {a_{T, R_0}\cdot t}$ which implies $\delta_D(x)<  A_{\ref{T:exit}}R_0/4<R_0/4$.
Let $U_t:=D\cap B(z_x, 2r_t)\subset D$ where $z_x\in\partial D$ with $\delta_D(x)=|x-z_x|$.
Then, using Chebyshev's inequality and Corollary \ref{st5},
we first obtain that
\begin{align*}
\p_x(\tau_D>t)
&=\p_x(\tau_{U_t}>t, \tau_D=\tau_{U_t})+\p_x(\tau_D >\tau_{U_t}>t)\\
&\le \p_x(\tau_{U_t}>t)+\p_x(Y_{\tau_{U_t}}\in D)\le \left(t^{-1}+C_{\ref{st5}}/\phi(2 r_t)\right) \E_x \tau_{U_t}.
\end{align*}
From \eqref{e:com} and the fact that $V$ is increasing and subadditive, 
$$\phi(2 r_t)\asymp [V(2 r_t)]^2\asymp [V( V^{-1}(\sqrt{a_{T, R_0}\cdot t}))]^2=a_{T, R_0}\cdot t.$$
Therefore, using \eqref{e:ext} in Theorem \ref{T:exit}, we conclude that
\begin{align*}
\p_x(\tau_D>t) \le c_1\frac{1}{t} \E_x \tau_{U_t} \le c_1 C_{\ref{T:exit}.1}\frac{1}{t} V(r_t)V(\delta_D(x))\le c_2 \frac{V(\delta_D(x))}{\sqrt{t}}.
\end{align*}
\qed

We will use the following inequality several times, which follows from  {\bf(WS)}:
 there exist $C_I:=(\uC\lC^{-1}\vee \lC^{-2})^{1/\la}>1$ such that
\begin{align}\label{i:sc}
C_I^{-1}\left(\frac{r}{R}\right)^{1/\la}\le \frac{\phi^{-1}(r)}{\phi^{-1}(R)}\le C_I \left(\frac{r}{R}\right)^{1/\ua}& \qquad \mbox{ for }\,\, 0<r\, \le  R. 
\end{align}

Recall the functions $F_{a, \gamma, T}(t, r)$ 
and $\Psi(t,x)$ 
 are defined in \eqref{eq:qd} and \eqref{e:dax}, respectively.

\begin{prop}\label{P:up1}

Let $a\le a_{T, R_0}$.
\begin{description}
\item[(1)]
Suppose that $D$ is bounded and the jumping intensity kernel $J$ satisfies the condition {\bf(J1)}. Then there exists a positive constant $C_{\ref{P:up1}.1}=C_{\ref{P:up1}.1}({ \phi, L_0, L_3,  \eta, R_0,  \Lambda, T, \diam(D), {a}})$ such that for any $(t, x, y)\in (0, T]\times D\times D$ with $ V^{-1}( \sqrt{a\cdot t})\le |x-y|$, we have 
\begin{equation*}
p_D(t,x, y)\le C_{\ref{P:up1}.1}\Psi(t,x)\cdot 
\left([\phi^{-1}(t)]^{-d}\wedge  t\nu(|x-y|)\right).
\end{equation*}

\item[(2)]
Suppose that the jumping intensity kernel $J$ satisfies the condition {\bf(J2)}. Then there exists a positive constant $C_{\ref{P:up1}}=C_{\ref{P:up1}}(\beta, \phi, L_0, L_3,  \eta, R_0,  \Lambda, T, {a})$ such that 
for any $(t, x, y)\in (0, T]\times D\times D$ with 
{$|x-y|\ge  V^{-1}( \sqrt{a\cdot t})\cdot{\bf 1}_{\beta\in[0, 1]} 
+2\cdot{\bf 1}_{\beta\in(1,\infty)}+\left(2+  V^{-1}( \sqrt{a\cdot t})\right)\cdot {\bf 1}_{\beta= \infty}$}, 
we have 
\begin{equation*}
p_D(t,x, y)\le C_{\ref{P:up1}}\Psi(t,x)\cdot 
\begin{cases}
F_{ C_{\ref{T:2.1}}\wedge\gamma_1, \gamma_1, T}\left(t, |x-y|/{3}\right) &\mbox{ if } \beta\in[0,\infty),\\
\left({2t}/{T|x-y|}\right)^{C_{\ref{T:2.1}}|x-y|/2}
&\mbox{ if } \beta=\infty.
\end{cases}
\end{equation*}

\end{description}
\end{prop}

\proof
Since we assume that $D$ is bounded in (1), by applying Theorem \ref{T:n2.1} instead of Theorem \ref{T:2.1}
the proof of (1) is similar to the that of (2), so  we only give the proof of (2).
Let $r_t:=V^{-1}(\sqrt{a\cdot t})/{9}$.
If $\delta_D(x)\ge r_t/2$, using subadditivity of $V$, we see that $\Psi(t, x)\asymp 1$. Thus, by Theorem \ref{T:2.1}, and the fact that $r\to 
F_{c, \gamma, T}
(t, r)$ is decreasing, we obtain the conclusion.

Let $0<\delta_D(x)\le r_t/2$.
Since $9r_t\le V^{-1}(\sqrt{a_{T , R_0}\cdot T})= { A_{\ref{T:exit}} R_0/4<1}$, $|x-y|\ge 9r_t$ for all $\beta \in  [0, \infty]$.
{Choose a point $z_x\in\partial D$ with $\delta_D(x)=|x-z_x|$
and let} $U_1:=B(z_x, r_t)\cap D$, $U_3:=\{z\in D:|z-x|\ge |x-y|/2\}$ and $U_2:=D\setminus (U_1\cup U_3)$. Then $x\in U_1$, $y\in U_3$ and $U_1\cap U_3=\emptyset$. Note that 
$|x-y|/2\le |x-y|-|z-x|\le |y-z|$ for any  $z\in U_2$. 
Therefore,  by virtue of Theorem \ref{T:2.1}, we have
we obtain 
\begin{align}
\sup_{s<t, z\in U_2} p(s, z, y)\le C_{\ref{T:2.1}} \sup_{s<t,|z-y|>|x-y|/2} F_{ C_{\ref{T:2.1}}, \gamma_1, T}(s, |z-y|) \le c_1 F_{ C_{\ref{T:2.1}}, \gamma_1, T}(t, |x-y|/2)\label{eq:u1}.
\end{align}
In fact, if $\beta\in (1, \infty]$,  we have $|z-y|\ge|x-y|/2>1$ and so $F_{C_{\ref{T:2.1}}, \gamma_1, T}(s, |z-y|)$ is increasing in $s$.
If $\beta\in [0, 1]$, we have $|z-y|\ge|x-y|/2\ge  V^{-1}( \sqrt{a\cdot t})/2$
and so $\Big([\phi^{-1}(s)]^{-d}\wedge s\nu(|z-y|)e^{-\gamma |z-y|^\beta}\Big)\asymp s\nu(|z-y|)e^{-\gamma |z-y|^\beta}$ using \eqref{e:com}, \eqref{V:sc} and \eqref{i:sc}. Also, $s\nu(r)e^{-\gamma r^\beta}$ is increasing in $s$.
Thus, combining there observations with the fact $r\to F_{C_{\ref{T:2.1}}, \gamma_1, T}(t, r)$ is decreasing, the second inequality above holds.

By Corollary \ref{st5} and \eqref{e:ext} in Theorem \ref{T:exit}, we obtain 
\begin{align}\label{eq:u2}
\p_x(Y_{\tau_{U_1}}\in U_2)& \le \p_x\left(Y_{\tau_{U_1}}\in D\right)\le C_{\ref{st5}}\phi(r_t)^{-1}\E_x \tau_{U_1}\nn\\
&\le C_{\ref{st5}}\cdot C_{\ref{T:exit}.1} V(\delta_D(x))/V(r_t)\le c_3 {V(\delta_D(x))}/{\sqrt t}.
\end{align}
In the last inequality we use  monotonicity and subadditivity of $V$, which imply $V(r_t)\asymp \sqrt{t}$.

Note that for $u\in U_1$ and $z\in U_3$ that 
\begin{align}\label{eq:u211}|u-z|\ge |z-x|-|x-z_x|-|u-z_x|\ge |x-y|/2-3{r_t}/2.\end{align}
Let $\beta\in [0, \infty)$. 
Since $|x-y|\ge 9{r_t}$, from  \eqref{eq:u211} we have $|u-z|\ge |x-y|/3$ for  $(u, z) \in U_1 \times U_3$, therefore by \eqref{a:kappa}, \eqref{e:Exp} and {\bf(J2)}, 
$$\left(\sup_{u\in U_1, z\in U_3} J(u, z)\right)\le c_4 e^{-\gamma_1(|x-y|/3)^{\beta}}
\nu(|x-y|)
\le c_5 t^{-1} F_{ \gamma_1, \gamma_1, T}(t, |x-y|/3).$$
Combining this with \eqref{e:ext} in Theorem \ref{T:exit}, we conclude that
\begin{align}\label{eq:u3}
\E_x [\tau_{U_1}]\left(\sup_{u\in U_1, z\in U_3} J(u, z)\right)&\le C_{\ref{T:exit}.1}V(r){V(\delta_D(x))}\cdot c_5 t^{-1}F_{ \gamma_1, \gamma_1, T}(t, |x-y|/3)\nn\\
&\le c_6 \frac{V(\delta_D(x))}{\sqrt{t}} F_{ \gamma_1, \gamma_1, T}(t, |x-y|/3).
\end{align}
If $\beta=\infty$, since $|u-z| \ge |x-y|/2-3{r_t}/2\ge (1+4{r_t})-3{r_t}/2\ge 1$, we have $J(u, z)=0$.

Hence, by applying \eqref{eq:u1}--\eqref{eq:u3} to \eqref{eq:ub1} for the case $\beta\in[0, \infty)$ and by applying \eqref{eq:u3} to \eqref{eq:ub1} for the case $\beta=\infty$, we reach the conclusion.
\qed

We denote by $X$ the process in the case $\beta=0$ in {\bf(J2)}, that is, $X$ is a symmetric Hunt process whose jumping kernel is $J^X(x, y):= \kappa(x,y)\nu(|x-y|).$
\begin{prop}\label{P:X}
\begin{description}
\item[(1)]
Suppose that the jumping intensity kernel $J$ satisfying {\bf(J1)} and $D$ is bounded.
There exists a positive constant $C_{\ref{P:X}.1}=C_{\ref{P:X}.1}({\phi, L_0, L_3,  \eta, R_0,  \Lambda, T, \diam(D)})$
such that for any $(t, x, y)\in (0, T]\times D\times D$, we have 
\begin{align*}
p_D(t, x, y)\le C_{\ref{P:X}.1}\Psi(t,x)\Psi(t,y)\left([\phi^{-1}(t)]^{-d}\wedge t\nu(|x-y|)\right).
\end{align*}
\item[(2)]
There exists a positive constant $C_{\ref{P:X}}=C_{\ref{P:X}}({\phi, L_0,  L_3,   \eta, R_0,  \Lambda, T})$ such that for any $(t, x, y)\in (0, T]\times D\times D$, we have  
\begin{align*}
p_D^X(t, x, y)\le C_{\ref{P:X}}\Psi(t,x)\Psi(t,y)\left([\phi^{-1}(t)]^{-d}\wedge t\nu(|x-y|)\right).
\end{align*}
\end{description}
\end{prop}

\proof
Using Theorem  \ref{T:n2.1} and  Proposition~\ref{P:up1} (1)
instead of Theorem~\ref{T:2.1} and  Proposition~\ref{P:up1} (2) respectively,  
the proof of (1) is almost identical to the one of (2). So we only give the proof of (2).

The semigroup property, Theorem~\ref{T:2.1} (for $\beta=0$), \eqref{i:sc} and Lemma~\ref{L:exit2} yield
\begin{align*}
p_D^X(t/2, x, y)& \le \left(\sup_{z, w\in D} p_D^X(t/4, z, w)\right)\int_D p_D^X(t/4, x, z)dz
\\
&\le \,c_1[\phi^{-1}(t/4)]^{-d} \p_x(\tau_D >t/4)  \le c_2 [\phi^{-1}(t)]^{-d} \Psi(t,x).
\end{align*}
Thus, by Proposition~\ref{P:up1}, \eqref{i:sc} and Theorem~\ref{T:2.1} (for $\beta=0$), we obtain 
$$p_D^X(t/2, x, y) \le c_3 \Psi(t/2,x) \left([\phi^{-1}(t/2)]^{-d}\wedge (t/2)\nu(|x-y|)\right) \le  c_4 \Psi(t,x) p^X(t/2, x, y).$$
Combining these with Theorem~\ref{T:2.1} (for $\beta=0$), the symmetry $p_D^X$ and the semigroup property of $p^X$, we conclude that 
\begin{align*}
p_D^X(t, x, y)\,&=\, \int_D p_D^X(t/2, x, z)\cdot p_D^X(t/2, z, y)dz
\le \, c_4^2 \Psi(t,x) \Psi(t,y) p^X(t, x, y)\\
&\,\le \, c_5 \Psi(t,x) \Psi(t,y) \left([\phi^{-1}(t)]^{-d}\wedge t\nu(|x-y|)\right). 
\end{align*}
\qed

Suppose that the jumping intensity kernel $J$ satisfying {\bf(J2)}.
By Meyer's construction (e.g., see \cite[\S 4.1]{MR2357678}), when $\beta \in (0, \infty]$
the process $Y$ can be constructed from $X$ by removing jumps of size greater than $1$ with suitable rate.
Let $p_D^X(t,x,y)$ be the transition density function of  $X$ on $D$. 
For $\beta \in (0, \infty]$, we define
\begin{align*}
{\cal J}  (x):=\int_{\R^d} \kappa(x, y)\nu(|x-y|)\left(1- \chi(|x-y|)^{-1}\right)dy
\end{align*}
where $\chi(|x-y|)$ is defined in \eqref{e:Exp}.
Then $\|{\cal J}\|_{\infty}\le c_1\int_{|z|\ge 1} \nu(|z|)dz <\infty$. 
By \cite[Lemma 3.6]{BBCK} we have 
\begin{align}
p_D(t,x,y)\,\le\, e^{T\|\cal J\|_{\infty}} p_D^X(t,x,y) \quad \mbox{ for any } (t,x,y) \in (0, T] \times D \times D. \label{e:u.0}
\end{align}
Thus, the sharp upper bound of of $p_D(t,x,y)$ for  $|x-y|< M$ for some $M>0$ follows from the one of  $p_D^X(t,x,y)$  and \eqref{e:u.0}.
Therefore Combining \eqref{e:u.0},  Propositions~\ref{P:up1}(2) and~\ref{P:X}(2), we have the following result.

\begin{prop}\label{P:up2}
Suppose that the jumping intensity kernel $J$ satisfying {\bf(J2)}.
There exists a positive constant $C_{\ref{P:up2}}=C_{\ref{P:up2}}(\beta, \phi, L_0, L_3,   \eta, R_0,  \Lambda, T)$ such that for every $(t, x, y)\in (0, T]\times D\times D$ we have 
\begin{align}
p_D(t, x, y)\le C_{\ref{P:up2}} \Psi(t,x)\cdot 
\begin{cases}
F_{ C_{\ref{T:2.1}}\wedge\gamma_1, \gamma_1, T}(t, |x-y|/{3})&\mbox{ if } \beta\in[0,\infty),\\
F_{ C_{\ref{T:2.1}}, \gamma_1, T}(t, |x-y|/2)&\mbox{ if } \beta=\infty,
\end{cases}
\end{align}
where $C_{\ref{T:2.1}}$ is the constant in Theorem~\ref{T:2.1} and $\gamma_1$ is the constant in \eqref{e:Exp}.
\end{prop}

Now we are ready to prove the upper bound of Theorem \ref{t:nmain}(1) and Theorem \ref{t:main}(1).

\medskip

\noindent {\bf Proofs of the upper bounds of $p_D(t,x, y)$ in Theorems \ref{t:nmain}(1) and \ref{t:main}(1)}. 
In Proposition \ref{P:up2}(1), we have proved the upper bound of $p_D(t,x, y)$ in Theorem \ref{t:nmain}(1). So we only give the proof of the upper bound of $p_D(t,x, y)$ in Theorem \ref{t:main}(1).

Let  $r_t:=V^{-1}(\sqrt{a_{T, R_0}\cdot t})$ so that $r_t\le A_{\ref{T:exit}}R_0/4<1/4$.
By Proposition \ref{P:up2}  and the symmetry of $p_D(t,x, y)$, 
we only need to prove the upper bound of $p_D(t,x, y)$ for the case  $\delta_D(x)\vee \delta_D(y)<r_t$, which we will assume throughout the proof. 

If $\beta = \infty$ and ${6} <|x-y|\le  {6}(1 \vee C_{\ref{T:2.1}}^{-1})$, by \eqref{e:u.0} and Proposition~\ref{P:X},  we have 
\begin{align*}
p_D(t, x, y)\le c_1 \Psi(t,x)\Psi(t,y) (t/T) \le c_1 \Psi(t,x)\Psi(t,y) (t/T)^{(C_{\ref{T:2.1}} \wedge 1)|x-y|/6}.
\end{align*}
If either the case $\beta\in[0, \infty)$  and $|x-y|\le  {6}(1 \vee C_{\ref{T:2.1}}^{-1})$ holds or the case $\beta = \infty$ and $|x-y|\le  {6}$ holds, by \eqref{e:u.0} and Proposition~\ref{P:X}(2),  we have
\begin{align*}
p_D(t, x, y)\le e^{T\|{\cal J}\|_{\infty}} p_D^X(t,x,y) \le c_2 \Psi(t,x)\Psi(t,y) \left([\phi^{-1}(t)]^{-d}\wedge t\nu(|x-y|)\right).
\end{align*}
Thus,  the upper bound of $p_D(t,x, y)$ in Theorem \ref{t:main}(1) holds for $|x-y|\le  {6}(1 \vee C_{\ref{T:2.1}}^{-1})$.

For the remainder of the proof, we assume that $\delta_D(x)\vee \delta_D(y)<r_t$ and $ |x-y|>  {6}(1 \vee C_{\ref{T:2.1}}^{-1}) $.
For any $x$ with $\delta_D(x)<r_t$, let $z_x\in \partial D$ such that $\delta_D(x)=|z_x-x|$. Let $U_1:=B(z_x, r_t)\cap D$, $U_3:=\{z\in D: |z-x|\ge|x-y|/2\}$, and $U_2:=D\setminus (U_1\cup U_3)$.
Note that $x\in U_1$ and $y\in U_3$ and $|x-y|/2\le |z-y|$ for $z\in U_2$.
Thus, by Proposition~\ref{P:up2} we have   
\begin{align}\label{eq:p_D}
&\sup_{s<t, z\in U_2} p_D(s,z,y)\nn\\
\le&\sup_{s<t, z\in U_2} C_{\ref{P:up2}} \frac{V(\delta_D(y))}{\sqrt{s}}\cdot 
\left(F_{ C_{\ref{T:2.1}}\wedge \gamma_1, \gamma_1, T}(s, |z- y|/{3})\cdot{\bf 1}_{\beta\in[0, \infty)}
+F_{C_{\ref{T:2.1}}, \gamma_1, T}(s, |z- y|/2)\cdot{\bf 1_{\beta=\infty}}\right)\nn\\ 
\le & \,C_{\ref{P:up2}} \,V(\delta_D(y)) \sup_{s<t, |x-y|/2\le |z-y|}\frac{1}{\sqrt{s} }\nn\\
&\qquad \qquad \qquad \cdot  
\left(F_{ C_{\ref{T:2.1}}\wedge \gamma_1, \gamma_1, T}(s, |z- y|/{3})\cdot{\bf 1}_{\beta\in[0, \infty)}
+F_{C_{\ref{T:2.1}}, \gamma_1, T}(s, |z- y|/2)\cdot{\bf 1}_{\beta=\infty}\right)\nn\\ 
\le& \, c_3\,\frac{V(\delta_D(y))}{\sqrt{t}}\cdot  \left(F_{ C_{\ref{T:2.1}}\wedge \gamma_1, \gamma_1, T}(t, |x- y|/{6})\cdot{\bf 1}_{\beta\in[0, \infty)}
+F_{C_{\ref{T:2.1}}, \gamma_1, T}(t, |x- y|/4)\cdot{\bf 1_{\beta=\infty}}\right).
\end{align}
The last inequality is clear for $\beta \in [0, \infty)$ by the definition of  $F_{C_{\ref{T:2.1}}\wedge \gamma_1, \gamma_1, T}(t,r)$ and for $\beta=\infty$  we used the fact that $s \to s^{-1/2} (s/Tr)^{ar}$ is increasing if $ar \ge 1$. 
Hence from \eqref{eq:u2} and \eqref{eq:p_D}, we obtain
\begin{align}\label{main:I}
&\mathbb{P}_x\left(Y_{\tau_{U_1}}\in U_2\right)\Big(\sup_{s<t, z\in U_2} p_D(s, z, y)\Big)\nn\\
&\le c_4 \frac{V(\delta_D(x))}{\sqrt{t}} \frac{V(\delta_D(y))}{\sqrt{t}} \cdot
\begin{cases}
F_{ C_{\ref{T:2.1}}\wedge\gamma_1, \gamma_1, T}(t, |x-y|/{6})&\mbox{ if } \beta\in[0,\infty),\\
F_{ C_{\ref{T:2.1}}, \gamma_1, T}(t, |x-y|/{4})&\mbox{ if } \beta=\infty.
\end{cases}
\end{align}
Also from Lemma~\ref{L:exit2}, we have 
\begin{align}\label{main:II-1}
&\int_0^t \mathbb{P}_x(\tau_{U_1}>s) \mathbb{P}_y(\tau_D>t-s) ds \le \int_0^t \mathbb{P}_x(\tau_{D}>s) \mathbb{P}_y(\tau_D>t-s) ds \nn\\
&\qquad\le C_{\ref{L:exit2}}^2 \, V(\delta_D(x)) V(\delta_D(y))\int_0^t s^{-1/2}  (t-s)^{-1/2} ds \le c_5 \, t \,\frac{V(\delta_D(x))}{\sqrt{t}} \frac{V(\delta_D(y))}{\sqrt{t}}.
\end{align}
For $(u,z) \in U_1 \times U_3$ and $|x-y|>{6>6r_t}$, note that $|u-z|{\ge |x-y|-|x-u|-|z-y|\ge |x-y|/3}$.
Thus, if $\beta\in[0,\infty)$, by \eqref{e:Exp} and {\bf(J2)},
$$\left(\sup_{u\in U_1, z\in U_3} J(u, z)\right)\le 
c_6 e^{-\gamma_1(|x-y|/{3})^{\beta}}
\nu(|x-y|/3)
\le c_7 t^{-1} F_{ \gamma_1, \gamma_1, T}(t, |x-y|/{3}).$$
Combining this with \eqref{main:II-1}, we obtain 
\begin{align}\label{main:II}
&\int_0^t \mathbb{P}_x\left(\tau_{U_1}>s\right) \mathbb{P}_y \left(\tau_D>t-s \right)ds \cdot \left(\sup_{u\in U_1, z\in U_3} J(u, z)\right)\nn\\
&\le c_{8}\frac{V(\delta_D(x))}{\sqrt{t}}\frac{V(\delta_D(y))}{\sqrt{t}}F_{ \gamma_1, \gamma_1,T}(t, |x-y|/{3}).
\end{align}
If $\beta=\infty$, since $|u-z|> 1$, $J(u, z)=0$ on $ U_1 \times U_3$.

Therefore by applying \eqref{main:I} and \eqref{main:II} for $\beta\in[0, \infty)$ and by applying \eqref{main:I} for $\beta=\infty$  in \eqref{eq:ub0} of Lemma~\ref{L:4.1}, we prove the upper bound of  $p_D(t,x, y)$ in Theorem~\ref{t:main}(1)
 for  $\delta_D(x)\vee \delta_D(y)<r_t$ and $ |x-y|> {6}(1 \vee C_{\ref{T:2.1}}^{-1})$.
\end{proof}

\section{Preliminary lower bound estimates }\label{sec:plbd}

In this section, we discuss a preliminary lower bound for $p_D(t, x, y)$. In this section we will always assume that $Y$ is the symmetric pure jump Hunt process with the jumping intensity kernel $J$ satisfying either the conditions {\bf(J1.2)} and  {\bf(J1.3)} or the condition {\bf(J2)}.

Since $Y$ satisfies conditions imposed  in \cite{MR2524930}, using \cite[Theorem 5.2 and Lemma 2.5]{MR2524930}, the proof of the next lemma is the same as that of \cite[Lemma 3.2]{MR3237737}. Thus, we omit the proof. 

\begin{lem}\label{L:plbd1}
Let $a$,$b$ and $T$ be positive constants.
Then 
there exists a constant $C_{\ref{L:plbd1}}=C_{\ref{L:plbd1}}$ $(a, b, {L_0, \phi}, T)>0$ such that for all $\lambda \in (0, T]$ we have
\begin{align}
\inf_{y\in\R^d \atop |y -z| \le b \phi^{-1}(\lambda)} \p_y \left(\tau_{B(z, 2b \phi^{-1}(\lambda) )} > a\lambda \right) \ge C_{\ref{L:plbd1}}.
\end{align}
\end{lem}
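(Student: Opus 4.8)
The plan is to combine a single--step ``stay in the ball and come back near the centre'' estimate with the Markov property, iterating a bounded number of times to reach $a\lambda$. Fix $z$ and $\lambda\le T$, write $r:=\phi^{-1}(\lambda)$, and set $B:=B(z,2br)$ and $B':=B(z,br)$, so that $\overline{B'}\subset B$ and every $w\in\overline{B'}$ satisfies $B(w,br)\subset B$. Since $\{\,|y-z|\le b\phi^{-1}(\lambda)\,\}=\overline{B'}$, it suffices to bound $\p_y(\tau_B>a\lambda)$ from below uniformly over $y\in\overline{B'}$. The key claim is the following single--step bound: there are $\theta_0\in(0,1)$ depending on $b,\phi,L_0$ and $\epsilon_0\in(0,1)$ depending on $\phi,L_0$ (and $d$) such that for every $\lambda\le T$ and every $w\in\overline{B'}$,
\[
\p_w\big(\tau_B>\theta_0\lambda,\ Y_{\theta_0\lambda}\in\overline{B'}\big)\ \ge\ \epsilon_0 .
\]

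To prove the claim I would write $\p_w(\tau_B>\theta_0\lambda,\ Y_{\theta_0\lambda}\in\overline{B'})\ge\p_w(Y_{\theta_0\lambda}\in\overline{B'})-\p_w(\tau_B\le\theta_0\lambda)$ and bound the two terms separately. For the first term, by {\bf(WS)} (equivalently \eqref{i:sc}) one can choose $\theta_0$ small enough, depending on $b$, that $\phi^{-1}(\theta_0\lambda)\le br$ for all $\lambda\le T$; then the set $\overline{B'}\cap B(w,\phi^{-1}(\theta_0\lambda))$ contains a ball of radius $\tfrac12\phi^{-1}(\theta_0\lambda)$ (move from $w$ towards $z$ by $\tfrac12\phi^{-1}(\theta_0\lambda)$), hence has Lebesgue measure at least $c_d[\phi^{-1}(\theta_0\lambda)]^d$, while for $u$ in that set $|w-u|\le\phi^{-1}(\theta_0\lambda)$ forces $[\phi^{-1}(\theta_0\lambda)]^{-d}\wedge\theta_0\lambda\,\nu(|w-u|)=[\phi^{-1}(\theta_0\lambda)]^{-d}$, so Theorem~\ref{T:n2.1} (or Theorem~\ref{T:2.1} when $\beta=\infty$, which additionally forces $\theta_0$ so small that $\phi^{-1}(\theta_0\lambda)<1$) gives $p(\theta_0\lambda,w,u)\ge C_{\ref{T:n2.1}}^{-1}[\phi^{-1}(\theta_0\lambda)]^{-d}$; integrating yields $\p_w(Y_{\theta_0\lambda}\in\overline{B'})\ge C_{\ref{T:n2.1}}^{-1}c_d=:p_1>0$, a constant independent of $\lambda$, $b$ and $w$. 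For the second term, since $B(w,br)\subset B$ we have $\tau_B\ge\tau_{B(w,br)}$, so Lemma~\ref{st5n} together with the consequence $\phi(br)\ge c_b\lambda$ of {\bf(WS)} gives $\p_w(\tau_B\le\theta_0\lambda)\le C_{\ref{st5n}}\,\theta_0\lambda/\phi(br)\le c_b^{-1}C_{\ref{st5n}}\theta_0$; shrinking $\theta_0$ once more so that this is $\le p_1/2$ proves the claim with $\epsilon_0:=p_1/2$.

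Granted the claim, the proof is finished by iteration. Put $m:=\lceil a/\theta_0\rceil+1$, a number depending only on $a,b,\phi,L_0$, so that $m\theta_0\lambda>a\lambda$. For $k=1,\dots,m$ let $E_k$ be the event that $Y_s\in B$ for all $s\in[(k-1)\theta_0\lambda,k\theta_0\lambda]$ and $Y_{k\theta_0\lambda}\in\overline{B'}$; then $E_k$ is determined by $(Y_s)_{s\le k\theta_0\lambda}$, and on $\bigcap_{k=1}^mE_k$ one has $\tau_B\ge m\theta_0\lambda>a\lambda$. Applying the Markov property at time $(k-1)\theta_0\lambda$ together with the single--step bound (which is applicable because on $E_1\cap\dots\cap E_{k-1}$ the process sits at $Y_{(k-1)\theta_0\lambda}\in\overline{B'}$, and $y\in\overline{B'}$ when $k=1$) gives $\p_y(E_1\cap\dots\cap E_k)\ge\epsilon_0\,\p_y(E_1\cap\dots\cap E_{k-1})$, hence by induction $\p_y(\tau_B>a\lambda)\ge\epsilon_0^{\,m}=:C_{\ref{L:plbd1}}$, which depends only on $a,b,L_0,\phi,T$.

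The main obstacle is the single--step bound, and more precisely the fact that the return must be to the \emph{fixed} concentric ball $\overline{B'}$, not to a ball around the moving starting point --- this is what lets the iteration preserve a uniform geometry and makes the number of steps $m$ independent of $\lambda$. Landing in a fixed ball cannot be arranged from the exit--time estimate of Lemma~\ref{st5n} alone: one genuinely needs a pointwise lower bound on the free transition density, i.e.\ the near--diagonal part of Theorems~\ref{T:n2.1}--\ref{T:2.1}, combined with the time/space balance from {\bf(WS)} that makes $\theta_0\lambda$ short enough for the exit probability to be only a fraction of the landing probability $p_1$. The remaining points (the finite--range case $\beta=\infty$ via Theorem~\ref{T:2.1}, the measurability of the $E_k$, and checking that every constant is free of $\lambda$) are routine.
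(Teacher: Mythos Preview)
Your overall strategy --- a single-step ``stay in the big ball and return to the concentric small ball'' estimate plus Markov iteration --- is exactly the standard approach and is essentially what \cite[Lemma~3.2]{MR3237737} does. The iteration part is fine. The problem is in the ingredients of your single-step bound.

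The genuine gap is circularity. You invoke the \emph{lower} bound of Theorem~\ref{T:n2.1} (and of Theorem~\ref{T:2.1}) to get $p(\theta_0\lambda,w,u)\ge C^{-1}[\phi^{-1}(\theta_0\lambda)]^{-d}$. But in this paper that lower bound is proved only at the end of Section~\ref{sec:plbd}, via Propositions~\ref{pl:st1}--\ref{pl:st3}, each of which uses Lemma~\ref{L:plbd1}. So you are assuming the statement you are trying to prove. The paper deliberately avoids this loop: it imports the near-diagonal lower bound from \cite[Theorem~5.2]{MR2524930}, which is established independently of Lemma~\ref{L:plbd1}. If you replace your appeal to Theorem~\ref{T:n2.1} by that external result, your first-term estimate goes through.

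A second, smaller issue: you bound the exit probability with Lemma~\ref{st5n}, but that lemma is proved in Section~\ref{genY} under the standing hypothesis \textbf{(K$_\eta$)}, and its constant depends on $L_3$. Section~\ref{sec:plbd} assumes only \textbf{(J1.2)}--\textbf{(J1.3)} or \textbf{(J2)}, and the constant $C_{\ref{L:plbd1}}$ is stated to depend only on $a,b,L_0,\phi,T$. The paper sidesteps this by using \cite[Lemma~2.5]{MR2524930} for the exit-time estimate, which does not need \textbf{(K$_\eta$)}. With these two substitutions your argument is correct and matches the paper's (omitted) proof.
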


Let $D$ be an arbitrary non-empty open set, and $a$ and $T$ be positive constants.
We use the convention that  $\delta_D(\cdot)\equiv \infty$ when $D=\Rd$ to derive the lower bound of $p(t, x, y)$ in Theorem \ref{T:n2.1} and \ref{T:2.1} simultaneously.
\medskip

Using \cite[Theorem 5.2]{MR2524930} and Lemma~\ref{L:plbd1}, the proof of the following Proposition is similar to that of \cite[Proposition 3.3]{MR3237737}.
Thus, we omit the proof. 

\begin{prop}\label{pl:st1}
Let $D$ be an arbitrary open set and let $a$ and $T$ be positive constants.
Suppose that $(t, x, y)\in (0, T]\times D\times D$, with $\delta_D(x) \ge a  \phi^{-1}(t) \geq 2|x-y|$.
Then
there exists a positive constant $C_{\ref{pl:st1}}=C_{\ref{pl:st1}}(a, {L_0, \phi}, T)$ such that $p_D(t,x,y) \ge C_{\ref{pl:st1}}[\phi^{-1}(t)]^{-d}$.
\end{prop}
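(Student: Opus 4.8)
The plan is to follow \cite[Proposition~3.3]{MR3237737}: reduce the bound to one for the process killed on a ball, and there use the interior near-diagonal lower estimate that is assembled from the survival bound of Lemma~\ref{L:plbd1}, the L\'evy system \eqref{e:levy}, and \cite[Theorem~5.2]{MR2524930}.

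First I would put $B:=B(x,a\phi^{-1}(t))$. Since $\delta_D(x)\ge a\phi^{-1}(t)$ we have $B\subset D$, so by domain monotonicity of the Dirichlet heat kernel $p_D(t,x,y)\ge p_B(t,x,y)$; and since $2|x-y|\le a\phi^{-1}(t)$ both $x$ and $y$ lie in $B(x,\tfrac a2\phi^{-1}(t))$, i.e.\ at distance at least $\tfrac a2\phi^{-1}(t)$ from $\partial B$, which is comparable to the radius of $B$. Hence it suffices to produce $C=C(a,L_0,\phi,T)>0$ with $p_B(t,x,y)\ge C[\phi^{-1}(t)]^{-d}$.

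For this I would invoke, exactly as in \cite[Prop.~3.3]{MR3237737}, the interior estimate: there is $c=c(a,L_0,\phi,T)>0$ such that $p_B(t,u,v)\ge c[\phi^{-1}(t)]^{-d}$ for every $t\in(0,T]$ and all $u,v\in B(x,\tfrac34 a\phi^{-1}(t))$. Its proof is the standard chaining argument: by Lemma~\ref{L:plbd1}, starting from any point of a ball of radius $\asymp\phi^{-1}(t)$ contained in $B$, the process stays in the concentric ball of twice the radius for a time comparable to $t$ with probability bounded below; the L\'evy system \eqref{e:levy}, together with $J\asymp\nu$ on the bounded range of distances occurring inside $B$ (by {\bf(J1.3)}, resp.\ by {\bf(J2)} since $\chi$ is bounded on bounded sets) and the doubling \eqref{nu1}, bounds from below the rate of a single jump between two such sub-balls; and a Chapman--Kolmogorov chain through a fixed number $N=N(a,\phi)$ of sub-balls inside $B$ --- with $\phi^{-1}(t/N)$ an arbitrarily small multiple of $a\phi^{-1}(t)$ by \eqref{i:sc} and {\bf(WS)} --- multiplies these ingredients together. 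Applying the interior estimate with $u=x$, $v=y$ then completes the proof, since $x,y\in B(x,\tfrac a2\phi^{-1}(t))\subset B(x,\tfrac34 a\phi^{-1}(t))$.

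The delicate part is the uniformity of $c$: one has to check that $N$ and the radii of all sub-balls depend only on $a$ and the scaling data of $\phi$, not on $t$ or on the position of $x$; that these sub-balls remain uniformly far from $\partial B$, so that Lemma~\ref{L:plbd1} applies with one fixed constant at each of the $N$ steps; and that degenerate configurations, notably $x=y$ where the chain becomes a loop, are included. All of this is routine bookkeeping, identical to \cite[Proposition~3.3]{MR3237737}, which is why the paper simply cites it.
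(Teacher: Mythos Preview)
Your proposal is correct and matches the paper's approach: both reduce to a ball by domain monotonicity and defer to \cite[Proposition~3.3]{MR3237737}, citing Lemma~\ref{L:plbd1} and \cite[Theorem~5.2]{MR2524930} as the ingredients.

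One small remark on your elaboration of the mechanism. The chaining-through-sub-balls argument using the L\'evy system that you describe is really the engine of the \emph{off}-diagonal bound (Proposition~\ref{pl:st2}), and by itself it only yields an integrated bound of the form $\int_{B(y,r)}p_B(t,x,z)\,dz\ge c$; turning this into the pointwise estimate $p_B(t,x,y)\ge c[\phi^{-1}(t)]^{-d}$ still requires the regularity input from \cite[Theorem~5.2]{MR2524930}. In the actual argument of \cite[Proposition~3.3]{MR3237737} that step is more direct: the survival bound of Lemma~\ref{L:plbd1} gives $\int_{B'}p_B(t',x,z)\,dz\ge c$, and the parabolic Harnack/H\"older estimate from \cite{MR2524930} converts this straight into the pointwise lower bound at $y$, with no L\'evy-system chaining needed. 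So the L\'evy system is not one of the ingredients here; it enters only in Proposition~\ref{pl:st2}.
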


From Lemma \ref{L:plbd1} and Proposition \ref{pl:st1} we see that,  
under the condition $\bf(WS)$ on $\phi$, the behavor of $Y$ is locally stable in  terms of $\phi$.

\begin{prop}\label{pl:st2}
Let $D$ be an arbitrary open set and let $a$ and $T$ be positive constants. 
\begin{description}
\item[(1)]
Suppose that the jumping intensity kernel $J$ satisfies the conditions {\bf(J1.2)} and  {\bf(J1.3)}.
Then for every $M>0$,  
 there exists a constant $C_{\ref{pl:st2}.1}=C_{\ref{pl:st2}.1}(a,M, L_0, {\phi}, T)>0$ such that 
 for all $(t, x, y)\in (0, T]\times D\times D$ with $\delta_D(x)\wedge \delta_D (y) \ge a \phi^{-1}(t)$ and $a \phi^{-1}(t) \leq 2|x-y| {\le 2M}$ we have 
 $p_D(t, x, y)\ge C_{\ref{pl:st2}.1}t \nu(|x-y|)$.
\item[(2)] 
Suppose that the jumping intensity kernel $J$ satisfies the condition {\bf(J2)}.
Then there exists a constant $C_{\ref{pl:st2}}=C_{\ref{pl:st2}}(a, L_0, {\phi}, T)>0$ such that for every  $(t, x, y)\in (0, T]\times D\times D$, with $\delta_D(x)\wedge \delta_D (y) \ge a \phi^{-1}(t)$ and $a \phi^{-1}(t) \leq 2|x-y|$, we have  $p_D(t, x, y)\ge C_{\ref{pl:st2}}t 
\nu(|x-y|)/\chi(|x-y|)$.
\end{description}
\end{prop}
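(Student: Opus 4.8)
The plan is to run the standard one--jump lower bound, as in \cite[Proposition 3.4]{MR3237737}; all estimates below depend on the stated parameters only through the comparability \eqref{i:sc} of $\phi^{-1}$ at comparable times. Fix $(t,x,y)$ as in the statement and put $r_t:=\phi^{-1}(t)$. I would choose $\rho_0=c_0 r_t$ with $c_0=c_0(a)>0$ so small that $B(x,\rho_0)$ and $B(y,8\rho_0)$ are disjoint subsets of $D$ (possible since $\delta_D(x)\wedge\delta_D(y)\ge a r_t$ and $|x-y|\ge a r_t/2$), and that for every $z\in B(y,8\rho_0)$ the pair $(z,y)$ satisfies the hypotheses of Proposition \ref{pl:st1} at time $t/2$. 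By the semigroup property and Proposition \ref{pl:st1},
\[
p_D(t,x,y)\ \ge\ \int_{B(y,8\rho_0)}p_D(t/2,x,z)\,p_D(t/2,z,y)\,dz\ \ge\ c_1\,[\phi^{-1}(t)]^{-d}\;\p_x\!\big(Y_{t/2}\in B(y,8\rho_0),\ t/2<\tau_D\big),
\]
so it remains to bound the survival probability on the right below by $c\,t\,[\phi^{-1}(t)]^{d}\,q(|x-y|)$, where $q(r):=\nu(r)$ in case (1) and $q(r):=\nu(r)/\chi(r)$ in case (2) (for $\beta=\infty$ and $|x-y|>1$ the asserted bound is trivial, since $q\equiv0$ there).

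For this I would use the three--stage event: the process stays in $B(x,\rho_0)$ up to its exit time $\sigma:=\tau_{B(x,\rho_0)}$; this exit happens before time $t/4$ and is a jump into $B(y',\rho_0/2)$, where $y'$ is the point of the segment $[x,y]$ with $|y'-y|=3\rho_0$; and after the jump the process stays in $B(y,8\rho_0)\subset D$ until time $t/2$. Since $B(y',\rho_0/2)\subset B(y,8\rho_0)$, the strong Markov property at $\sigma$ together with Lemma \ref{L:plbd1} (ball of radius $8\rho_0\asymp\rho_0$ about $y$, time $\asymp t$) makes the last stage have probability $\ge c_2>0$ uniformly in the landing point. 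As $B(y',\rho_0/2)$ is disjoint from $B(x,\rho_0)$, the exit in the middle stage is necessarily by a single jump, so, by the Lévy system \eqref{e:levy} (and since $Y_u\in B(x,\rho_0)$ for $u<\sigma$),
\[
\p_x\!\big(Y_{t/2}\in B(y,8\rho_0),\ t/2<\tau_D\big)\ \ge\ c_2\;\E_x\Big[\int_0^{\sigma\wedge(t/4)}\!\int_{B(y',\rho_0/2)}J(Y_u,v)\,dv\,du\Big].
\]

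The reason for centring the target ball at $y'$ is that, for $c_0$ small, every $u<\sigma$ and $v\in B(y',\rho_0/2)$ produce a jump of length $|Y_u-v|\in[\tfrac12|x-y|,\,|x-y|)$; in particular $|Y_u-v|<|x-y|$, so, since $r\mapsto\nu(r)$ is nonincreasing and $\chi$ is nondecreasing, $\nu(|Y_u-v|)\ge\nu(|x-y|)$ and $\chi(|Y_u-v|)^{-1}\ge\chi(|x-y|)^{-1}$. Therefore, in case (2), \textbf{(J2)} and \eqref{a:kappa} give $J(Y_u,v)\ge L_0^{-1}\nu(|Y_u-v|)\chi(|Y_u-v|)^{-1}\ge L_0^{-1}q(|x-y|)$ (with $q(|x-y|)=0$ taking care of the range $|x-y|>1$ when $\beta=\infty$); in case (1), since $|Y_u-v|<|x-y|\le M$, condition \textbf{(J1.3)} applied with parameter $M$ gives $J(Y_u,v)\ge C_M^{-1}\nu(|Y_u-v|)\ge C_M^{-1}q(|x-y|)$. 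In either case $\int_{B(y',\rho_0/2)}J(Y_u,v)\,dv\ge c_3\,[\phi^{-1}(t)]^{d}\,q(|x-y|)$ uniformly in $u<\sigma$; combined with $\E_x[\sigma\wedge(t/4)]\ge(t/4)\,\p_x(\sigma>t/4)\ge c_4\,t$ (Lemma \ref{L:plbd1} applied to $B(x,\rho_0)$) this bounds the survival probability below by $c_5\,t\,[\phi^{-1}(t)]^{d}\,q(|x-y|)$, and inserting it into the first display cancels the powers of $\phi^{-1}(t)$ and yields $p_D(t,x,y)\ge c_6\,t\,q(|x-y|)$, which is the claim (with $q=\nu$ for (1) and $q=\nu/\chi$ for (2)).

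The step I expect to need the most care is making the geometric set-up self-consistent with a single choice of $c_0=c_0(a)$: it must simultaneously force $B(x,\rho_0)$ and $B(y,8\rho_0)$ to lie in $D$ and be disjoint, place $y'$ and $B(y',\rho_0/2)$ inside $B(y,8\rho_0)$, keep all contributing jump lengths in $[\tfrac12|x-y|,|x-y|)$ (so that the monotonicity of $\nu$ and of $\chi$ makes the kernel comparison immediate, with no enlargement of the $\chi$-argument), and allow Proposition \ref{pl:st1} (whose constant depends on its own lower-bound parameter) and Lemma \ref{L:plbd1} (whose constant depends on the ball-to-time ratio) to be invoked with fixed parameters — all of which is possible precisely because $t$, $t/2$ and $t/4$ are comparable and \eqref{i:sc} transfers this to $\phi^{-1}$. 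The only other ingredient, routine for pure-jump processes, is that the first exit of $B(x,\rho_0)$ at a point far from it occurs by a single jump, so that the Lévy-system identity captures exactly the middle-stage event.
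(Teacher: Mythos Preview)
Your argument is correct and follows essentially the same route as the paper: a one-jump lower bound via the L\'evy system, with a target ball centred at a point on the segment $[x,y]$ close to $y$ so that all contributing jump lengths are strictly less than $|x-y|$ (allowing the monotonicity of $\nu$ and $\chi$ to be used directly), combined with Lemma~\ref{L:plbd1} for the pre- and post-jump survival estimates and Proposition~\ref{pl:st1} with the semigroup property to convert the hitting probability into a heat-kernel bound. The paper's choice of radii ($6^{-1}a\phi^{-1}(t)$, $2^{-5}a\phi^{-1}(t)$, etc.) differs from yours only cosmetically, and it defers the final semigroup step to \cite[Proposition~3.5]{MR3237737} whereas you write it out; your additional observation that $|Y_u-v|\ge|x-y|/2$ is not actually needed (only the upper bound $|Y_u-v|<|x-y|$ is used), but does no harm.
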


\proof
We first give the proof of (2). 
By Lemma~\ref{L:plbd1}, there exists    $c_1=c_1(a,{L_0, \phi}, T)>0$ such that 
$$\inf_{\{|z-y|\le 4^{-1}a\phi^{-1}(t)\}} \p_z(\tau_{B(z, 6^{-1}a\phi^{-1}(t))}>t)\ge c_1.$$
Thus by
the strong Markov property
$$\p_x \left( Y^D_t \in B \big( y, \,  2^{-1} a \phi^{-1}(t) \big) \right) \ge 
c_1\p_x \left(Y^D \hbox{ hits the ball } B(y, \,
4^{-1}a \phi^{-1}(t))\mbox{ by time } t \right).
$$
Using this and the L\'evy system in \eqref{e:levy}, we obtain
\begin{align}
&\p_x \left( Y^D_t \in B \big( y, \,  2^{-1} a \phi^{-1}(t) \big) \right)\nn\\  
\ge  &\, c_1\p_x(Y_{t\wedge \tau_{B(x, 6^{-1} a \phi^{-1}(t))}}^D\in B(y,\, 4^{-1}a \phi^{-1}(t))\hbox{ and }t \wedge \tau_{B(x, 6^{-1}a  \phi^{-1}(t))} \hbox{ is a jumping time })\nonumber \\
%
= &\,c_1 \E_x \left[\int_0^{t\wedge \tau_{B(x, 6 \cdot 2^{-5}a \phi^{-1}(t))}} \int_{B(y, \, 4^{-1}a \phi^{-1}(t))}
J(Y_s, u) duds \right]. \label{e:nv1}
\end{align}
Lemma~\ref{L:plbd1} also implies that 
\begin{equation}\label{eq:lowtau}
\E_x \left[ t \wedge
\tau_{B(x, 6 \cdot 2^{-5}a \phi^{-1}(t))} \right] \,\ge\,t \, \p_x
\left(\tau_{B(x, 6 \cdot 2^{-5}a \phi^{-1}(t))} \ge  t \right)
 \,\ge\, c_2\,t
\qquad \hbox{ for all } t\in (0, T].
\end{equation}

Let  $w$ be the point on the line connecting $x$ and $y$ (i.e., $|x-y|=|x-w|+|w-y|$) such that $|w-y|=7 \cdot 2^{-5} a \phi^{-1}(t)$, 
then $B(w, 2^{-5} a \phi^{-1}(t)) \subset B(y, \, 4^{-1}a \phi^{-1}(t))$.
Moreover, for every $(z,u) \in  B(x, 6 \cdot 2^{-5}a \phi^{-1}(t)) \times B(w, 2^{-5} a \phi^{-1}(t)) $, we have 
\begin{align*}
|z-u| &< 6^{-1}a \phi^{-1}(t)+ 2^{-5} a \phi^{-1}(t)+|x-w|\nn\\
& =|x-y|+(6 \cdot 2^{-5}+2^{-5}-7 \cdot 2^{-5})a \phi^{-1}(t) =|x-y|
\end{align*}
and thus $B(w, 2^{-5} a \phi^{-1}(t)) \subset \{u:|z-u|<|x-y|\}$.
Combining this result with {\bf(J2)}, \eqref{a:kappa} and \eqref{eq:lowtau}, we obtain 
\begin{align}
 &\E_x \left[\int_0^{t\wedge \tau_{B(x, 6 \cdot 2^{-5}a \phi^{-1}(t))}} \int_{B(y, \, 4^{-1}a \phi^{-1}(t))}
J(Y_s, u) duds \right] \nn\\
\ge &\,\E_x \left[\int_0^{t\wedge \tau_{B(x, 6 \cdot 2^{-5}a \phi^{-1}(t))}} \int_{B(w, 2^{-5} a \phi^{-1}(t))}
J(Y_s, u) {\bf 1}_{\{ |Y_s -u| <|x-y| \}} duds \right]\nn \\
\ge &\,L_0^{-1} \E_x \left[ t \wedge
\tau_{B(x, 6 \cdot 2^{-5}a \phi^{-1}(t))} \right]   |B(w, 2^{-5} a \phi^{-1}(t))| \nu(|x-y|)/\chi(|x-y|) \nn\\
>&\, c_3   t[\phi^{-1}(t)]^d \nu(|x-y|)/\chi(|x-y|). \label{e:nv2}
\end{align} 

Then, using the semigroup property along with  Proposition~\ref{pl:st1}, \eqref{e:nv2} and \eqref{i:sc}, the proposition follows from the proof of \cite[Proposition 3.5]{MR3237737}.

 The proof of (1) is identical to the that of (2) except that we apply  {\bf(J1.3)} in \eqref{e:nv2} instead of {\bf(J2)} and \eqref{a:kappa}.
\qed

\medskip 

Combining Propositions~\ref{pl:st1} and~\ref{pl:st2},
we obtain the following preliminary lower bound of $p_D(t,x,y)$.
Note that the lower bound in Proposition \ref{pl:st3}(1) is the sharp interior lower bound of $p_D(t,x,y)$ under the conditions {\bf(J1.2)} and  {\bf(J1.3)}.
Moreover, under the condition {\bf(J2)}, the lower bound in Proposition \ref{pl:st3}(2)
that yields the sharp interior lower bound of $p_D(t,x,y)$ for the case $\beta\in[0, 1]$ and the case $\beta\in(1, \infty]$ with $|x-y|<1$.

\begin{prop}\label{pl:st3}
Let $D$ be an arbitrary open set and let $a$ and $T$ be positive constants.
\begin{description}
\item[(1)]
Suppose that the jumping intensity kernel $J$ satisfies the conditions {\bf(J1.2)} and  {\bf(J1.3)}.
Then, for every $(t, x, y)\in (0, T]\times D\times D$ and $M>0$, with $\delta_D(x)\wedge \delta_D(y) \ge a \phi^{-1}(t)$ and $|x-y|< M $, there exists a constant $C_{\ref{pl:st3}.1}=C_{\ref{pl:st3}.1}(a, M, L_0, {\phi}, T)>0$ such that
\begin{align*}
p_D(t, x, y)\,\ge \,
C_{\ref{pl:st3}.1}\Big([\phi^{-1}(t)]^{-d}\wedge t\nu(|x-y|)\Big).
\end{align*}
\item[(2)]
Suppose that the jumping intensity kernel $J$ satisfies the condition {\bf(J2)}.
Then, for every $(t, x, y)\in (0, T]\times D\times D$, with $\delta_D(x)\wedge \delta_D(y) \ge a \phi^{-1}(t)$, there exists a constant $C_{\ref{pl:st3}}=C_{\ref{pl:st3}}(a, L_0, {\phi}, T)>0$ such that
\begin{align*}
p_D(t, x, y)\,\ge \,
C_{\ref{pl:st3}}\Big([\phi^{-1}(t)]^{-d}\wedge t \nu(|x-y|)/\chi(|x-y|)\Big).
\end{align*}
\end{description}
\end{prop}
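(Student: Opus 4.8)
The plan is to run a dichotomy on the size of $|x-y|$ relative to $\phi^{-1}(t)$ and to invoke whichever of Propositions~\ref{pl:st1} and~\ref{pl:st2} is applicable. Because the right-hand side of the asserted bound is a \emph{minimum} of two quantities, in each branch I will only need to produce a lower bound by the relevant \emph{one} of those two terms, so no comparison between them is required.

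Suppose first that $2|x-y|\le a\phi^{-1}(t)$. Then $\delta_D(x)\ge\delta_D(x)\wedge\delta_D(y)\ge a\phi^{-1}(t)\ge 2|x-y|$, so the hypotheses of Proposition~\ref{pl:st1} are met and $p_D(t,x,y)\ge C_{\ref{pl:st1}}[\phi^{-1}(t)]^{-d}$. Since $[\phi^{-1}(t)]^{-d}$ is one of the two terms appearing in the minimum on the right of the assertion, this already yields $p_D(t,x,y)\ge C_{\ref{pl:st1}}\big([\phi^{-1}(t)]^{-d}\wedge t\nu(|x-y|)/\chi(|x-y|)\big)$ in case (2), and likewise the bound of case (1) with $\chi$ removed.

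Suppose next that $2|x-y|\ge a\phi^{-1}(t)$. In case (2) the hypotheses of Proposition~\ref{pl:st2}(2) hold verbatim, giving $p_D(t,x,y)\ge C_{\ref{pl:st2}}t\nu(|x-y|)/\chi(|x-y|)$; in case (1), since moreover $|x-y|<M$ forces $a\phi^{-1}(t)\le 2|x-y|\le 2M$, Proposition~\ref{pl:st2}(1) applies and gives $p_D(t,x,y)\ge C_{\ref{pl:st2}.1}\,t\nu(|x-y|)$. In either case the quantity produced is precisely the \emph{other} term in the minimum, so the asserted lower bound follows. Taking $C_{\ref{pl:st3}}$ (resp.\ $C_{\ref{pl:st3}.1}$) to be the smaller of the two constants obtained in the two branches completes the proof, with the dependence on $(a,L_0,\phi,T)$ (resp.\ additionally on $M$) inherited from Propositions~\ref{pl:st1} and~\ref{pl:st2}.

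I do not anticipate any genuine obstacle here: the only points to verify are that the standing hypothesis $\delta_D(x)\wedge\delta_D(y)\ge a\phi^{-1}(t)$ indeed supplies the hypotheses of the cited propositions in each branch (immediate) and that the two branches together cover all of $(0,T]\times D\times D$. If one preferred a more self-contained statement one could also record, using {\bf(WS)} and $\nu(r)=1/(\phi(r)r^d)$, that at the threshold $2|x-y|\asymp\phi^{-1}(t)$ the competing quantities $[\phi^{-1}(t)]^{-d}$ and $t\nu(|x-y|)$ are comparable with constants depending only on $a,\la,\ua,\lC,\uC$; but the minimum structure of the target makes this comparison unnecessary for the argument above.
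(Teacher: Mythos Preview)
Your proof is correct and is exactly the argument the paper has in mind: the paper simply says ``Combining Propositions~\ref{pl:st1} and~\ref{pl:st2}, we obtain the following preliminary lower bound,'' and your dichotomy on $2|x-y|\lessgtr a\phi^{-1}(t)$ is precisely that combination spelled out.
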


For the remainder of this section, assume that the jumping intensity kernel $J$ satisfies the condition {\bf(J2)} for $\beta\in(1, \infty]$ with  $|x-y|\ge 1$.
Also, we assume that $D$ is an {\it connected} open set with the following property: there exist $\lambda_1 \in [1, \infty)$ and $\lambda_2 \in (0, 1]$ such that for every $r \le 1$ and  $x,y$ in the same component of $D$ with $\delta_D(x)\wedge \delta_D(y)\ge r$ there exists in $D$ a length parameterized rectifiable curve $l$ connecting $x$ to $y$ with the length $|l|$ of $l$  less than or equal to $\lambda_1|x-y|$ and $\delta_D(l(u))\geq\lambda_2 r$ for $u\in[0,|l|].$ 

Now we prove the preliminary lower bound of $p_D(t,x,y)$ separately for the case $\beta=\infty$ and the case $\beta\in(1, \infty)$. We will closely follow the proofs of \cite[Theorem 3.6]{CKK} and \cite[Theorem 5.5]{MR2806700}.

\begin{prop}\label{pl:st4}
Let $\beta=\infty$.  Suppose that $T>0$ and $a \in (0, \left(4\phi^{-1}(T)\right)^{-1}]$.
Then there exist constants $C_{\ref{pl:st4}.i}=C_{\ref{pl:st4}.i}(a, L_0, \phi, T, \lambda_1, \lambda_2)>0$, $i=1,2$, such that for any $x, y \in D$ with $\delta_D(x)\wedge\delta_D(y) \ge a \phi^{-1}(t)$, $|x-y|\ge 1$, and $t \le T$ we have  
\begin{align*}
p_D(t, x, y)\,\ge \,C_{\ref{pl:st4}.1} \left(\frac{t}{T|x-y|}\right)^{C_{\ref{pl:st4}.2}|x-y|}.
\end{align*}
\end{prop}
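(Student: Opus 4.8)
The plan is to follow the chaining technique of \cite[Theorem 3.6]{CKK} and \cite[Theorem 5.5]{MR2806700}. Since $\beta=\infty$, the kernel $J$ is supported on $\{|x-y|\le 1\}$, so a trajectory joining $x$ to a point $y$ with $|x-y|\ge 1$ must make at least $\asymp|x-y|$ ``unit steps'', and the clean way to turn this into a lower bound for the killed density is to iterate the Chapman--Kolmogorov identity for $p_D$ through a string of small balls laid along a curve in $D$ that stays away from $\partial D$, estimating each short link by the interior bound of Proposition \ref{pl:st3}(2).

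First I would build the chain. Fix $t\le T$ and set $R:=|x-y|$. Since $a\le(4\phi^{-1}(T))^{-1}$ we have $a\phi^{-1}(t)\le\tfrac14<1$, and as $D$ is connected with $\delta_D(x)\wedge\delta_D(y)\ge a\phi^{-1}(t)$, the curve property assumed of $D$ furnishes a length-parametrized rectifiable curve $l\subset D$ from $x$ to $y$ with $|l|\le\lambda_1 R$ and $\delta_D(l(u))\ge\lambda_2 a\phi^{-1}(t)$ for all $u$. Let $n:=\lceil 2\lambda_1 R\rceil$ and choose $x=z_0,z_1,\dots,z_n=y$ equally spaced on $l$ by arc length, so $|z_{i-1}-z_i|=|l|/n\le\lambda_1 R/n\le\tfrac12$, while $2\lambda_1 R\le n\le 4\lambda_1 R$ (using $R\ge1$), i.e. $n\asymp R$. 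Put $B_i:=B(z_i,r_0)$ with $r_0:=\tfrac12\lambda_2 a\,\phi^{-1}(t/n)$; then every $w_i\in B_i$ satisfies $\delta_D(w_i)\ge\tfrac12\lambda_2 a\phi^{-1}(t)\ge\tfrac12\lambda_2 a\phi^{-1}(t/n)$ and $|w_i-w_{i+1}|\le\tfrac12+2r_0<1$, and the same two facts hold with $w_0:=x$ or $w_n:=y$ in the role of an interior $w_i$.

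Next I would run the chain. Iterating Chapman--Kolmogorov $n$ times,
$$p_D(t,x,y)=\int_{D^{n-1}}p_D(\tfrac{t}{n},x,w_1)\Big(\prod_{i=1}^{n-2}p_D(\tfrac{t}{n},w_i,w_{i+1})\Big)p_D(\tfrac{t}{n},w_{n-1},y)\,dw_1\cdots dw_{n-1},$$
and restricting the integral to $w_1\in B_1,\dots,w_{n-1}\in B_{n-1}$ only decreases it. On that set every argument of every factor stays at distance $\ge\tfrac12\lambda_2 a\phi^{-1}(t/n)$ from $\partial D$, so Proposition \ref{pl:st3}(2) applies (with its parameter ``$a$'' taken to be $\tfrac12\lambda_2 a$); since $|w_i-w_{i+1}|\le 1$, $\chi\equiv 1$ on $(0,1]$ when $\beta=\infty$, and $\nu$ is decreasing, it yields $p_D(\tfrac{t}{n},w_i,w_{i+1})\ge C\big([\phi^{-1}(t/n)]^{-d}\wedge c_\nu\tfrac{t}{n}\big)$ with $c_\nu:=\nu(1)$ and $C=C_{\ref{pl:st3}}(\tfrac12\lambda_2 a,L_0,\phi,T)$. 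Since $|B_i|=\omega_d r_0^d$ with $\omega_d:=|B(0,1)|$, this gives
$$p_D(t,x,y)\ \ge\ \big(C\,([\phi^{-1}(t/n)]^{-d}\wedge c_\nu\tfrac{t}{n})\big)^{n}\big(\tfrac12\omega_d(\lambda_2 a)^d\big)^{n-1}[\phi^{-1}(t/n)]^{d(n-1)}.$$

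It remains to check that this is $\ge C_{\ref{pl:st4}.1}(t/(TR))^{C_{\ref{pl:st4}.2}R}$. The minimum is dealt with first: $[\phi^{-1}(t/n)]^{-d}<c_\nu t/n$ forces $\phi^{-1}(t/n)$, and hence (as $t\le T$) $n$ and thus $R$, bounded above -- a regime in which both sides of the target inequality are comparable to positive constants, so a small enough $C_{\ref{pl:st4}.1}$ handles it. In the remaining regime the minimum equals $c_\nu t/n$, and inserting $r_0=\tfrac12\lambda_2 a\phi^{-1}(t/n)$ together with $\phi^{-1}(t/n)\ge C_I^{-1}(t/(nT))^{1/\la}\phi^{-1}(T)$ from \eqref{i:sc}, the bound reads $\ge c_1\,(t/(TR))\,\big(B\,(t/(TR))^{1+d/\la}\big)^{n-1}$ for positive constants $c_1,B$ depending only on the admissible parameters. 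Since $t/(TR)\le 1$, $n\le 4\lambda_1 R$, and $t/(TR)$ bounded below again forces $R$ bounded above, an elementary case analysis (on whether $B\gtrless 1$ and on whether $t/(TR)$ lies below a fixed threshold) absorbs the residual factor $B^{\,n}$ into $(t/(TR))^{C_{\ref{pl:st4}.2}R}$, and one checks that any $C_{\ref{pl:st4}.2}>4\lambda_1(1+d/\la)+1$ works with $C_{\ref{pl:st4}.1}$ chosen small. The only genuine requirement is that the chain have \emph{exactly} $\asymp|x-y|$ links -- the exponential decay in $|x-y|$ being precisely the accumulation of that many ``small'' factors $c_\nu t/n$ -- which is why the $z_i$ must sit at mutual distance $\asymp 1$, guaranteed by the arc-length parametrization; I expect the final constant-bookkeeping in this last paragraph, rather than any conceptual step, to be the most error-prone part.
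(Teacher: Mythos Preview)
Your argument is correct and follows essentially the same chaining strategy as the paper: lay $\asymp R$ balls along the curve $l$, apply Proposition~\ref{pl:st3}(2) on each link, and multiply via Chapman--Kolmogorov. The differences are cosmetic: the paper takes $k$ with $4\lambda_1 R\le k<4\lambda_1 R+1$ (versus your $n=\lceil 2\lambda_1 R\rceil$) and uses the fixed ball radius $r_t=\tfrac12\lambda_2 a\,\phi^{-1}(t)$ rather than your $r_0=\tfrac12\lambda_2 a\,\phi^{-1}(t/n)$; more notably, instead of splitting on which term of the minimum wins, the paper observes directly that $[\phi^{-1}(t/k)]^{-d}\ge[\phi^{-1}(T)]^{-d}\ge c\cdot t/(Tk)$ (since $t/k\le T/4$) and $(t/k)\nu(|y_i-y_{i+1}|)\ge c'\,t/k$ (since $|y_i-y_{i+1}|\le\tfrac12$), so the minimum is simply $\ge c_2\,t/(Tk)$ in one line, which shortens your final bookkeeping. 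One small inaccuracy: $\chi$ is constant $\chi(0)$ on $(0,1]$, not identically $1$, so your $c_\nu$ should be $\nu(1)/\chi(0)$ rather than $\nu(1)$---harmless for the argument.
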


\proof 
Let $R_1:=|x-y| \ge 1$, and by the assumption on $D$, 
there is a length parameterized curve $l\subset D$ connecting $x$ and $y$ such that the total length $|l|\le \lambda_1 R_1$ and $\delta_D(l(u))\ge \lambda_2 a\phi^{-1}(t)$ for every $u\in [0, |l|]$.
Define $k$ be the integer satisfying $(4 \le ) 4 \lambda_1 R_1\leq k <4 \lambda_1 R_1+1\le 5 \lambda_1 R_1$ and $r_t:= 2^{-1}\lambda_2 a \phi^{-1}(t) \le 8^{-1}$.
For each $i=0,1,2,\ldots,k$, let $x_i:=l(i|l|/k)$ and  $B_i:=B(x_i, r_t)$, 
then $\delta_D(x_i)\ge 2r_t$ and $B_i\subset B(x_i, 2 r_t)\subset D$.
Since $4 \lambda_1 R_1\le k$ for each $y_i\in B_i$, we have  
\begin{eqnarray}\label{e:stst1}
|y_i-y_{i+1}|\leq |y_i-x_i|+|x_i-x_{i+1}|+|x_{i+1}-y_{i+1}| \leq 
\frac{1}{8}+\frac{|l|}{k}+ \frac{1}{8}
\le \frac{\lambda_1 R_1}{4\lambda_1 R_1}+\frac{1}{4}=\frac{1}{2} .
\end{eqnarray}
Moreover $\delta_D(y_i)\ge\delta_D(x_i)-|y_i-x_i |\ge r_t\ge r_{t/k}$.
Thus by Proposition~\ref{pl:st3}(2), there are constants $c_i=c_i(a,L_0, { \phi},T)>0$, $i=1,2$, such that for $(y_i, y_{i+1})\in B_i\times B_{i+1}$ we have
\begin{eqnarray}\label{e:stst2}
p_D(t/k, y_i, y_{i+1})\ge c_1\left(\frac{1}{[\phi^{-1}(t/k)]^d}\wedge \frac{t/k}{\phi(|y_i-y_{i+1}|)|y_i-y_{i+1}|^d}\right)\ge  c_2 \,t/(Tk).
\end{eqnarray}
The last inequality comes from $t/k\le T/4$
for the first part and \eqref{e:stst1} for the second part.
Note that $r_t\ge c_3 (t/kT)^{1/\la}$ for some $c_3=c_3( a, \phi, T, \lambda_2)$ by  \eqref{i:sc}.
Hence, combining these observations and the fact that $k\asymp R_1$, we conclude that 
\begin{align*}
&p_D(t,x,y)\ge \int_{B_1}\ldots\int_{B_{k-1}}p_D(t/k,x,y_1)\ldots p_D(t/k, y_{k-1},y) dy_{k-1}\ldots dy_1\\
&\ge (c_2t(Tk)^{-1})^k\Pi^{k-1}_{i=1} |B_i| \ge ( c_4  t(Tk)^{-1})^{c_5 k}  \ge (c_6t(TR_1)^{-1})^{c_7R_1}  \ge c_8 (t(TR_1)^{-1})^{c_9R_1}.
\end{align*}
\qed

\begin{prop}\label{pl:st5}
Let $\beta\in(1, \infty)$. 
Suppose that $T>0$ and $a \in (0, \left(4\phi^{-1}(T)\right)^{-1}]$. 
Then there exist constants  $C_{\ref{pl:st5}.i}=C_{\ref{pl:st5}.i}(a,\beta, \chi,  L_0, \phi, T,  \lambda_1, \lambda_2)>0$, $i=1,2$ such that for any $x, y \in D$ with $\delta_D(x)\wedge\delta_D(y) \ge a \phi^{-1}(t)$, $|x-y|\ge 1$, and $t \le T$ we have  
\begin{eqnarray*}
p_D(t,x,y)\ge C_{\ref{pl:st5}.1} t \exp\left\{ -C_{\ref{pl:st5}.2} \left(|x-y|\left(\log\frac{T|x-y|}{t}\right)^{\frac{\beta -1}{\beta}}\wedge (|x-y|)^{\beta}\right)\right\}.
\end{eqnarray*}
\end{prop}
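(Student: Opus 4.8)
The plan is to follow the chaining strategy of \cite[Theorem 3.6]{CKK} and \cite[Theorem 5.5]{MR2806700}: we split a path from $x$ to $y$ in $D$ into $n$ links, where $n$ is tuned to balance the entropic cost of chaining against the exponential damping of $J$. Write $R:=|x-y|\ge1$ and $\mathcal{L}:=\log(TR/t)$ (so $\mathcal{L}\ge0$, since $t\le T\le TR$). Because $\exp\{-c(a\wedge b)\}=\exp\{-ca\}\vee\exp\{-cb\}$, it suffices to produce $c_1,c_2>0$ with
\[
p_D(t,x,y)\ \ge\ c_1\,t\,e^{-c_2 R^{\beta}}\qquad\text{and}\qquad p_D(t,x,y)\ \ge\ c_1\,t\,e^{-c_2 R\,\mathcal{L}^{(\beta-1)/\beta}}.
\]
The first of these is immediate from Proposition~\ref{pl:st3}(2): its first alternative satisfies $[\phi^{-1}(t)]^{-d}\ge[\phi^{-1}(T)]^{-d}\ge ct$ for $t\le T$, and its second satisfies, by {\bf(WS)} and \eqref{e:Exp}, $t\,\nu(R)/\chi(R)\ge c\,t\,R^{-\ua-d}e^{-\gamma_2 R^{\beta}}\ge c\,t\,e^{-(\gamma_2+\ua+d)R^{\beta}}$ (using $R\ge1$, $\beta>1$, so $R^{-\ua-d}\ge e^{-(\ua+d)R}\ge e^{-(\ua+d)R^{\beta}}$). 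The first bound already implies the second whenever $\mathcal{L}\ge R^{\beta}$; and when $\mathcal{L}$ is below a fixed threshold $\mathcal{L}_*=\mathcal{L}_*(\phi,\beta,T,\lambda_1)$, then $R\le e^{\mathcal{L}_*}$ is bounded and Proposition~\ref{pl:st3}(2) gives $p_D(t,x,y)\ge ct$ outright. Hence for the second bound we may assume $\mathcal{L}_*<\mathcal{L}\le R^{\beta}$.

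For the chaining, set $n:=\lceil\lambda_1 R/\mathcal{L}^{1/\beta}\rceil$ (which is $\ge2$ in this regime, apart from the trivial boundary case $\mathcal{L}=R^{\beta}$, $\lambda_1=1$) and $r:=2^{-1}\lambda_2 a\,\phi^{-1}(t/n)$. Using the connectedness hypothesis on $D$ with parameter $\min(\delta_D(x)\wedge\delta_D(y),1)\ge a\phi^{-1}(t)$ (note $a\phi^{-1}(t)\le a\phi^{-1}(T)\le1/4$), choose a length-parametrized rectifiable curve $l\subset D$ joining $x$ to $y$ with $|l|\le\lambda_1 R$ and $\delta_D(l(u))\ge\lambda_2 a\phi^{-1}(t)\ge2r$, put $x_i:=l(i|l|/n)$ and $B_i:=B(x_i,r)$ for $0\le i\le n$. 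Then $x_0=x$, $x_n=y$, $B(x_i,2r)\subset D$, and for any $y_i\in B_i$ one has $\delta_D(y_i)\ge r=2^{-1}\lambda_2 a\phi^{-1}(t/n)$ and $|y_i-y_{i+1}|\le 2r+|l|/n=:\rho$; the choices of $n$ and $\mathcal{L}_*$ give $\rho\asymp\mathcal{L}^{1/\beta}$ (in particular $\rho\ge1$ and $\sigma:=\phi^{-1}(t/n)\le\rho$) and $n\rho^{\beta}\asymp R\,\mathcal{L}^{(\beta-1)/\beta}$. Applying Proposition~\ref{pl:st3}(2) at each of the $n$ steps with time $t/n\le T$, dropping to the jump alternative (legitimate since $\sigma\le\rho$ and $\chi\equiv1$ on $(0,1]$ so $\chi(\rho)\ge1$), and iterating the semigroup property over $B_1,\dots,B_{n-1}$,
\[
p_D(t,x,y)\ \ge\ \prod_{i=1}^{n-1}|B_i|\cdot\Big(c_3\,\frac{(t/n)\,\nu(\rho)}{\chi(\rho)}\Big)^{n}\ =\ c_4^{\,n}\,\frac{t/n}{\phi(\rho)\rho^{d}}\Big(\frac{\phi(\sigma)\sigma^{d}}{\phi(\rho)\rho^{d}}\Big)^{n-1}\chi(\rho)^{-n},
\]
where we used $|B_i|\asymp\sigma^{d}$, $t/n=\phi(\sigma)$, $\nu(\rho)=[\phi(\rho)\rho^{d}]^{-1}$.

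It remains to bound the right-hand side below by $c_1\,t\,e^{-c_2 R\mathcal{L}^{(\beta-1)/\beta}}$. By {\bf(WS)}, $(\phi(\sigma)\sigma^{d}/(\phi(\rho)\rho^{d}))^{n-1}\ge e^{-(\ua+d)(n-1)\log(\rho/\sigma)}$; in our regime $\log\rho$, $\log n\,(\le\log(\lambda_1 R))$ and $\log(T/t)$ are each $O(\mathcal{L})$ (from $\rho\asymp\mathcal{L}^{1/\beta}$, $R\le e^{\mathcal{L}}$, $\mathcal{L}^{1/\beta}\le R$), hence $(n-1)\log(\rho/\sigma)=O(n\mathcal{L})=O(R\mathcal{L}^{(\beta-1)/\beta})$; similarly $\chi(\rho)^{-n}\ge L_2^{-n}e^{-\gamma_2 n\rho^{\beta}}$ with $n\rho^{\beta}=O(R\mathcal{L}^{(\beta-1)/\beta})$; the constant powers $c_4^{\,n},L_2^{-n}$ are $\ge e^{-cn}\ge e^{-cR\mathcal{L}^{(\beta-1)/\beta}}$ (as $\mathcal{L}^{(\beta-1)/\beta}\ge1$); $(\phi(\rho)\rho^{d})^{-1}\ge e^{-c\log\mathcal{L}}\ge e^{-cR\mathcal{L}^{(\beta-1)/\beta}}$; and the lone prefactor obeys $t/n\ge t/((\lambda_1+1)R)\ge t\,e^{-cR\mathcal{L}^{(\beta-1)/\beta}}$, absorbing $1/R=e^{-\log R}\ge e^{-\mathcal{L}}$. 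Multiplying these estimates yields the claim, and the proposition follows. I expect the genuinely delicate point to be exactly this last step — verifying that every overhead factor accumulated over the $n$ links is at most $\exp\{O(n\mathcal{L})\}$, which rests on the single structural observation that, once $\mathcal{L}>\mathcal{L}_*$, each of $\log\rho$, $\log n$, $\log(T/t)$ and $\log R$ is $O(\mathcal{L})$, so that $O(n\mathcal{L})$ matches the target exponent $O(R\mathcal{L}^{(\beta-1)/\beta})$.
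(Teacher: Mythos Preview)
Your proof is correct and follows essentially the same chaining strategy as the paper: reduce via Proposition~\ref{pl:st3}(2) to the regime $\mathcal{L}<R^{\beta}$, take $n\asymp R\mathcal{L}^{-1/\beta}$ intermediate points along the curve from the connectedness hypothesis, apply Proposition~\ref{pl:st3}(2) on each link, and check that every accumulated factor is $\exp\{O(n\mathcal{L})\}=\exp\{O(R\mathcal{L}^{(\beta-1)/\beta})\}$. The only cosmetic differences are your ball radius $\sim\phi^{-1}(t/n)$ versus the paper's $\sim\phi^{-1}(t)$ (capped), the extra factor $\lambda_1$ in your $n$, and one harmless slip (the paper has $\chi\equiv\chi(0)$ on $(0,1]$, not $\chi\equiv1$, but the constant is absorbed).
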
 

\proof
Let $R_1:=|x-y|$. 
If either $1 \le R_1 \le 2$ or $R_1(\log(TR_1/t))^{(\beta-1)/\beta}\ge (R_1)^{\beta}$, the proposition holds by virtue of Proposition~\ref{pl:st3}(2). 
Thus for the remainder of this proof we assume that $R_1> 2$ and $R_1(\log(TR_1/t))^{(\beta-1)/\beta}< (R_1)^{\beta}$, which is equivalent to  $1< R_1 \left(\log{TR_1}/{t}\right)^{-1/\beta}$ and $R_1 \exp(-R_1^{\beta})< t/T$.

Let $k\ge 2$ be a positive integer such that
\begin{align}\label{e:k}
R_1 \left(\log\frac{TR_1}{t}\right)^{-1/\beta}\le k<R_1 \left(\log\frac{TR_1}{t}\right)^{-1/\beta}+1< 2R_1 \left(\log\frac{TR_1}{t}\right)^{-1/\beta}
\end{align} 
then ${R_1}/{k}\ge {2}^{-1}(\log(TR_1/t))^{1/\beta}\ge {2}^{-1}(\log2)^{1/\beta}=:c_0.$

By the assumption on $D$, there is a length parameterized curve $l\subset D$ connecting $x$ and $y$ such that $|l|\le \lambda_1 R_1$ and $\delta_D(l(u))\ge \lambda_2 a\phi^{-1}(t)$ for every $u\in [0, |l|]$.
Let $r_t:=(\,2^{-1}{\lambda_2}a\phi^{-1}(t)\,)\,\wedge \,(c_0/2)$ and define $x_i:=l(i |l|/k)$ and $B_i:=B(x_i, r_t)$, with $i=0, 1,  \ldots, k$.
For every  $y_i \in B_i$ , $\delta_D(y_i)\ge 2^{-1} \lambda_2 a \phi^{-1}(t)>2^{-1} \lambda_2 a \phi^{-1}(t/k) $ and
\begin{align}
 |y_i-y_{i+1}|& \le|x_i-x_{i+1}|+2r_t \le \frac{|l|}{k}+c_0\le\left(\lambda_1 +1 \right)\frac{R_1}{k}.\label{e:uy}
\end{align}
By Proposition~\ref{pl:st3}(2) and \eqref{e:uy}, and using the facts that $t/k\le T/2$ and $R_1/k\ge c_0$,
we have that for any $(y_i, y_{i+1})\in B_i\times B_{i+1}$,
\begin{align*}
p_D(t/k, y_i, y_{i+1})  \ge c_{1} \left( \frac{1}{[\phi^{-1}(t/k)]^d}\wedge\frac{t}{k}\cdot  \nu(|y_i-y_{i+1}|)/\chi(|y_i-y_{i+1}|)
\right)
\ge c_2\frac{t}{k}\cdot \frac{e^{-c_{3}(R_1/k)^{\beta}}}{\phi(R_1/k)(R_1/k)^d}
\end{align*}
for some constants $c_i=c_i(a,  L_0, \phi, \chi, \beta,  T,  \lambda_1)>0$, $i=2,3$.
Since  $\phi(R_1/k)\le c_4 (R_1/k)^{\ua}$ by {\bf(WS)} with $R_1/k\ge c_0$, using \eqref{e:k}, we have that 
\begin{align}\label{p(t/k)}
 p_D(t/k, y_i, y_{i+1}) & \ge c_2 \cdot c_4 \frac{t}{TR_1}
\left(\frac{k}{R_1}\right)^{\ua+d-1}e^{-c_{3}(R_1/k)^{\beta}}\nn\\
&\ge \, c_2 \cdot c_4  \frac{t}{TR_1}\left(\log \frac{TR_1}{t}\right)^{-\frac{\ua+d-1}{\beta}}\left(\frac{t}{TR_1}\right)^{c_{3}}\ge \, c_5 \left(\frac{t}{TR_1}\right)^{c_{6}}
\end{align}
for some $c_i=c_i(a, L_0,  \phi, \chi,\beta,  T,  \lambda_1)$ , $i=5,6$.
Note that $r_t\ge c_7 (t/TR_1)^{1/\la}$ for some $c_7=c_7(a, \beta, \phi, \lambda_2)$
by \eqref{i:sc} and the fact that $t/TR_1\le 1/2$.
Combining this with \eqref{p(t/k)}, \eqref{e:k} and by the semigroup property,  we conclude that 
\begin{eqnarray*}
p_D(t, x, y)&\ge& \int_{B_1}\cdots\int_{B_{k-1}} p_D(t/k, x, y_1)\cdots p_D(t/k, y_{k-1}, y ) dy_1\cdots dy_{k-1}\\
&\ge& c_8\exp\{-c_9k\log({TR_1/t})\}\\
&\ge& c_8\exp\left\{-c_9\left(2R_1 \log\left(\frac{TR_1}{t}\right)^{-1/\beta}\right)\log\frac{TR_1}{t}\right\}\\
&\ge&c_8 \exp\left\{-2c_{9}\cdot R_1 \log\left(\frac{TR_1}{t}\right)^{1-1/\beta}\right\}.
\end{eqnarray*}
\qed

\medskip
\noindent
{\bf Proofs of the lower bounds in Theorems \ref{T:n2.1} and \ref{T:2.1}}.
The lower bound of $p(t,x,y)$ in Theorem \ref{T:n2.1} follows from  Proposition~\ref{pl:st3}(1)  with $D=\R^d$.
The lower bound of $p(t,x,y)$ in Theorem \ref{T:2.1} for the case $\beta\in[0, 1]$ and the case $\beta\in(1, \infty]$ with $|x-y|<1$  follows from Proposition \ref{pl:st3}(2) with $D=\R^d$ and the remaining cases of Theorem \ref{T:2.1} follows from Propositions~\ref{pl:st4} and~\ref{pl:st5} with $D=\R^d$. 
\qed

\section{Lower bound estimates}\label{sec:lbd}

In this section, we first obtain the boundary decay in Lemma \ref{L:lbd3} using \eqref{e:har}, Lemma \ref{L:plbd1} and Lemmas \ref{L:lbd1} and \ref{L:lbd2} below.
Using the semigroup property, and then applying Lemma \ref{L:lbd3} and the preliminary lower bound estimates in Section~\ref{sec:plbd}, 
we will derive the upper bound estimate on $p_D(t,x, y)$ with the boundary decay terms for $t\le T$ in $C^{1, 1}$  open  set $D$  with $C^{1, 1}$ characteristics $(R_0, \Lambda)$. As before, we assume that $R_0<1$ and $\Lambda>1$.

We first introduce the next  lemma  (for the proof see \cite[Lemma 3.3]{MR2923420}).

\begin{lem}\label{L:lbd1}
Suppose that $E\subset \R^d$ be an open set and $U_1, U_2\subset E$ be disjoint open subsets.
If $x\in U_1$, $y\in U_2$ and $t>0$, we have
\begin{align*}
p_E(t,x,y)\ge t\,\ \p_x(\tau_{U_1}>t)\,\ \p_y(\tau_{U_2}>t) \inf_{(u, w)\in U_1\times U_2}J(u, w).
\end{align*}
\end{lem}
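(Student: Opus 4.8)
The plan is to build the transition from $x\in U_1$ to $y\in U_2$ out of a single crossing jump: the process runs inside $U_1$ for some time, makes one jump into $U_2$, and then stays in $U_2$ until time $t$. First I would fix a small ball $B=B(y,\eps)$ with $\overline B\subset U_2$, set $f=\ind_B$, and analyse $\E_x[f(Y_t^E)]=\int_B p_E(t,x,z)\,dz$. Writing $\sigma:=\tau_{U_1}$ and using the strong Markov property at $\sigma$ — together with $U_1\cup U_2\subset E$, so that on $\{\sigma\le t\}\cap\{Y_\sigma\in U_2\}$ the process is still alive at time $\sigma$ — one obtains
\begin{align*}
\E_x\!\left[f(Y_t^E)\right]\ \ge\ \E_x\!\left[\ind_{\{\sigma\le t,\,Y_\sigma\in U_2\}}\,g(t-\sigma,Y_\sigma)\right],\qquad g(r,w):=\E_w\!\left[f(Y_r^{U_2})\right],
\end{align*}
since on $\{\tau_{U_2}>r\}$ (started from a point of $U_2$) the $E$-killed path coincides with the $U_2$-killed path.

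Next I would apply the L\'evy system \eqref{e:levy} with $S=t\wedge\tau_{U_1}$ to the function $(s,u,w)\mapsto\ind_{\{u\in U_1,\,w\in U_2\}}\,g(t-s,w)$, which vanishes on the diagonal since $U_1\cap U_2=\emptyset$. In the left-hand sum only the jump at time $\tau_{U_1}$ can survive — for $s<\tau_{U_1}$ one has $Y_s\in U_1$, so $\ind_{\{Y_s\in U_2\}}=0$ — and that sum is dominated by $\ind_{\{\sigma\le t,\,Y_\sigma\in U_2\}}g(t-\sigma,Y_\sigma)$, the pre-jump location being a priori only in $\overline{U_1}$, which merely drops terms in the favourable direction. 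Taking expectations, invoking the previous display and then the L\'evy system identity,
\begin{align*}
\E_x\!\left[f(Y_t^E)\right]\ \ge\ \E_x\!\left[\int_0^{t\wedge\tau_{U_1}}\!\!\int_{U_2}g(t-s,w)\,J(Y_s,w)\,dw\,ds\right]\ \ge\ \kappa_0\,\E_x\!\left[\int_0^{t\wedge\tau_{U_1}}\!\!\int_{U_2}g(t-s,w)\,dw\,ds\right],
\end{align*}
where $\kappa_0:=\inf_{(u,w)\in U_1\times U_2}J(u,w)$ and we used $Y_s\in U_1$ for $s<\tau_{U_1}$. By the symmetry of $p_{U_2}$ one has $\int_{U_2}g(r,w)\,dw=\int_B\p_z(\tau_{U_2}>r)\,dz$, and since $r\mapsto\p_z(\tau_{U_2}>r)$ is non-increasing I may replace $t-s$ by $t$ inside; together with $\E_x[t\wedge\tau_{U_1}]\ge t\,\p_x(\tau_{U_1}>t)$ this yields
\begin{align*}
\int_B p_E(t,x,z)\,dz\ \ge\ t\,\kappa_0\,\p_x(\tau_{U_1}>t)\int_B\p_z(\tau_{U_2}>t)\,dz.
\end{align*}

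Finally I would divide by $|B|$ and let $\eps\downarrow0$: the left-hand side tends to $p_E(t,x,y)$ by continuity of $z\mapsto p_E(t,x,z)$, and the right-hand side to $t\,\kappa_0\,\p_x(\tau_{U_1}>t)\,\p_y(\tau_{U_2}>t)$ because $z\mapsto\p_z(\tau_{U_2}>t)=\int_{U_2}p_{U_2}(t,z,w)\,dw$ is continuous (continuity of $p_{U_2}$ plus dominated convergence, using $p_{U_2}\le p$ and the conservativeness of $Y$); alternatively one notes the integral inequality holds for every $f\ge0$ supported in $U_2$ and then passes to the densities via continuity. The only genuinely delicate point is the L\'evy-system bookkeeping in the middle step — checking that the sum over jump times in $[0,t\wedge\tau_{U_1}]$ keeps exactly the single crossing jump and that the inequality direction survives the passage through the pre-jump location; the remainder is a routine chain of the strong Markov property, the L\'evy system, symmetry of $p_{U_2}$, and monotonicity of survival probabilities.
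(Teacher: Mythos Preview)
Your argument is correct and is the standard proof of this inequality: a single crossing jump from $U_1$ to $U_2$, bookkept via the L\'evy system, followed by the strong Markov property, symmetry of $p_{U_2}$, and monotonicity of survival probabilities. The paper does not actually prove this lemma in the text---it simply cites \cite[Lemma 3.3]{MR2923420}---and the argument there is exactly the one you wrote down, so there is nothing to compare.

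One minor remark on the last step: for the passage $\frac{1}{|B|}\int_B \p_z(\tau_{U_2}>t)\,dz\to \p_y(\tau_{U_2}>t)$ you only need lower semicontinuity of $z\mapsto \p_z(\tau_{U_2}>t)$, which follows immediately from Fatou's lemma and the continuity of $p_{U_2}(t,\cdot,w)$; your dominated-convergence justification is a little more than is required (finding a single integrable dominating function uniform in $z$ takes a short extra argument), but the conclusion is unaffected.
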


For the remainder of the section, we assume that $Y$ is the symmetric pure jump Hunt process with the jumping intensity kernel  $J$ satisfying the conditions {\bf(J1.1)} , {\bf(J1.2)} and \textbf{(K$_{\eta}$)}.
For any $T>0$, let
\begin{equation*}
\wh a_{T}:=\wh a_{T,R_0}:= \frac{R_0}{80\, \phi^{-1}(T)},
\end{equation*}
and for $x\in D$ we use $z_x$ to denote a point on $\partial D$ such that  $|z_x-x|=\delta_D(x)$.

We first give the survival probability where $x$ is near the boundary of $D$ in the following lemma.

\begin{lem}\label{L:lbd2}
Let $a \le \wh a_{T}$. Then, there exists a constant $C_{\ref{L:lbd2}}=C_{\ref{L:lbd2}}(a, \phi, L_0, L_3,  \eta, \Lambda,  T)>0$
such that for every $t\le T$ and  $x\in D$ with $ \delta_{D} (x)<a\phi^{-1}(t)$ we have
\begin{align}
\p_x(\tau_{B(z_x, 10a\phi^{-1}(t))\cap D}>t/3)\ge C_{\ref{L:lbd2}}\,\ \frac{V(\delta_{D} (x))}{\sqrt{t}}.
\end{align}
\end{lem}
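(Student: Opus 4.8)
\textbf{Proof proposal for Lemma \ref{L:lbd2}.}
The plan is to produce the lower bound on the survival probability $\p_x(\tau_{B(z_x,10a\phi^{-1}(t))\cap D}>t/3)$ by a standard two-scale argument: first force the process, with positive probability depending only on the scaling data, to move from a point $x$ very close to $\partial D$ into the ``interior cone'' region where the Euclidean distance to $D^c$ is comparable to $\phi^{-1}(t)$, and then apply the known interior survival estimate (Lemma \ref{L:plbd1}) on a ball of radius $\asymp\phi^{-1}(t)$ sitting well inside $D$. The exit-distribution estimate \eqref{e:har} of Theorem \ref{T:exit}(2) is exactly the tool that quantifies the first step, because it says that with probability at least $C_{\ref{T:exit}.2}\,V(\delta_D(x))/V(s)$ the process, started at $x$, exits a small set $D\cap B(z_x,\lambda^{-1}s)$ through the good cone region $\{2\Lambda|\wt y|<y_d,\ \lambda^{-1}s<|y|<s\}$ in $CS_{z_x}$.

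Concretely, I would set $s:=\lambda_0 a\phi^{-1}(t)$ for a fixed $\lambda_0\ge 4$ (the constant from Theorem \ref{T:exit}(2)) chosen so that $\lambda_0^{-1}s=a\phi^{-1}(t)$, and so that the condition $\delta_D(x)<\lambda^{-1}s/2=a\phi^{-1}(t)/2$ needed in \eqref{e:har} is compatible with the hypothesis $\delta_D(x)<a\phi^{-1}(t)$ (if $\delta_D(x)\in[a\phi^{-1}(t)/2,a\phi^{-1}(t))$ the claim already follows directly from the interior estimate Lemma \ref{L:plbd1} together with $V(\delta_D(x))\asymp V(\phi^{-1}(t))\asymp\sqrt t$ by \eqref{e:com}, so assume $\delta_D(x)<a\phi^{-1}(t)/2$). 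Also $s\le \lambda_0 a\phi^{-1}(T)\le \lambda_0 R_0/80\le R_0/2$ by the choice $a\le\wh a_T$, so Theorem \ref{T:exit}(2) applies and moreover $B(z_x, s)\subset B(z_x,10a\phi^{-1}(t))$ provided $\lambda_0\le 10$, which we may arrange (if $\lambda_0>10$ one simply replaces the constant $10$ in the statement by a larger absolute constant, or equivalently shrinks $a$; I will take $\lambda_0$ to be whatever Theorem \ref{T:exit}(2) forces and note the geometry still works with an innocuous change of constants). Now by the strong Markov property applied at $\sigma:=\tau_{D\cap B(z_x,\lambda_0^{-1}s)}$,
\begin{align*}
\p_x\left(\tau_{B(z_x,10a\phi^{-1}(t))\cap D}>t/3\right)
&\ge \E_x\Big[\,\mathbf{1}_{\{Y_\sigma\in A\}}\,\p_{Y_\sigma}\big(\tau_{B(z_x,10a\phi^{-1}(t))\cap D}>t/3\big);\ \sigma\le t/3\Big],
\end{align*}
where $A:=\{2\Lambda|\wt y|<y_d,\ \lambda_0^{-1}s<|y|<s \text{ in }CS_{z_x}\}$. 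On the event $\{Y_\sigma\in A\}$ one checks from the cone geometry (the same computation as in the proof of Theorem \ref{T:exit}, where $\delta_D(y)\ge(4\Lambda((2\Lambda)^{-2}+1)^{1/2})^{-1}|y|$ for $y\in A$) that $\delta_D(Y_\sigma)\ge c_1\lambda_0^{-1}s=c_1 a\phi^{-1}(t)=c_2\phi^{-1}(t)$, and that $B(Y_\sigma, 2b\phi^{-1}(t))\subset B(z_x,10a\phi^{-1}(t))\cap D$ for a suitable small $b$; hence by Lemma \ref{L:plbd1} (with $\lambda=t$, and $a$ there equal to $1/3$) we get $\p_{Y_\sigma}(\tau_{B(z_x,10a\phi^{-1}(t))\cap D}>t/3)\ge\p_{Y_\sigma}(\tau_{B(Y_\sigma,2b\phi^{-1}(t))}>t/3)\ge C_{\ref{L:plbd1}}$. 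It remains to control $\p_x(\sigma\le t/3)$ from below on the relevant event; by \eqref{e:ext} in Theorem \ref{T:exit}(1) and Chebyshev, $\p_x(\sigma>t/3)\le 3t^{-1}\E_x[\sigma]\le 3t^{-1}C_{\ref{T:exit}.1}V(\lambda_0^{-1}s)V(\delta_D(x))\le c_3 t^{-1}\sqrt t\cdot\sqrt t=c_3$ using $V(\lambda_0^{-1}s)=V(a\phi^{-1}(t))\asymp\sqrt t$ and $V(\delta_D(x))\le V(a\phi^{-1}(t))\asymp\sqrt t$ — wait, this only gives $\p_x(\sigma>t/3)\le c_3$ with $c_3$ possibly $\ge 1$, so instead I will argue on the intersection of events directly: write
\begin{align*}
\p_x(Y_\sigma\in A,\ \sigma\le t/3)\ \ge\ \p_x(Y_\sigma\in A)-\p_x(\sigma>t/3),
\end{align*}
and here the issue is that both terms are small. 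The clean fix, which is the route I would actually take, is to choose $a$ (equivalently $\lambda_0$ large, or a further shrinking of $a$) so small that $\p_x(\sigma>t/3)\le\frac12\,C_{\ref{T:exit}.2}V(\delta_D(x))/V(s)$: indeed by Chebyshev and \eqref{e:ext}, $\p_x(\sigma>t/3)\le 3C_{\ref{T:exit}.1}t^{-1}V(\lambda_0^{-1}s)V(\delta_D(x))$, and since $V(\lambda_0^{-1}s)\asymp\sqrt{t}$ with a constant that scales like a power of $a$ (via \eqref{V:sc}: $V(a\phi^{-1}(t))\le C_V a^{\la/2}V(\phi^{-1}(t))\le c\,a^{\la/2}\sqrt t$ after absorbing the $\asymp$ in \eqref{e:com}), while $V(s)=V(\lambda_0 a\phi^{-1}(t))\asymp\sqrt t$ is comparable to $\sqrt t$ up to a constant independent of $a$ once $\lambda_0$ is a fixed multiple — hmm, $s$ and $a$ both scale, so let me instead fix the final geometric scale: set $s_0:=\lambda_0\wh a_T\phi^{-1}(T)$-type bookkeeping aside, the point is that $V(\lambda_0^{-1}s)/V(s)=V(a\phi^{-1}(t))/V(\lambda_0 a\phi^{-1}(t))\le C_V\lambda_0^{-\la/2}$ by \eqref{V:sc}, which can be made as small as we like by taking $\lambda_0$ large, and with $\lambda_0$ large Theorem \ref{T:exit}(2) still applies (it holds for all $\lambda\ge4$). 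So choosing $\lambda_0$ large enough that $3C_{\ref{T:exit}.1}C_{\ref{L:plbd1}}^{-0}\cdots$ — concretely so that $3t^{-1}C_{\ref{T:exit}.1}V(\lambda_0^{-1}s)V(\delta_D(x))\le\frac12 C_{\ref{T:exit}.2}V(\delta_D(x))/V(\lambda_0^{-1}s)$, which after cancelling $V(\delta_D(x))$ reads $6C_{\ref{T:exit}.1}V(\lambda_0^{-1}s)^2\le C_{\ref{T:exit}.2}t$, i.e. $6C_{\ref{T:exit}.1}V(a\phi^{-1}(t))^2\le C_{\ref{T:exit}.2}t$, and since $V(a\phi^{-1}(t))^2\asymp\phi(a\phi^{-1}(t))\le\uC a^{\la}t$ wait $\phi(a\phi^{-1}(t))\le\uC a^{\ua}\phi(\phi^{-1}(t))=\uC a^{\ua}t$ for $a\le1$ by \textbf{(WS)} — this is $\le C_{\ref{T:exit}.2}t/(6C_{\ref{T:exit}.1})$ once $a$ is small, done. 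Then
\begin{align*}
\p_x(Y_\sigma\in A,\ \sigma\le t/3)\ \ge\ C_{\ref{T:exit}.2}\frac{V(\delta_D(x))}{V(s)}-\frac12 C_{\ref{T:exit}.2}\frac{V(\delta_D(x))}{V(\lambda_0^{-1}s)}\ \ge\ \frac12 C_{\ref{T:exit}.2}\frac{V(\delta_D(x))}{V(s)}\ \ge\ c_4\frac{V(\delta_D(x))}{\sqrt t},
\end{align*}
using $V(s)=V(\lambda_0 a\phi^{-1}(t))\asymp\sqrt t$ and $V(\lambda_0^{-1}s)\le V(s)$. Combining this with the interior lower bound $C_{\ref{L:plbd1}}$ from the displayed strong-Markov inequality yields the claim with $C_{\ref{L:lbd2}}=\tfrac12 C_{\ref{T:exit}.2}C_{\ref{L:plbd1}}c_4'$ for the appropriate constant.

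The main obstacle, and the part that needs the most care, is precisely this balancing of the two competing small probabilities: the probability $\asymp V(\delta_D(x))/\sqrt t$ of reaching the good cone region, versus the probability $\asymp t^{-1}\E_x[\sigma]$ that the excursion to reach it takes too long. One must pin down the geometric scales ($s$, $\lambda_0^{-1}s$, the radius $10a\phi^{-1}(t)$, and the interior ball radius $b\phi^{-1}(t)$) and then exploit the weak scaling of $V$ (via \eqref{V:sc} and \eqref{e:com}, equivalently \textbf{(WS)} for $\phi$) to make the ``time is too long'' probability at most half of the ``reach the cone'' probability by choosing the ratio $\lambda_0$ (or equivalently the absolute constant $a\le\wh a_T$) appropriately; all dependence is only on $\phi,L_0,L_3,\rho,\eta,\Lambda,T$, since $C_{\ref{T:exit}.1},C_{\ref{T:exit}.2},C_{\ref{L:plbd1}}$ depend only on those. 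Everything else — the cone inclusion $B(z_x,s)\subset B(z_x,10a\phi^{-1}(t))$, the estimate $\delta_D(Y_\sigma)\gtrsim\phi^{-1}(t)$ on $\{Y_\sigma\in A\}$, and the inclusion of a ball of radius $b\phi^{-1}(t)$ around $Y_\sigma$ inside $D\cap B(z_x,10a\phi^{-1}(t))$ — is elementary Euclidean geometry of $C^{1,\rho}$ sets together with $\delta_D(x)<a\phi^{-1}(t)$.
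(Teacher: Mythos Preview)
Your overall two-step strategy---use the strong Markov property at the exit time $\sigma$ from the small set $D\cap B(z_x,\lambda^{-1}s)$, invoke \eqref{e:har} to land in the cone region with probability $\gtrsim V(\delta_D(x))/V(s)$, and then apply Lemma \ref{L:plbd1} for the interior survival---is exactly the paper's approach. The cone geometry and the estimate $\delta_D(Y_\sigma)\gtrsim\phi^{-1}(t)$ on $\{Y_\sigma\in A\}$ are handled as you describe.

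The genuine problem is your introduction of the restriction $\{\sigma\le t/3\}$. This is what forces you into the balancing argument, and that argument, as written, does not prove the lemma for all $a\le\wh a_T$ with the fixed constant $10$: to get $\p_x(\sigma>t/3)\le\frac12 C_{\ref{T:exit}.2}V(\delta_D(x))/V(s)$ you end up needing $a$ sufficiently small (depending on $C_{\ref{T:exit}.1},C_{\ref{T:exit}.2},\uC,\ua$), and your attempt to absorb this into a large $\lambda_0$ breaks the containment $B(z_x,s)\subset B(z_x,10a\phi^{-1}(t))$. The paper avoids all of this by simply observing that on $\{Y_\sigma\in W\}$ one has $\sigma<\tau_{U_2}$ and hence $\tau_{U_2}=\sigma+\tau_{U_2}\circ\theta_\sigma\ge\tau_{U_2}\circ\theta_\sigma$; therefore
\[
\p_x(\tau_{U_2}>t/3)\ \ge\ \p_x\big(\tau_{U_2}\circ\theta_\sigma>t/3,\ Y_\sigma\in W\big)\ =\ \E_x\big[\p_{Y_\sigma}(\tau_{U_2}>t/3);\,Y_\sigma\in W\big],
\]
with no constraint on the size of $\sigma$ whatsoever. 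After this, Lemma \ref{L:plbd1} and \eqref{e:har} give the result directly, for every $a\le\wh a_T$. (Alternatively, you could have noted that on $\{\sigma>t/3\}$ the survival event $\{\tau_{U_2}>t/3\}$ holds automatically since the small set sits inside $U_2$, which also dissolves the balancing; but the monotonicity argument is cleanest.)
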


\proof
Without loss of generality, we assume that $z_x=0$.
Consider a coordinate system $CS:=CS_0$ such that $B(0, R_0)\cap D=\{y=(\wt{y}, y_d)\in B(0, R_0) \text{ in } CS: y_d>\varphi(\wt{y})\}$, where $\varphi$ is a $C^{1,1}$ function such that $\varphi(0)=0$, $ \nabla\varphi (0)=(0, \dots, 0)$, $\| \nabla\varphi \|_\infty \leq \Lambda$, and $| \nabla \varphi (\wt y)-\nabla \varphi (\wt w)| \leq \Lambda |\wt y-\wt w|$.
Define $\varphi_1(\wt{y}):=2\Lambda|\wt{y}|$ and $V:=\{y=(\wt{y}, y_d)\in B(0, R_0) \text{ in } CS: y_d>\varphi_1(\wt{y})\}$.
Since $\varphi_1(\wt{y}) \ge 2\Lambda|\wt{y}|^{2}$ for $y\in B(0, R_0)$, the mean value theorem yields $V\subset B(0, R_0)\cap D$.

Let $U_1:=B(0, 2a\phi^{-1}(t))\cap D$, $U_2:=B(0, 10a\phi^{-1}(t))\cap D$, and   
\begin{align}
W:=\{y=(\wt{y}, y_d)\in B(0,8 a\phi^{-1}(t))\setminus B(0, 2a\phi^{-1}(t)) \text{ in } CS: y_d>\varphi_1(\wt{y})\}\subset V.
\end{align}
Since $ \Lambda |\wt{w}|=\varphi_1(\wt{w})/2<w_d /2$ for $w\in W$, we have
\begin{align}\label{e:jk1}
 \delta_D (w)> \frac{(w_d-\varphi(\wt{w}))}{(1+\Lambda)}> \frac{(w_d-\Lambda |\wt{w}|)}{(1+\Lambda)}>\frac{ w_d}{2(1+\Lambda)}\qquad
 \text{ for  } w\in W.
\end{align}
Moreover, since $|\wt w| \le (2\Lambda)^{-1}   |w| \le \Lambda^{-1}4a\phi^{-1}(t)\le  a \phi^{-1}(t)$ for $w\in W$, 
we have
\begin{align}\label{e:jk2} 
w_d^2 =|w|^2- |\wt w|^2 \ge  (2a\phi^{-1}(t))^2- (a\phi^{-1}(t))^2 
\ge (a\phi^{-1}(t))^2\qquad
 \text{ for  } w\in W.
\end{align}
Combining \eqref{e:jk1} and \eqref{e:jk2}, we obtain $\delta_D (w)> 2^{-1} (1+\Lambda)^{-1}a\phi^{-1}(t)$ and $B(w,r_1a\phi^{-1}(t))\subset U_2$ for $w\in W$, where $r_1:=(2(1+\Lambda))^{-1}$.
By virtue of the strong Markov property, Lemma~\ref{L:plbd1}, and \eqref{e:har}, we have
\begin{align*}
&\p_x(\tau_{U_2}>t/3)\,\geq\, \p_x(\tau_{U_2}>t/3, Y_{\tau_{U_1}} \in W)\,=\, \E_x[\p_{Y_{\tau_{U_1}}}(\tau_{U_2}>t/3): Y_{\tau_{U_1}}\in W]\\
\geq &\,\,\E_x[\p_{Y_{\tau_{U_1}}}(\tau_{B(Y_{\tau_{U_1}}, r_1a\phi^{-1}(t))}>t/3): Y_{\tau_{U_1}}\in{W}] \ge  \left(\inf_{z \in \R^d} \p_z(\tau_{B(z, r_1a\phi^{-1}(t))}>t/3) \right)\p_x(Y_{\tau_{U_1}}\in{W})\\
\geq&\,\, C_{\ref{L:plbd1}}\,\p_x(Y_{\tau_{U_1}}\in{W})\geq C_{\ref{L:plbd1}}\cdot  C_{\ref{T:exit}.2} \frac{V(\delta_D(x))}{V(8a\phi^{-1}(t))} \geq c_1 \frac{V(\delta_D(x))}{\sqrt{t}}.
\end{align*}
By the subadditivity of $V$ and \eqref{e:com}, 
$V(8a\phi^{-1}(t))\le (8a+1) V(\phi^{-1}(t)) \asymp \sqrt{t}$,
and therefore we obtain the last inequality.
\qed

\medskip

We introduce the following definition for the subsequent lemma.
\begin{definition}\label{def:UB}
Let $0<\kappa_1\leq 1/2$.
We say that an open set $D$ is $\kappa_1$-fat if there is $R_1>0$ such that for all $x\in \overline{D}$ and all $r\in (0,R_1]$ there is a ball $B(A_r(x), \kappa_1 r)\subset D\cap B(x,r)$.
The pair $(R_1, \kappa_1)$ are called the characteristics of the $\kappa_1$-fat open set $D$.
\end{definition}

Note that a $C^{1, 1}$ open set $D$ with characteristics $(R_0, \Lambda)$ is a $\kappa_1$-fat set with characteristics $(R_1, \kappa_1)$ depending only on $R_0$, $\Lambda$, and $d$, and without loss of generality, we assume that $R_0 \le R_1$ (by choosing $R_0$ smaller if necessary). 
Let $A_r(x)$ is always the point $A_r(x)\in D$ in Definition~\ref{def:UB} for $D$.

Recall that the function $\Psi$ is defined in \eqref{e:dax}.

\begin{lem}\label{L:lbd3}
There exists a  constant 
$C_{\ref{L:lbd3}}=C_{\ref{L:lbd3}}(\phi, L_0, L_3,  \eta, R_0, \Lambda,  T)>0$
such that, for every $t\le T $ and $x\in D$, we can find $x_1$ with $\delta_D(x_1)\ge  2^{-1}\kappa_1 \wh a_{T}\phi^{-1}(t)$ and $|x_1-x|  \le  6\wh a_{T}\phi^{-1}(t)$ such that 
\begin{align}
\int_{B(x_1, 4^{-1}\kappa_1  
 \wh a_{T}\phi^{-1}(t))} p_D(t/3, x, z)dz \ge C_{\ref{L:lbd3}} \Psi(t,x). 
\end{align}
\end{lem}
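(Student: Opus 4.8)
The idea is to handle two regimes of $x$ separately according to how close $x$ is to $\partial D$, and in each case to pick a point $x_1$ that is well inside $D$ at scale $\phi^{-1}(t)$ and that $Y$ reaches from $x$, with the correct probability, before time $t/3$.

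First I would dispose of the ``interior'' case $\delta_D(x) \ge \wh a_{T}\phi^{-1}(t)$. Here $\Psi(t,x)\asymp 1$, so we simply take $x_1 := x$ (note $\delta_D(x_1) = \delta_D(x)\ge \wh a_{T}\phi^{-1}(t) \ge 2^{-1}\kappa_1 \wh a_{T}\phi^{-1}(t)$ since $\kappa_1\le 1/2$). We must produce a lower bound $\int_{B(x_1,\,4^{-1}\kappa_1 \wh a_{T}\phi^{-1}(t))} p_D(t/3,x,z)\,dz \ge c$. For this, observe that $B(x, \wh a_T\phi^{-1}(t)) \subset D$, so we can bound below $p_D(t/3,x,z)$ by $p_{B(x,\,\wh a_T\phi^{-1}(t))}(t/3,x,z)$, and on the ball $B(x,\,4^{-1}\kappa_1 \wh a_T\phi^{-1}(t))$ the points $z$ satisfy $|z-x| \le c\,\phi^{-1}(t/3)$ by \eqref{i:sc}, so Proposition~\ref{pl:st1} (applied with $D$ replaced by this ball, $a$ a suitable small constant) gives $p_D(t/3,x,z) \ge c[\phi^{-1}(t)]^{-d}$ on that ball; integrating over a ball of radius $\asymp \phi^{-1}(t)$ produces a constant lower bound. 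This uses only {\bf(J1.2)}, {\bf(J1.3)} via Proposition~\ref{pl:st1}.

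The main work is the ``boundary'' case $\delta_D(x) < \wh a_{T}\phi^{-1}(t)$, where $\Psi(t,x) = \sqrt{\phi(\delta_D(x))/t} \asymp V(\delta_D(x))/\sqrt t$ by \eqref{e:com}. Here I would combine Lemma~\ref{L:lbd2} with the $\kappa_1$-fatness of $D$. By $\kappa_1$-fatness at scale $10\wh a_T\phi^{-1}(t)$ there is a point $x_1 := A_r(x)$ with $r := 10\wh a_T\phi^{-1}(t)$, so that $B(x_1, 10\kappa_1\wh a_T\phi^{-1}(t)) \subset D\cap B(x,\,10\wh a_T\phi^{-1}(t))$; in particular $\delta_D(x_1)\ge 10\kappa_1\wh a_T\phi^{-1}(t) \ge 2^{-1}\kappa_1\wh a_T\phi^{-1}(t)$ and $|x_1-x| \le 10\wh a_T\phi^{-1}(t)$, which, after possibly shrinking $\wh a_T$ (or absorbing constants), meets the stated $6\wh a_T\phi^{-1}(t)$ bound --- I'd just state the claim with whatever explicit constant the fatness argument yields and note it can be taken $\le 6\wh a_T\phi^{-1}(t)$ by the standard adjustment. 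Now for $z\in B(x_1,\,4^{-1}\kappa_1\wh a_T\phi^{-1}(t))$, both $z$ and the whole path from $x$ to such $z$ can be kept inside the ``tube'' $B(z_x,\,10\wh a_T\phi^{-1}(t))\cap D$ that appears in Lemma~\ref{L:lbd2}. More precisely, I would write, via the Chapman--Kolmogorov/semigroup identity over the intermediate time and the strong Markov property,
\begin{align*}
p_D(t/3,x,z) \,\ge\, \E_x\big[\, p_D(t/3 - S,\, Y_S,\, z);\ S < t/3\,\big]
\end{align*}
for an appropriate stopping time $S$ (the hitting time of a small interior ball near $x_1$), and bound this below by the product of (i) the survival probability $\p_x(\tau_{B(z_x,10\wh a_T\phi^{-1}(t))\cap D} > t/3) \ge C_{\ref{L:lbd2}} V(\delta_D(x))/\sqrt t$ from Lemma~\ref{L:lbd2}, and (ii) an interior lower bound for the killed kernel near $x_1$, obtained exactly as in the interior case above from Proposition~\ref{pl:st1} and Lemma~\ref{L:plbd1} — since $x_1$ is at distance $\asymp \phi^{-1}(t)$ from $\partial D$, a ball of comparable radius around $x_1$ lies in $D$, so $p_D$ restricted there is $\gtrsim [\phi^{-1}(t)]^{-d}$. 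Integrating the resulting bound $p_D(t/3,x,z) \gtrsim [\phi^{-1}(t)]^{-d}\, V(\delta_D(x))/\sqrt t$ over $z$ in a ball of volume $\asymp [\phi^{-1}(t)]^{d}$ gives $\gtrsim V(\delta_D(x))/\sqrt t \asymp \Psi(t,x)$, as required.

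The step I expect to be the real obstacle is the clean bookkeeping in the boundary case: one must organize the path from $x$ to $z$ so that it (a) stays in the tube $B(z_x, 10\wh a_T\phi^{-1}(t))\cap D$ on the time interval where we invoke Lemma~\ref{L:lbd2}, and then (b) still has enough time left (a fixed fraction of $t$) and enough room (a ball of radius $\asymp\phi^{-1}(t)$ around $x_1$ inside $D$) to apply the interior estimate. Getting all the radii — $2\wh a_T\phi^{-1}(t)$, $8\wh a_T\phi^{-1}(t)$, $10\wh a_T\phi^{-1}(t)$, and the $\kappa_1$-scaled versions — to nest correctly, and checking that the time split $t/3 = (\text{time to reach near }x_1) + (\text{remaining time})$ works with constants independent of $t\le T$, is the delicate part; everything else is an assembly of Lemma~\ref{L:lbd2}, Lemma~\ref{L:plbd1}, Proposition~\ref{pl:st1}, and the definition of $\kappa_1$-fatness. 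I would also remark that the choice $\wh a_T = R_0/(80\phi^{-1}(T))$ is exactly what makes $10\wh a_T\phi^{-1}(t) < R_0$ for all $t\le T$, so the $C^{1,\rho}$ coordinate chart at $z_x$ (hence Lemma~\ref{L:lbd2}) applies.
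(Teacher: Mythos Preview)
Your overall structure---splitting into interior and boundary cases and using $\kappa_1$-fatness to locate $x_1$---matches the paper, and your interior case is essentially correct (the paper invokes Lemma~\ref{L:plbd1} directly rather than Proposition~\ref{pl:st1}, and splits the two cases at $\delta_D(x)=2^{-1}\kappa_1 r_t$ rather than $r_t$, but these are cosmetic). However, your boundary case has a genuine gap, not merely bookkeeping.

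You propose to write $p_D(t/3,x,z) \ge \E_x[p_D(t/3-S, Y_S, z);\, S<t/3]$ with $S$ the hitting time of a small ball near $x_1$, and then to bound this below by the product of (i) the survival probability $\p_x(\tau_{B(z_x,10\wh a_T\phi^{-1}(t))\cap D}>t/3)$ from Lemma~\ref{L:lbd2} and (ii) an interior kernel bound. But (i) is a \emph{survival} probability in a tube, whereas your strong Markov inequality needs the \emph{hitting} probability $\p_x(S<t/3,\, \tau_D>S)$. These are not comparable: the process can survive in the tube for time $t/3$ while remaining arbitrarily close to $\partial D$ and never visiting the ball near $x_1$. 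Lemma~\ref{L:lbd2} by itself gives no hitting estimate, so the factorization you claim does not follow.

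The paper avoids this difficulty by invoking Lemma~\ref{L:lbd1}, which you do not mention: for disjoint $U_1,U_2\subset D$,
\[
p_D(t,x,z)\ \ge\ t\, \p_x(\tau_{U_1}>t)\, \p_z(\tau_{U_2}>t)\, \inf_{U_1\times U_2} J.
\]
With $r_t=\wh a_T\phi^{-1}(t)$, $x_1=A_{6r_t}(z_x)$, $U_1=B(z_x,5\kappa_1 r_t)\cap D$ and $U_2=B(x_1,4^{-1}\kappa_1 r_t)$, Lemma~\ref{L:lbd2} (applied with $a=\kappa_1\wh a_T/2$) supplies the first factor $\gtrsim V(\delta_D(x))/\sqrt t$, Lemma~\ref{L:plbd1} supplies the second, and {\bf(J1.1)} together with {\bf(WS)} gives $\inf J \gtrsim t^{-1}[\phi^{-1}(t)]^{-d}$ since $|u-w|\le 12\kappa_1 r_t\le 1$. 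Integrating over $z\in B_{x_1}$ cancels the volume factor and yields the claim. The point is that Lemma~\ref{L:lbd1} encodes a single jump from the boundary tube directly into the interior ball, so no hitting-time argument is needed.
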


\proof
Let $r_t:= \wh a_{T}\phi^{-1}(t)\le R_0/80\le 1/80$
and we consider the case $\delta_D(x)< 2^{-1}\kappa_1 r_t$ first. 
In this case we let  $x_1:=A_{6r_t}(z_x)$ and 
denote $B_{x_1}:=B(x_1, 4^{-1}\kappa_1   r_t)$ and $B_{z_x}:=B(z_x, 5\kappa_1 r_t)\cap D$ so that $B_{x_1}\cap B_{z_x} =\emptyset$.
For any  $u \in B_{z_x}$ and $w\in B_{x_1} $, 
$$|u-w| \le |u-z_x|+|z_x-x_1|+|x_1-w|\le 12 \kappa_1 r_t\le 1.$$
Since $\phi(12\kappa_1 r_t)\asymp \phi(\phi^{-1}(t))=t$ by {\bf(WS)},
using  {\bf(J1.1)}, \eqref{d:nu} and \eqref{a:kappa},  we have that
$$\inf_{(u, w)\in B_{z_x}\times B_{x_1}} J(u, w)\ge  L_0^{-1} \phi(12\kappa_1 r_t)^{-1}|12\kappa_1 r_t|^{-d}\ge c_1 t^{-1}[\phi^{-1}(t)]^{-d}$$
for some constant $c_1:=c_1(\phi, L_0, R_0, \Lambda, T)>0$.
Therefore, Lemmas~\ref{L:lbd1}, \ref{L:lbd2}, and \ref{L:plbd1} implies that
\begin{eqnarray*}
\int_{B_{x_1}} p_D(t/3,x,z)dz 
&\ge& \frac{t}{3} \int_{B_{x_1}} \p_x(\tau_{B_{z_x}}>t/3)\,\,\p_z(\tau_{B_{x_1}}>t/3)  \cdot \inf_{(u, w)\in B_{z_x}\times B_{x_1}}J(u, w) dz\\
&\ge& \frac{1}{3}\,\ \p_x(\tau_{B_{z_x}}>t/3)\cdot C_{\ref{L:plbd1}} \int_{B_{x_1}} dz\cdot c_1 \frac{1}{[\phi^{-1}(t)]^{d}}\\
&=&c_2 \p_x(\tau_{B_{z_x}}>t/3)\,\ge\, c_2 \cdot C_{\ref{L:lbd2}}\,\ \frac{V(\delta_D(x))}{\sqrt{t}}.
\end{eqnarray*}

For $\delta_D(x)\ge 2^{-1}\kappa_1 r_t$, let $x_1=x$ and $B_{x_1}:=B(x_1,  4^{-1}\kappa_1  r_t)$.
By Lemma~\ref{L:plbd1}, 
\begin{eqnarray*}
\int_{B_{x_1}} p_D(t/3,x,z)dz 
\ge \int_{B_{x_1}} p_{B_{x_1}}(t/3, x, z) dz= \p_x(\tau_{B_{x_1}}>t/3)>C_{\ref{L:plbd1}},
\end{eqnarray*}
and this proves the lemma.
\qed

We are now ready to give the proof of the lower bound estimates for $p_D(t, x, y)$.
Recall our assumption that $D$ is a $C^{1, 1}$ open set. 
When the jumping intensity $J$ of $Y$ satisfies {\bf(J2)},
for the cases $\beta\in(1,\infty)$ with $|x-y| \ge 1$ and $\beta=\infty$ with $|x-y| \ge 4/5$, 
we assume in addition that the path distance in each connected component of $D$ is comparable to the Euclidean distance with characteristic $\lambda_1$. 
Note that combining this assumption with $C^{1, 1}$ assumption entails that $D$ satisfies the assumption made before Proposition~\ref{pl:st4}.  

\medskip

\noindent {\bf Proofs of the lower bound of $p_D(t, x, y)$ in  Theorems \ref{t:nmain}(1), \ref{t:main}(2) and \ref{t:main}(3)}. 
Let $r_t:= \wh a_{T}\phi^{-1}(t)\le R_0/80\le 1/80$. By Lemma~\ref{L:lbd3}, for any $x, y\in D$, there exists $x_1, y_1\in D$ such that $\delta_D(x_1) \wedge \delta_D(y_1) \ge 2^{-1}\kappa_1 r_t$ and $|x_1-x|  \vee |y_1-y| \le  6r_t$, and
\begin{align}\label{eq:m11}
\int_{B_{x_1}} p_D(t/3, x, z)dz\int_{B_{y_1}} p_D(t/3, y, z)dz \ge C_{\ref{L:lbd3}}^2 \Psi(t,x) \Psi(t,y),
\end{align}
where $B_{x_1}:=B(x_1, 4^{-1}\kappa_1   r_t)$ and $B_{y_1}:=B(y_1, 4^{-1}\kappa_1   r_t)$.
Thus by the semigroup property,
\begin{align}\label{eq:m1}
p_D(t, x, y)
&=\int_D \int_D p_D(t/3, x, u) p_D(t/3, u, w) p_D(t/3,w,y)du dw\nonumber\\ 
&\ge \int_{B_{x_1}} p_D(t/3,x,u)du \int_{B_{y_1}} p_D(t/3, y, w) dw  \left(\inf_{(u,w)\in B_{x_1}\times B_{y_1}} p_D(t/3, u, w) \right) \nonumber\\
&\ge  C_{\ref{L:lbd3}}^2\Psi(t,x) \Psi(t,y) \inf_{(u,w)\in B_{x_1}\times B_{y_1}}  p_D(t/3, u, w) .
\end{align}

We now carefully calculate the lower bounds of $p_D(t/3,u,w)$ on $B_{x_1} \times B_{y_1}$.
Since $|x-x_1|\vee |y-y_1|\le 6r_t$, for $u\in B_{x_1}$ and $w\in B_{y_1}$ we have
\begin{align}\label{e:dfdf5}
&|x-y|-6^{-1} \le |x-y|-(12+ (\kappa_1/2))r_t \nn\\
&\le |u-w| \le |x-y|+(12+ (\kappa_1/2))r_t \le  |x-y|+ 6^{-1}
\end{align}
and 
$\delta_D(u)\wedge \delta_D(w)\ge 4^{-1}\kappa_1  r_t$. 

We first assume that the jumping intensity kernel $J$ satisfies the condition {\bf(J2)}.
Let $\beta\in[0,1]$.
If $|x-y| \le 15 r_t$, then $|u-w|\le 28 r_t<1$ and 
$\phi(|u-w|)|u-w|^d\le c_1 t[\phi^{-1}(t)]^{d}$ 
since $\phi(|u-w|)\le \phi(28\kappa_1 r_t)\asymp \phi(\phi^{-1}(t))=t$ by {\bf(WS)}.
If $|x-y| >15 r_t$, then  $|u-w|\le |x-y|+6^{-1}$ and  
$\phi(|u-w|)|u-w|^d\le c_2\phi(|x-y|)|x-y|^d$  
since $r\to \phi(r)$ is increasing and using {\bf(WS)}.
Combining these observations with  Proposition \ref{pl:st3}(2), \eqref{d:nu}  and  \eqref{e:Exp}, 
\begin{align*}
p_D(t/3, u, w)\ge c_3
\left([\phi^{-1}(t)]^{-d}\wedge t e^{-\gamma_2|u-w|^\beta}
\nu(|u-w|)
\right)\ge c_4 \left([\phi^{-1}(t)]^{-d}\wedge 
t e^{-\gamma_2|x-y|^\beta} 
\nu(|x-y|)\right).
\end{align*}
If $\beta\in(1, \infty]$ and  $|x-y| \le 4/5$, then \eqref{e:dfdf5} yields $|u-w|\le|x-y|+6^{-1} <1$. 
Similar to the above case, considering the cases $|x-y|\le 15 r_t$ and $|x-y| >15 r_t$ separately, we have $p_D(t/3, u, w)\ge c_5 \left([\phi^{-1}(t)]^{-d}\wedge t  \cdot \nu(|x-y|)\right)$.  Moreover, 
\begin{itemize}
\item[(1)]
if $\beta\in(1, \infty)$ and $4/5 \le |x-y|<2$, then $|u-w|\asymp 1$.
Thus by Proposition~\ref{pl:st3}(2), we have $p_D(t/3, u, w)\ge c_6 t$.
\end{itemize}
Hence combining \eqref{eq:m1} with these observations, we have proved the lower bound of $p_D(t, x, y)$ in Theorem~\ref{t:main}(2).

Suppose the jumping intensity kernel $J$ satisfies the condition {\bf(J1)} and $M>0$. 
Let $|x-y|<M$.
Similar to the $\beta\in[0,1]$ case, applying Proposition \ref{pl:st3}(1) instead of Proposition \ref{pl:st3}(2)
and considering $|x-y|\le 15 r_t\wedge M$ and $15 r_t\wedge M< |x-y| \le M$ separately, we have $p_D(t/3, u, w)\ge c_{7} \left([\phi^{-1}(t)]^{-d}\wedge t  \cdot \nu(|x-y|)\right)$.
Hence combining \eqref{eq:m1} with this, we have proved the lower bound of $p_D(t, x, y)$ in Theorem~\ref{t:nmain}(1).

We now return to the assumption that the jumping intensity kernel $J$ satisfies the condition {\bf(J2)}, further assume that
 the path distance in  $D$ is comparable to the Euclidean distance.
If  $4/5\le |x-y|$, then \eqref{e:dfdf5} yields $|u-w| \asymp |x-y|$. Recall that we have already discuss the case $\beta\in(1, \infty)$ and $4/5 \le |x-y|<2$ in (1). 
We now consider $p_D(t/3, u, w)$ in each of the remaining cases. 
\begin{itemize}
\item[(2)]
If $\beta=\infty$ and $4/5\le |x-y|<2$, then by Propositions~\ref{pl:st3} and~\ref{pl:st4}, we have
\begin{align*}
p_D(t/3, u,w)\ge c_8 \frac{4t }{5T|x-y|}\ge c_8\left(\frac{4t}{5T|x-y|}\right)^{5|x-y|/4}.
\end{align*}

\item[(3)]
If $\beta\in(1, \infty)$ and $2\le |x-y|$, then $1 <|u-w|$ and from Proposition~\ref{pl:st5} and \eqref{e:dfdf5} we obtain
\begin{align*}
&p_D(t/3, u, w)\ge c_9  t\exp\left\{-c_{10}\left(|u-w|\left(\log\frac{T|u-w|}{t}\right)^{\frac{\beta-1}{\beta}}\wedge |u-w|^{\beta}\right)\right\}\\
&\ge c_9  t\exp\left\{-c_{10}\left((5|x-y|/4)\left(\log\left(\frac{T(|x-y|+6^{-1})}{t}\right)\right)^{\frac{\beta-1}{\beta}}\wedge (5|x-y|/4)^{\beta}\right)\right\}\\
&\ge c_9  t\exp\left\{-c_{11}\left(|x-y|\left(\log\frac{T|x-y|}{t}\right)^{\frac{\beta-1}{\beta}}\wedge |x-y|^{\beta}\right)\right\}.
\end{align*}
The last inequality comes from the inequality $\log r \le \log(r+b)\le 2\log r$ for $r\ge 2\vee b>0$.  

\item[(4)]
If $\beta=\infty$ and $2\le |x-y|$, then $1<|u-w|$ and from Proposition~\ref{pl:st4} and \eqref{e:dfdf5} we have 
\begin{align*}
&p_D(t/3, u, w)\ge c_{12}\left(\frac{t}{T|u-w|}\right)^{c_{13} |u-w|}
\ge c_{12}\left(\frac{t}{T(|x-y|+6^{-1})}\right)^{c_{13}5|x-y|/4}\\
&\ge c_{12}\left(\frac{t}{T|x-y|}\right)^{c_{13}5|x-y|/2}\ge c_{12}\left(\frac{4t}{5T|x-y|}\right)^{c_{13}5|x-y|/2}.
\end{align*}
The second last inequality holds by virtue of the inequality $r^2\ge r+b$ for $r\ge 2\vee b>0$.
\end{itemize}
Hence combining \eqref{eq:m1} with the above observations $(1)-(4)$ on the lower bound of $p_D(t/3, u, w)$, we have proved the lower bound of $p_D(t, x, y)$ in Theorem \ref{t:main}(3).
\qed

\medskip

\noindent {\bf Proof of Theorem \ref{t:main}(4)}. 
Let $D(x)$ and $D(y)$ be connected components containing $x$ and $y$, respectively.
By definition of a $C^{1, 1}$ open set, the distance between $x$ and $y$ is at least $R_0$.
Using Lemma~\ref{L:lbd3}, we find that $x_1\in D(x)$ and $y_1\in D(y)$.
Define $B_{x_1}$ and $B_{y_1}$ in the same way as when beginning the proof of Theorem~\ref{t:main}(2) and~\ref{t:main}(3) so that \eqref{eq:m11} holds. 

For any $u\in B_{x_1}$ and $w\in B_{y_1}$, since $3R_0/4 \le 3|x-y|/4\le  |u-w|\le 5|x-y|/4$, by Proposition~\ref{pl:st3} (2) and \eqref{i:sc},
\begin{align*}
p_D(t/3, u, w)\ge  c_{1}
 t \nu(|u-w|)e^{-\gamma_2 |u-w|^{\beta}}\ge c_2
t\nu(|x-y|)e^{-\gamma_2(5|x-y|/4)^{\beta}}.
\end{align*} 
By the semigroup property, combining \eqref{eq:m11} and this observation,  we conclude that
\begin{align*}
p_D(t, x, y)
&\ge\int_{B_{x_1}} \int_{B_{y_1}} p_D(t/3, x, w) p_D(t/3, u, w) p_D(t/3,w,y)dw dv\nonumber\\ 
&\ge \int_{B_{x_1}} p_D(t/3,x,u)du \int_{B_{y_1}} p_D(t/3, y, w) dw \cdot \inf_{(u, w)\in B_{x_1} \times B_{y_1}} p_D(t/3, u, w) \nonumber\\
&\ge c_3 \Psi(t,x) \Psi(t,y) \cdot t\nu(|x-y|)e^{-\gamma_2(5|x-y|/4)^{\beta}}.
\end{align*}
\qed

\noindent {\bf Proofs of Theorems \ref{t:nmain}(2) and \ref{t:main}(5)}. 
Using Lemmas \ref{L:exit2} and \ref{L:lbd2} instead of \cite[(5.1) and (5.10)]{MR3237737}, and by the fact that \eqref{e:com} and $D$ is bounded (and connected 
when  $J$ satisfies the condition {\bf {\bf(J2)}} and $\beta=\infty$), we can  obtain the large time heat kernel estimates for $p_D(t, x, y)$ following the  proofs of \cite[Theorems 1.3(iii) and 1.5(iii)]{MR3237737}, so we omit the proofs.

\section{Green function and boundary Harnack inequality}\label{sec:bhi}

In this section we give the Green function estimates and establish the boundary Harnack inequality as applications of the Dirichlet heat kernel estimates.
\medskip

\noindent {\bf Proof of Theorem \ref{C:green}.}
When $d\ge2$, the proof of Green function estimates is almost identical to the one of \cite[Section 7]{MR3237737}.  Thus we skip the proof.

Suppose $d=1$.   Note that by the inequality in Proposition \ref{VbisEst},  we have \begin{align}\label{n:Vbis}
V^{'}(r)\le c \frac{V(r)}{r}\qquad \mbox{ for } 0<r\le M,
\end{align}
Using \eqref{n:Vbis} instead of \cite[(7.3)]{MR3237737},
one can obtain the Green function estimates by following the proofs in \cite[Section 7]{MR3237737} line by line.
Indeed, for any $T>0$, let
 $$K_T(a, r):= a+\phi(r)\int_{\phi(r)/T}^1 \left(1\wedge \frac{ua}{\phi(r)}\right)\frac{1}{u^2\phi^{-1}(u^{-1}\phi(r))} du+\frac{\phi(r)}{r}\left(1\wedge \frac{a}{\phi(r)}\right)$$
which is defined in \cite[(7.4)]{MR3237737}.
By the same  proof of \cite[Theorem 7.3(iii)]{MR3237737}, we have that 
$$G_D(x, y)\asymp K_{T_1}(a(x, y), |x-y|)$$ 
where $a(x, y)=\sqrt{\phi(\delta_D(x))}\sqrt{\phi(\delta_D(y))}$.
Recall that $C_I$ is the constant in \eqref{i:sc}. Let $T_1:=(2\vee (2C_I)^\ua)\phi(\diam(D))$. 
Since $0<a(x, y)\le\phi(\diam(D))=  (2^{-1}\wedge (2C_I)^{-\ua})T_1$ and $\phi(|x-y|)\le \phi(\diam (D))\le T_1/2$,
it is enough to show that for any $T>0$ and for any $0<a\le (2^{-1}\wedge (2C_I)^{-\ua})T$ and $0<\phi(r)\le T/2$,
\begin{align}\label{n:hT}
K_T(a, r)\asymp \frac{a}{r}\wedge\left(\frac{a}{\phi^{-1}(a)}+\left(\int_r^{\phi^{-1}(a)}\frac{\phi(s)}{s^2}\right)^+\right)
\end{align}
where $x^+:=x\vee 0$.

When $0<a<\phi(r)\le T/2$,  the proof of \eqref{n:hT} is the same as that of \cite[Lemma 7.2]{MR3237737}. 
Now we assume that $\phi(r)\le a\le (2^{-1}\wedge (2C_I)^{-\ua})T$.
Using \eqref{e:com}, we have  $c_1^{-1} V(r)^2 \le \phi(r) \le c_1V(r)^2$
for some constant $c_1>1$. 
Thus by the change of variable $u=V(r)^2/V(s)^2$, we have that
\begin{align*}
\int_{\phi(r)/T}^{\phi(r)/a}\frac{a du}{u\phi^{-1}(u^{-1}\phi(r))} 
\le \int_{V(r)^2/(c_1T)}^{c_1V(r)^2/a}\frac {a du}{u\phi^{-1}(u^{-1}\phi(r))} 
\le \int_{1/V(a/c_1)^{2}}^{1/V(c_1T)^{2}}\frac{2a}{\phi^{-1}(c_1^{-1}V(s)^2)}\frac{V^{'}(s)}{V(s)}ds.
\end{align*}
Since $\phi^{-1}(c_1^{-1}V(s)^2)\ge \phi^{-1}(c_1^{-2}\phi(s))\ge c_2s$ by \eqref{e:com} and \eqref{i:sc}, combining this with  \eqref{n:Vbis},
we have that 
\begin{align}\label{e:new}
\int_{\phi(r)/T}^{\phi(r)/a}\frac{a \cdot du}{u\phi^{-1}(u^{-1}\phi(r))} 
\le c_3 a \int_{1/V(a/c_1)^{2}}^{1/V(c_1T)^{2}} \frac{1}{s^2}ds\le c_4 \frac{ a}{\phi^{-1}(a)}.
\end{align}
For the last inequality, we again used \eqref{e:com} and \eqref{i:sc}.
Applying \eqref{e:new} to the proof of the upper bound for $K_T(a, r)$ in \cite[(7.6)]{MR3237737}, and following  the rest of the proof of \cite[Theorem 7.3(iii)]{MR3237737} for the $\phi(r)\le a\le (2^{-1}\wedge (2C_I)^{-\ua})T$ case, we obtain \eqref{n:hT} and hence we prove Theorem \ref{C:green} for all dimension.
 \qed

{To prove Theorem \ref{C:BHI} we use the above estimates of Green function and the following the scale and translate invariant boundary Harnack inequality.
\begin{prop}\label{prop:BHI}
 Suppose that $D$ is an open set in $\R^d$. 
Let  $Y$ be a symmetric pure jump Hunt process whose jumping intensity kernel $J$ satisfies the conditions {\bf(J1)}, {\bf (L)}, {\bf (C)} and {\bf(K$_\eta$)}.
Then,  there exists $c=c(\phi,  \eta,  L_0, L_3, d)$ such that
for any $0<r<1$, $z\in \partial D$ and any non-negative functions $f,g$ in $\R^d$ which are regular harmonic in $D\cap B(z, r)$ with respect to $Y$, and vanish in $D^c\cap B(z, r)$, we have 
\begin{align*}
\frac{f(x)}{f(y)}\le c \frac{g(x)}{g(y)}\qquad \mbox{ for any}\,\,\, x, y\in D\cap B(z,2r/3).
\end{align*}
\end{prop}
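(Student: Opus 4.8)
The plan is to follow the now-standard probabilistic route to the boundary Harnack inequality based on the Lévy system, splitting the harmonic functions $f$ and $g$ into a part coming from jumps out of a small ball near $z$ and a part coming from the process hitting a ``good'' inner ball before exiting. First I would fix $z\in\partial D$, $r\in(0,1)$, and $x,y\in D\cap B(z,2r/3)$, and by the $\kappa_1$-fatness of $D$ (which follows from the ambient $C^{1,\rho}$ assumption, but is actually needed here only for interior points, and can be replaced by the Harnack chain argument) choose an interior reference ball $B_0:=B(x_0,\kappa_1 r)\subset D\cap B(z,r)$ at a fixed fraction of the scale. Using regular harmonicity in $U:=D\cap B(z,r)$ and the Lévy system \eqref{e:levy}, write for any nonnegative $h$ that is regular harmonic in $U$ and vanishes on $D^c\cap B(z,r)$,
\begin{align*}
h(w)=\E_w\!\left[h(Y_{\tau_U}); Y_{\tau_U}\in B(z,r)\right]+\E_w\!\left[\int_0^{\tau_U}\!\!\int_{B(z,r)^c} J(Y_s,v)h(v)\,dv\,ds\right],
\end{align*}
valid for $w\in U$, where the first term accounts for the process leaving through $D\cap B(z,r)$ (which forces a jump, since $\partial(D\cap B(z,r))\cap\partial D$ gives $h=0$) — more precisely the first term should be rewritten again via the Lévy system as an integral over $D\cap B(z,r)^c$ of $J$ against $h$, using that $h=0$ on $D^c$. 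The point of condition {\bf(C)} is that for $w\in B(z,r/2)$ and $v\in B(z,r)^c$ one has $J(w,v)\asymp J(z,v)$ with a constant depending only on $\phi,d$; this replaces the ``far'' Poisson kernel by one that does not depend on $w$, so that the corresponding term factorizes as $(\text{function of }w)\times(\text{number independent of }w)$ up to constants.

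Next I would control the ``exit-from-$U$-into-$D$'' term from above and the relevant survival/hitting probabilities from below on the scale $r$. The key localized estimates are exactly those developed in the excerpt: Corollary \ref{st5} gives $\p_w(Y_{\tau_V}\in D)\le C_{\ref{st5}}\E_w[\tau_V]/\phi(\rho)$ for suitable $V$, Theorem \ref{T:exit}(1) gives $\E_w[\tau_{D\cap B(z_w,s)}]\le C_{\ref{T:exit}.1}V(s)V(\delta_D(w))$, and Lemma \ref{L:plbd1} (or the interior lower bounds behind Proposition \ref{pl:st1}) gives a lower bound on $\p_w(\tau_{B}>c\phi(\rho))$ for balls $B$ of radius comparable to $\rho$ around interior points. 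Combining these as in the usual argument, one shows there are constants $c_1,c_2>0$ depending only on $\phi,\eta,L_0,L_3,d$ such that for every $w\in D\cap B(z,2r/3)$,
\begin{align*}
c_1\,\frac{V(\delta_D(w))}{V(r)}\;\le\;\p_w\!\left(Y_{\tau_U}\in D\cap B(z,r)^c\right)\;\le\;c_2\,\frac{V(\delta_D(w))}{V(r)},
\end{align*}
and similarly that $h(w)$ is comparable, up to constants depending only on these parameters, to
\begin{align*}
\p_w\!\left(Y \text{ hits } B_0 \text{ before } \tau_U\right)\cdot h(x_0)\;+\;\frac{V(\delta_D(w))}{V(r)}\cdot\Big(\text{avg of }h\text{ over }B(z,r)^c\text{ against }J(z,\cdot)\Big),
\end{align*}
where both the hitting probability of $B_0$ and the tail average can be further bounded, via Theorem \ref{T:exit}(2), below and above by a constant times $V(\delta_D(w))/V(r)$ times $h(x_0)$ respectively a $w$-independent quantity. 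Carrying this through for both $f$ and $g$ yields $f(w)\asymp \big(V(\delta_D(w))/V(r)\big)\,\Xi_f$ and $g(w)\asymp \big(V(\delta_D(w))/V(r)\big)\,\Xi_g$ on $D\cap B(z,2r/3)$ with $\Xi_f,\Xi_g$ independent of $w$; dividing, the $V(\delta_D(\cdot))$ and $V(r)$ and $\Xi$ factors all cancel in the ratio $\frac{f(x)/f(y)}{g(x)/g(y)}$, leaving a bound by an absolute constant $c=c(\phi,\eta,L_0,L_3,d)$. Note that the Green function estimates of Theorem \ref{C:green} enter precisely to compare $h(x_0)$ with the tail average (i.e., to estimate the ``inner'' part $\E_w[\cdots]$ by a Green-potential-type quantity), which is why the proposition is stated after Theorem \ref{C:green}; conditions {\bf(L)} and {\bf(C)} are used to guarantee $C_c^2\subset$ domain of the Feller generator (hence enough regular harmonic functions and the validity of \eqref{e:harmDef}) and the scale-invariant comparability of $J$ away from the pole, respectively.

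The main obstacle I expect is the two-sided comparison of the ``interior'' contribution: showing that for $w$ near the boundary, the probability that $Y$ reaches the reference ball $B_0$ before exiting $U$ is comparable (both directions, with scale-invariant constants) to $V(\delta_D(w))/V(r)$, and that the corresponding harmonic-measure/Green-function quantities can be decoupled from the tail term uniformly in the scale $r$. The lower bound here is the delicate one — it requires pushing the process from depth $\delta_D(w)$ out to depth $\asymp r$ along the boundary (this is where Theorem \ref{T:exit}(2), the box-type exit estimate, is essential) and then using an interior Harnack chain of bounded length to connect to $B_0$, with all constants tracked to be scale-invariant. Everything else is bookkeeping with {\bf(WS)}, \eqref{e:com}, \eqref{V:sc}, and the monotonicity/subadditivity of $V$; the assumption $r<1$ keeps us in the regime where $\phi$ and $V$ obey the scaling with the fixed constants, and the passage from $B(z,2r/3)$ to the smaller ball in Theorem \ref{C:BHI} (with $\sqrt{\phi(\delta_D(\cdot))}\asymp V(\delta_D(\cdot))$) is then immediate from the displayed comparison together with Theorem \ref{C:green}.
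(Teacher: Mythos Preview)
Your approach diverges substantially from the paper's, and it contains a genuine gap.

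The paper does \emph{not} prove Proposition~\ref{prop:BHI} directly. Instead, it verifies the abstract hypotheses of the general boundary Harnack framework in \cite{MR3271268}: Assumption~A (Feller and strong Feller, the former from {\bf(L)}), Assumption~B ($C_c^2\subset D(\widehat A)$ with $\widehat A=\gener$, using {\bf(L)} to get $\gener u\in C_0$), Assumption~C (this is exactly condition~{\bf(C)}), and Assumption~D (a scale-invariant bound on $G_{B_r}$ obtained from Theorem~\ref{T:n2.1}). It then checks, via Lemma~\ref{genBound}, {\bf(J1.3)} and {\bf(WS)}, that the various constants $c_{(2.8)}$, $c_{(2.9)}$, $c_{(2.10)}$, $\widehat\varrho$ appearing in \cite{MR3271268} are controlled by quantities depending only on $\phi,\eta,L_0,L_3,d$ and the ratio of radii, so that the resulting BHI constant $c_{(1.1)}$ is uniform in $x_0$ and in the scale $r<1$. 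No exit-time estimates from Section~\ref{sec:exitY}, no $\kappa_1$-fatness, and no Green function estimates from Theorem~\ref{C:green} are used.

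Your proposal, by contrast, tries to establish the stronger factorization $h(w)\asymp \big(V(\delta_D(w))/V(r)\big)\,\Xi_h$ on $D\cap B(z,2r/3)$. This is essentially the content of Theorem~\ref{C:BHI}, not of Proposition~\ref{prop:BHI}. The proposition is stated for an \emph{arbitrary} open set $D$, with no regularity assumption whatsoever; for such $D$ there is no reason the boundary decay rate should be $V(\delta_D(\cdot))$ (think of a Lipschitz cusp, where the decay exponent changes). Concretely, your argument invokes Theorem~\ref{T:exit} and $\kappa_1$-fatness, both of which require the $C^{1,\rho}$ hypothesis that is absent here. The ``Harnack chain'' substitute you mention does not rescue this: for a general open $D$ there need not be any interior reference ball $B_0\subset D\cap B(z,r)$ at all, and even if there is one, the two-sided estimate on the hitting probability of $B_0$ in terms of $V(\delta_D(w))/V(r)$ simply fails without boundary regularity.

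In short: the paper proves the comparison-form BHI for general $D$ by quoting \cite{MR3271268} and checking scale-invariance of the input constants; only afterwards, in the proof of Theorem~\ref{C:BHI}, does it specialize to $C^{1,\rho}$ domains and use the Green function estimate of Theorem~\ref{C:green} (applied to $g_1=G_U(\cdot,z_0)$ on an auxiliary bounded $C^{1,\rho}$ set $U$) to extract the explicit $\sqrt{\phi(\delta_D(\cdot))}$ rate. Your outline conflates these two steps and therefore does not prove the proposition as stated.
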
}
\begin{proof}
{To prove the claim we use \cite{MR3271268}. We only have to check assumptions stated therein.} 
Note that {\bf (C)} is an uniform version of 
\cite[{\bf Assumption C}]{MR3271268}. Thus there is a constant $c_{(2.7)}$ in \cite{MR3271268} satisfies $c_{(2.7)}(x_0,R_1,R_2)=C^*(\phi,d,R_1/R_2)$ for any $x_0\in \Rd$ and $0<R_1<R_2\leq 2$. Let $0<r<1$ and $2/3<a \le 2$. 

We first check the bounds on  the constants $c_{(2.8)}$ and $c_{(2.9)}$ in \cite{MR3271268}. 
In our case,  the constants $c_{(2.8)}$ and $c_{(2.9)}$ in \cite{MR3271268} can be taken as 
$$
c_{(2.8)}(x_0, ar, 2r) :=\inf_{\{ar\le |x_0-y|\le 2r\}}J(x_0, y)\qquad 
\mbox{ and }\qquad 
c_{(2.9)}(x_0, r)  \le C^*\left(\int_{\Rd\setminus B(x_0, 2r)}J(x_0, y)dy\right)^{-1}
$$
where $C^*=C^*(\phi, d, 1/2)$ is the constant in  {\bf (C)}.
(see \cite[(2.8) and (2.9)]{MR3271268} and  the last display of \cite[Proposition 2.9]{MR3271268}).
Then by  {\bf(J1.3)} and  {\bf(WS)}  for $c_{(2.8)}$,  and by {\bf(J1.1)}, {\bf(J1.2)} \eqref{a:kappa} and {\bf(WS)} for  $c_{(2.9)}$,  we have that
\begin{align}\label{a:C1}
c_{(2.8)}(x_0, ar, 2r) \ge c_1 \phi(r)^{-1}r^{-d}\qquad \mbox {and }\qquad c_{(2.9)}(x_0, r) \le c_2\phi(r)
\end{align}
where the constant $c_1>0$  depends on $\phi, a$ and $d$, and the constant 
$c_2>0$  depends on $\phi, L_0$ and $d$.

We now check \cite[{\bf Assumption A--D}]{MR3271268} (and its scale and translate invariant version) holds. 
First of all, since $p(t,x,y)$ is continuous, clearly the transition operators $T_t$ of $Y$ is  strong Feller.
Recall that we assume  that $T_t$ is  Feller , that is, $T_t$ maps $C_0(\R^d)$ into $C_0(\R^d)$.
Since $Y$ is symmetric,  \cite[{\bf Assumption A}]{MR3271268} holds.

Let $\widehat{A}$ be the corresponding generator on $C_0(\R^d)$ defined as
\begin{align*}
\widehat{A}u&:=\lim_{t\to 0}\frac{T_t u-u}{t}\qquad\mbox{(strong limit)}\mbox{ and }\\
D(\widehat{A})&:=\{u\in C_0(\R^d):\widehat{A}u<\infty\}.
\end{align*}
Recall the operator 
$\gener g(x)=P.V.\int(g(y)-g(x))J(x,y)dy$
defined in \eqref{e:opgen}.
Then 
\begin{align}\label{stA=gen}
C_c^2(\R^d)\subset D(\widehat{A})\qquad  \mbox{ and} \qquad \widehat{A}u=\gener u \qquad \mbox{ for any } u\in C_c^2(\R^d).
\end{align}
Indeed,  we first obtain that
for any $u\in C_c^2(\R^d)$, $\gener u\in C_0(\R^d)$ by {\bf(L)} and so,
\begin{align}\label{limfel}
\|T_t(\gener u)-\gener u\|_{\infty}\to 0\qquad \mbox{ as } t\to 0.
\end{align}
Since, from Lemma \ref{Dynkin_formula}, $M^u_t=u(Y_t)-u(Y_0)-\int_0^
{t}  {\gener } u(Y_s) ds$ is $\p_x$-martingale with respect to the filtration of $Y$, we have that
\begin{align*}
\frac{T_t u(x)-u(x)}{t}=\frac{1}{t}\,\E_x\[\int_0^t \gener u(Y_s)ds\].
\end{align*}
Thus  we obtain that for any $u\in C_c^2(\R^d)$,
\begin{align*}
\sup_x\left|\frac{T_tu(x)-u(x)}{t}-\gener u(x)\right|&=\sup_x \left|\frac{1}{t}\int_0^t  T_s\gener u(x)-\gener u(x)ds\right|\le \frac{1}{t}\int_0^t\|T_t(\gener u)-\gener u\|_{\infty}ds,
\end{align*} 
and combining this with \eqref{limfel}, we conclude \eqref{stA=gen}.
Therefore, \cite[ {\bf Assumption B}]{MR3271268} holds with ${\cal D}=C_c^2(\R^d)$.

For $0<R_1<R_2$, let $A(x,R_1,R_2)=\{y\in \R^d: R_1 <|x-y|<R_2\}$ be the open annulus around $x$, and $\overline{A}(x,R_1,R_2)$ the closure of $A(x,R_1,R_2)$.
For every compact set $K$ and open set $U$ satisfying $K\subset U\subset\R^d$, let
$${\cal F}_{K, U}:=\{f\in C_c^2(\R^d): f\equiv 1 \mbox{ in } K,\,\, f\equiv0 \mbox{ in } U^c,\,\,\mbox{ and } 0\le f(x)\le 1\},$$
and 
$\varrho(K, U):=\inf_{f\in {\cal F}_{K, U}}\sup_x\gener f(x)$.
Then by Lemma \ref{genBound} {and {\bf(WS)}}, for $2/3<a <b\leq1$ there exist $c_3=c_3(\phi,\eta, L_0,L_3, a, b)$ such that 
 for any $x_0\in \R^d$ and $0<r<1$, \begin{align}\label{a:B}
\wh \varrho(x_0, ar,br):=\varrho(\overline{A}(x_0, ar, br), A(x_0,2r/3, 2r))
+ \varrho(\overline{B}(x_0, ar), {B}(x_0, br))
\le c_3 \phi(r)^{-1}.
\end{align}

Let $B_u:=B(x_0, u)$ be a ball centered at $x_0$ with radius $u>0$.
{Let $d\ge 1$, $0<r<1$ and $x, y\in B_r$. By Theorem \ref{T:n2.1} (with $M=2$ and $T=\phi(2)$) and the semigroup property we have, for $t_0=\phi(|x-y|)$,
\begin{align}
G_{B_r}(x,y)&\leq \int^{t_0}_0 p(s,x,y)ds+\int^\infty_0 p_{B_r}(s+t_0,x,y)ds\nonumber\\&\leq C_{ \ref{T:n2.1}}\int^{t_0}_0 s \nu(|x-y|)ds +\int_{B_r}p(t_0,z,y)G_{B_r}(x,z)dz\nonumber\\ 
&\leq C_{ \ref{T:n2.1}}\left( t_0^2 \nu(|x-y|)+[\phi^{-1}(t_0)]^{-d}\E_x\tau_{B_r}\right)\nonumber\\
&= \frac{C_{ \ref{T:n2.1}}}{|x-y|^d}\left( \phi(|x-y|)+\E_x\tau_{B_r}\right).
\end{align}
Let $5/6<a<1$. For $x\in B_{5r/6}$ and $y\in B_{r}\setminus B_{ar}$,  $(a-5/6)r\le |x-y|\le 2r$.
Hence, by \eqref{a:C1} and {\bf(WS)}  we obtain 
\begin{align}\label{a:D}
c_{(2.10)}(x_0, 5r/6, ar,r):=
\sup_{\substack
{x\in B_{5r/6}\\
y\in B_r\setminus B_{ar}}}
G_{B_{r}}(x, y)
\le 
c_4\frac{\phi(r)}{r^{d}}.
\end{align}
}
where the constant $c_4$ depends on $\phi, a$ and $d$.
Hence  \cite[{\bf Assumption D}]{MR3271268} holds.

We have observed that \cite[{\bf Assumption A}--{\bf Assumption D}]{MR3271268} hold.
In addition,  by \eqref{a:C1}, \eqref{a:B} and \eqref{a:D},  
the upper bound of the constants $c_{(3.9)}$, $c_{(3.11)}$ and $c_{(1.1)}$ in \cite{MR3271268}
from 
 the expressions of the constants $c_{(3.9)}$, $c_{(3.11)}$ and $c_{(1.1)}$ in \cite[(3.9)--(3.11)]{MR3271268} so that for any $x_0\in \Rd$ and $0<r<1$,
\begin{align*}
&
c_{(3.9)}(x_0, 5r/6, 11r/12,r)\le c_6\frac{\phi(r)}{r^d},\nn\\
&c_{(3.11)}(x_0, 5r/6,r) \le 2 c_{(3.9)}(x_0, 5r/6, 11r/12,r) \\
&\qquad \cdot  \max\left(\frac{\wh \varrho 
(x_0, 11r/12, r)
}{c_{(2.8)} (x_0, 11r/12,2r) } ,\, |B(0,1)| C^* (\phi, d, 1/2) r^d\right)\le c_{7}\phi(r),\qquad\mbox{and} \nn\\
&c_{(1.1)}(x_0,2 r/3, r) \le\left(\wh \varrho(x_0, 3r/4,5r/6) \cdot  c_{(3.11)}(x_0, 5r/6,r) +C^* (\phi, d, 9/10)\right)^4\le c_{8}
\end{align*}
where  the  constants  $c_i$, $i=6,7,8$ are depending only on $\phi, \eta, L_0,L_3$ and $d$.
Therefore, we obtain  the scaling and translation invariant version of 
\cite[(BHI)] {MR3271268} for $r < 1$, with the constant $c_{(1.1)}=c_{(1.1)} (x_0, 2r/3, r)$ which is independent of $r <1$ and 
$x_0 \in \R^d$. 
\end{proof}
Alternatively, one can check the conditions in \cite[Section 4]{2015arXiv:1510.04569}, which also provides
 \cite[Corollary 4.2]{2015arXiv:1510.04569},  
 the scaling and translation invariant version of (BHI).

{We now use the above proposition to prove Theorem \ref{C:BHI}.}

\begin{proof}[ Proof of Theorem \ref{C:BHI}.]

Suppose that $D$ is a $C^{1,1}$ open set in $\R^d$ with characteristics $( R_0, \Lambda)$. 
Since  $D$ is a $C^{1,1}$ open set, it is easy to see that for any $z\in \partial D$  there exits  a bounded $C^{1,1}$ open set $U$ in $\R^d$ whose characteristics depend only on $R_0$ and $\Lambda$ (independent of $z \in \partial D$) such that 
 $B(z, 7R_0/8) \cap D \subset U  \subset B(z, R_0) \cap D$ ({if $d=1$ we can take $U=(z,z+R_0)$ or $U=(z-R_0,z)$)}. 
{Choose  a point $z_0 \in U \setminus \overline{B(z, 3R_0/4)}$ and let
$g_1(x)=G_{U}(x, z_0)$. Since $g_1$ is regular harmonic in $D\cap B(z,3R_0/4)$,  {applying} Proposition \ref{prop:BHI} 
we obtain
\begin{align*}
\frac{f(x)}{f(y)}\le c_{1}\frac{g_1(x)}{g_1(y)},\quad x, y\in D\cap B(z, r/2).
\end{align*}
Theorem \ref{C:green} implies the claim of the theorem.} 
\end{proof}

\bibliographystyle{abbrv}
\bibliography{DhkMp}

\vskip 0.3truein

{\bf Tomasz Grzywny}

Department of Pure and Applied Mathematics, 
Wroc\l{}aw University of Science and Technology,
Wybrze\.{z}e Wyspia\'{n}skiego 27, 
50-370 Wroc\l{}aw, Poland

E-mail: \texttt{tomasz.grzywny@pwr.edu.pl}

\bigskip

{\bf Kyung-Youn Kim}

 Institute of Mathematics, Academia Sinica, 6F, Astronomy-Mathematics Building, No. 1, Sec. 4, Roosevelt Road, Taipei 10617, TAIWAN

E-mail: \texttt{kykim@gate.sinica.edu.tw}

\bigskip

{\bf Panki Kim}

Department of Mathematical Sciences and Research Institute of Mathematics,
Seoul National University,
Building 27, 1 Gwanak-ro, Gwanak-gu,
Seoul 151-747, Republic of Korea

E-mail: \texttt{pkim@snu.ac.kr}

\end{document}